\newtheorem{theorem}{Theorem}[section]
\newtheorem{example}[theorem]{Example}
\newtheorem{lemma}[theorem]{Lemma}
\newtheorem{proposition}[theorem]{Proposition}
\newenvironment{remarknf}{
  \refstepcounter{theorem}
  \noindent\textbf{Remark \thetheorem.}
}  
\def\d{\displaystyle}
\def\RR{{\mathbb{R}}}
\def\NN{{\mathbb{N}}}
\def\CC{{\mathbb{C}}}
\def\BK{{\mathcal{K}\left(\mathcal{H}\right)}}
\def\noi{\noindent}
\def\h{\mathcal{H}}
\def\k{\mathfrak{K}}
\def\B{\mathcal{B}(\h)}
\def\BI{\mathcal{B}(\mathfrak{I})}
\def\BK{\mathcal{B}(\mathfrak{K})}
\def\I{\mathfrak{I}}
\def\U{\mathcal{U}}
\def\UI{\mathcal{U}_{\mathfrak{I}}}
\def\UK{\mathcal{U}_{\mathfrak{K}}}
\def\o{\mathcal{O}}
\def\g{\mathcal{G}}
\newcommand{\PI}[2]{\left\langle #1 , #2 \right\rangle}
\def\noi{\noindent}
\def\h{\mathcal{H}}
\def\k{\mathfrak{K}}
\def\B{\mathcal{B}(\h)}
\def\BI{\mathcal{B}(\mathfrak{I})}
\def\I{\mathfrak{I}}
\def\U{\mathcal{U}}
\def\UI{\mathcal{U}_{\mathfrak{I}}}
\def\UIH{\mathcal{U}_{\mathfrak{I}}} 
\def\UIP{\mathcal{U}_{\mathfrak{I}}\left(P\right)}
\def\o{\mathcal{O}}
\def\g{\mathcal{G}}
\DeclareMathOperator{\y}{and}
\DeclareMathOperator{\rank}{rank}
\begin{document}


\title{\vspace*{0cm} Geometry of  unitary orbits of pinching operators \footnote{2010 MSC. Primary 46T05; Secondary 47B49, 47B10, 57N20, 58B20.}}
\date{}
\author{ Eduardo Chiumiento and Mar\'ia E. Di Iorio y Lucero\footnote{All authors partially supported by Instituto Argentino de Matem\'atica and CONICET.}}

\maketitle

\abstract{\footnotesize{\noindent
 Let $\I$ be a symmetrically-normed ideal of the space of bounded operators acting on a Hilbert space $\h$. Let $\{ \, p_i \, \}_1  ^w$ $(1\leq w \leq \infty)$  be   a family of mutually orthogonal projections on $\h$.  The pinching operator associated with the former family of projections is given by 
\[  P: \I \longrightarrow \I, \, \, \, \, \, \, \, \,  P(x)=\sum_{i=1}^{w} p_i x p_i. \]
Let $\UI$ denote the Banach-Lie group of the unitary operators whose difference with the identity belongs to $\I$.  We study several geometric properties of the orbit
$$\UI(P)=\left\{\, L_{u} P L_{u^*} \, :\,  u \in  \UI \, \right\},$$ 
where $L_u$ is the left  representation of $\UI$ on the algebra $\BI$ of bounded operators acting on $\I$. The results include necessary and sufficient conditions for $\UIP$ to be a submanifold of $\BI$. Special features arise in the case of  the ideal $\k$ of compact operators. In general, $\UK(P)$   turns out to be a non complemented submanifold of $\BK$. We find a necessary and sufficient condition for $\UK(P)$ to  have complemented tangent spaces in $\BK$.
 We also show that $\UIP$ is a  covering space of another  natural orbit of $P$. A quotient Finsler metric is introduced, and the induced rectifiable is studied. In addition, we give an application of the results on $\UIP$  to the topology of the $\UI$-unitary orbit of a compact normal operator.}
\footnote{{\bf Keywords and
phrases: pinching operator, left representation, symmetrically-normed ideal, submanifold, covering map, Finsler metric.}  }}


\section{Introduction}

Let $\h$ be an infinite dimensional separable Hilbert space and $\B$ the space of bounded linear operators acting on $\h$.  We denote by $\U$  the group of unitary operators on $\h$.
Let $\Phi$  be a symmetric norming function  and  $\I=\mathfrak{S}_{\Phi}$ the corresponding symmetrically-normed ideal of $\B$ equipped with the norm $\| \, . \, \|_{\I}$. Let $\UI$ denote the group of unitaries which are perturbations of the identity by an operator in $\I$, i.e.
$$ \UI =\{  \, u \in  \U \, : \, u -1   \in  \I \, \}.  $$
It is a real Banach-Lie  group with the topology defined by the metric $d(u_1, u_2)= \| u_1 -u_2 \|_{\I}$, and its Lie algebra equals  
\[  \I_{sh}=\{   \,  x \in \I  \, : \, x^*=-x  \, \}, \]
which is the real Banach space of skew-hermitian operators in $\I$ (see \cite{B}).


Let $\{ \, p_i \, \}_1  ^w$ ($1\leq w \leq \infty$) be  a family of mutually orthogonal hermitian projections in $\B$. We do  not make any assumption on the sum of all the projections of the family,  so we could have that the projection $p_0:=1-\sum_{i=1}^w p_i$ is nonzero.     
The \textit{pinching operator} associated with $\{ \, p_i \, \}_1  ^w$ is defined by
\begin{equation*}\label{operators}
 P: \I \longrightarrow \I, \, \, \, \, \, \, \, \,  P(x)=\sum_{i=1}^{w} p_i x p_i, 
\end{equation*} 
where in case $w=\infty$  the  series is convergent in the uniform norm. Let $\BI$ denote  the Banach algebra of bounded operators acting on  $\I$. Left multiplication defines the bounded linear operators $L_x: \I \longrightarrow \I$, $L_x(y)=xy$, for $x \in \B$ and $y \in \I$. The left  representation of $\UI$ on  $\BI$, namely
$\UI \longrightarrow \BI$, $u \mapsto L_u$, allows us to introduce the following orbit
$$\UI(P):=\left\{\, L_{u} P L_{u^*} \, :\,  u \in  \UI \, \right\}.$$ 
The aim of this paper is to study  geometric properties of this  orbit.  Since every pinching operator is a continuous projection,  the present work might be regarded as a modest contribution to the vast literature on the differential and metric geometry of unitary orbits of projections in different settings (see e.g. \cite{AL8, AS, BR07b, cpr, cpr0, Up}).
Despite of some usual geometric properties that have already been studied in the afore-mentioned papers and still hold in this special orbit, we will also show some new special features of $\UIP$, especially concerning with its submanifold structure.


Pinching operators generalize the so-called notion of pinching of block  matrices developed in matrix analysis (see e.g. \cite{Bh}). In the framework of symmetrically-normed ideals, these operators have been studied in \cite{GK,S}.
If $\I$ is the trace class ideal,  pinching operators arise in quantum mechanics due to a well-known postulate of von Neumann on the measurement of density operators \cite{vN}. More recently, they have been shown to be examples of the quantum reduction maps introduced in \cite{RO}.

Let us describe the contents of the paper. 

In Section 2 we recall some basic facts on symmetrically-normed ideals, pinching operators and submanifolds of Banach manifolds.
Section 3 is devoted to the study of the differential structure of $\UIP$. For any symmetrically-normed ideal $\I$ different from the compact operators, we describe in Theorem \ref{complemento} several equivalent conditions to $\UI(P)$ be a submanifold of $\BI$. For the ideal $\k$ of compact operators many of these conditions are no longer equivalent. In fact, $\UK(P)$ is always a quasi submanifold of $\BK$, which rarely has complemented tangent spaces in $\BK$ (see Theorem \ref{compact quasi}).

In Section 4 we go further into the topological structure  of $\UIP$. We show that $\UIP$ is a covering space of another   natural orbit of $P$. The methods of this section use those of \cite{AS}, where a similar situation arises in relation  with the unitary orbit of a conditional expectation in von Neumann algebras.

The Section 5 is concerned with the metric structure of $\UIP$. Motivated by similar results on other homogeneous spaces \cite{AL9,C}  we study the rectifiable distance induced by quotient Finsler metric on $\UIP$. Under the assumption that the quotient topology on $\UI(P)$ coincide with the inherited topology from $\BI$, we prove that the rectifiable distance defines these topologies.  As a by-product we find that  $\UIP$ is complete with the rectifiable distance.

In Section 6 we study the topology of $\UI$-unitary orbits of  a compact normal operator. These type of unitary orbits  may be endowed with the quotient topology, though there is another quite natural topology, the one defined by the norm of the ideal $\I$. We show that both topologies coincide if and only if the compact operator  has finite rank. The proof makes use of the previous results on the topology of $\UIP$. 
This result is related with several works \cite{AL9, ALR,  BR, Bo}, where under different assumptions, the finite rank condition appears as sufficient to the statement on the topologies.

\section{Preliminaries}\label{intro}

\noi \textbf{Symmetrically-normed ideals.}
We begin  with some basics facts on symmetrically-normed ideals. For a deeper discussion of this subject we refer the reader to \cite{GK} or \cite{S}.

Let $\h$ be a Hilbert space.  No confusion will arise if $\| \, . \, \|$  denotes the norm of vectors in $\h$ and the uniform norm in $\B$. For $\xi, \eta \in \h$, let  $\xi \otimes \eta$ be the rank one operator defined by $(\xi \otimes \eta)(\zeta)=\PI{\zeta}{\eta}\xi$, for $\zeta \in \h$.   
  By a \textit{symmetrically-normed ideal} we mean a two-sided ideal $\I$ of $\B$ endowed with a norm $\| \, . \, \|_{\I}$ satisfying 
\begin{itemize}
\item $(\I, \| \, . \, \|_{\I})$ is a Banach space.
\item $\|xyz\|_{\I}\leq \|x\| \|y\|_{\I} \| z\|$, for  $x,z \in \B$ and $y \in \I$.
\item $\| \xi \otimes \eta \|_{\I}= \| \xi \| \, \| \eta \|$, for $\xi , \eta \in \h$.  
\end{itemize}
A result that goes back to J. Calkin (\cite{Cal}) states the inclusions $\mathfrak{F} \subseteq \I \subseteq \k$, where $\mathfrak{F}$ is the set of all the finite rank operators, $\I$ is a two-sided ideal of $\B$ and $\k$ the ideal of compact operators on $\h$. 
   
Symmetrically-normed ideals are closely related to the following class of norms. Let $\hat{c}$ be the real vector space consisting of all sequences with a finite number of nonzero terms.  A \textit{symmetric norming function} is a norm $\Phi: \hat{c} \to \RR$ satisfying the following properties: 
\begin{itemize}
\item $\Phi(1,0,0,\ldots)=1$.
\item $\Phi(a_1, a_2 , \ldots , a_n, 0, 0 , \ldots)=\Phi(|a_{j_1}|, |a_{j_2}| , \ldots , |a_{j_n}|, 0, 0 , \ldots)$, where  $j_1, \ldots, j_n$  is any permutation of the integers $1,2,\ldots, n$ and $n\geq 1$.  
\end{itemize}
Any symmetric norming function $\Phi$ gives rise to two symmetrically-normed ideals. Indeed, for any  compact operator $x$ one may consider the sequence $(s_n(x))_{n}$ of its singular values arranged in  non-increasing order, and thus define 
\[  \| x \|_{\Phi}:= \sup_{k \geq 1} \Phi (s_1(x), s_2(x), \ldots, s_k(x), 0, 0 , \ldots) \in [0,\infty].   \] 
 It turns out that 
\[  \mathfrak{S}_{\Phi}:= \{  \,   x \in \k    \, : \, \|x \|_{\Phi} < \infty   \, \} \]
and the $\|\, . \ \|_{\Phi}$-closure in $\mathfrak{S}_{\Phi}$  of the  finite rank operators, that is
$$  \mathfrak{S}_{\Phi}^{(0)}:= \overline{\mathfrak{F}\, \,  }^{\| \, . \, \|_{\Phi}} , $$
are symmetrically-normed ideals. It is not difficult to show that  $\mathfrak{S}_{\Phi}^{(0)}=\mathfrak{S}_{\Phi}$ if and only if $\mathfrak{S}_{\Phi}$ is separable. Moreover, any separable symmetrically-normed ideal coincides with  some $\mathfrak{S}_{\Phi}^{(0)}$ (see \cite[p. 89]{GK}).


\medskip

\noi \textbf{Pinching operators.}  
Let $\Phi$ a symmetric norming function, and $\I=\mathfrak{S}_{\Phi}$.   Recall  that given a family $\{ \, p_i \, \}_1 ^w$ ($1 \leq w \leq \infty$) of mutually orthogonal hermitian projections, i.e. 
\[ p_i=p_i^*, \, \, \, \, \, \, \, \, \, \, \, p_ip_j=\delta_{ij},    \]
we define the pinching operator associated with the family by 
$$P: \I \longrightarrow \I, \, \, \, \, \, \, \, \,  P(x)=\sum_{i=1}^{w} p_i x p_i, $$ 
 Notice that we might have $w=\infty$.  Since $x $ is compact,  the series,  which at first converges in the strong operator topology, turns out to be convergent in the uniform norm. It is also  noteworthy that $P$ is well defined in the sense that  $P(x) \in \I$ whenever $x\in \I$ (see \cite[p. 82]{GK}). 




Bellow we need to consider   the Banach algebra $\BI$  of all bounded operators on  $\I$   with the usual operator norm: for $X \in \BI$, 
\[  \|  X \|_{\BI}=\sup_{\|y\|_{\I}=1} \| X (y) \|_{\I}.    \]
We collect  some basic properties of pinching operators in the next proposition. 
 
\begin{proposition}
Let $\Phi$ a symmetric norming function, and $\I=\mathfrak{S}_{\Phi}$. Let $P$ be the pinching operator associated with a family $\{ \, p_i \,\}_{1}^w$. The following assertions hold:
\begin{enumerate}
\item[i)] $P^2=P$. 
\item[ii)] $P$ is a module map over its range.
\item[iii)] $P(x)^*=P(x^*)$.
\item[iv)] $P$ is continuous. In fact, $\| P \|_{\BI}=1$.
\end{enumerate}
\end{proposition}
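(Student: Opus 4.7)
The plan is to verify each assertion in turn, relying throughout on the orthogonality relation $p_i p_j = \delta_{ij}\, p_i$.

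For (i), I would directly expand $P^2(x) = \sum_{i}p_i\bigl(\sum_j p_j x p_j\bigr)p_i$ and collapse the double sum via $p_i p_j p_i = \delta_{ij}p_i$, leaving $\sum_i p_i x p_i = P(x)$. For (iii), the same orthogonality together with $p_i^*=p_i$ gives $P(x)^* = \sum_i (p_i x p_i)^* = \sum_i p_i x^* p_i = P(x^*)$; when $w=\infty$ one commutes the adjoint with the series using that $*$ is a uniform-norm isometry and that the defining series converges in uniform norm (as recalled in the preliminaries).

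For (ii), I would first characterize the range of $P$: since $P$ is idempotent by (i), $y\in\rg(P)$ iff $P(y)=y$, i.e.\ $y=\sum_i p_i y p_i$. This forces $p_i y=yp_i=p_iyp_i$ for every $i\ge 1$ and $p_0 y=yp_0=0$. With these commutation relations in hand,
\[
 P(xy) \;=\; \sum_i p_i x (y p_i) \;=\; \sum_i p_i x p_i y p_i \;=\; \Bigl(\sum_i p_i x p_i\Bigr)\,y \;=\; P(x)\,y,
\]
and symmetrically $P(yx)=yP(x)$, which is precisely the module property over $\rg(P)$.

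Part (iv) is the only step requiring real work; the norm estimate $\|P\|_{\BI}\le 1$ is the main obstacle. For two complementary projections $p$ and $q=1-p$ I would use the well-known identity $pxp+qxq=\tfrac12(x+vxv)$ with $v=p-q$, observing that $v$ is a self-adjoint unitary; unitary invariance of $\|\cdot\|_\I$ and the triangle inequality then give a contraction. To handle an arbitrary family, I would consider the enlarged system $\{p_0,p_1,\ldots\}$ (whose sum equals $1$) and the associated complete pinching $Q(x)=p_0xp_0+\sum_{i\ge 1}p_ixp_i$; iterating the two-projection case for finite $w$ and passing to the limit for $w=\infty$ (as carried out in \cite[p.~82]{GK}) shows $\|Q\|_{\BI}\le 1$. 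Since $P(x)=(1-p_0)Q(x)(1-p_0)$, the ideal inequality $\|axb\|_\I\le\|a\|\,\|x\|_\I\,\|b\|$ applied with $a=b=1-p_0$ then yields $\|P(x)\|_\I\le\|x\|_\I$. For the reverse bound I would test on $x=\xi\otimes\xi$ with $\xi$ a unit vector in the range of some $p_i$, $i\ge 1$ (such $i$ exists unless the family is trivial, in which case $P=0$): then $P(x)=x$ and $\|x\|_\I=\|\xi\|^2=1$, giving $\|P\|_{\BI}=1$.
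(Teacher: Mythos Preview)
Your proof is correct and in fact more detailed than the paper's own, which simply declares (i)--(iii) trivial and refers the reader to \cite[p.~82]{GK} for (iv). Your averaging argument via $v=p-q$ for the two-projection case, followed by iteration and the compression $P(x)=(1-p_0)Q(x)(1-p_0)$, is exactly the route taken in that reference, so the approaches coincide.
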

\begin{proof}
The proofs of $i)-iii)$ are trivial. For a proof of $iv)$ we refer the reader to \cite[p. 82]{GK}.
\end{proof}


\medskip


\noi \textbf{Submanifolds.}
In the paper we will use  different notions of submanifold of a (Banach) manifold. Since the terminology is not uniform in the literature, we need to mention that we follow Bourbaki \cite{Bour}.  To be precise, let $M$ be a  manifold and $N$ a topological space contained in $M$. Recall that a subspace $F$ of a Banach space $E$ is said to be complemented if $F$ is closed and there exists a closed subspace $F_1$ such that $F\oplus F_1=E$. We will use the following definitions: 
\begin{itemize}
	\item $N$ is a \textit{submanifold} of $M$ if for each point $x \in N$ there exists a Banach space $E$ and a chart $(\mathcal{W},\phi)$ at $x$, $\phi:\mathcal{W}\subseteq M \longrightarrow E$, such that $\phi(\mathcal{W}\cap N)$ is a neighborhood of $0$ in a complemented  subspace of $E$. 
	\item $N$ is a \textit{quasi submanifold} of $M$ if for each point $x \in N$ there exists a Banach space $E$ and a chart $(\mathcal{W},\phi)$ at $x$, $\phi:\mathcal{W}\subseteq M \longrightarrow E$, such that $\phi(\mathcal{W}\cap N)$ is a neighborhood of $0$ in a closed subspace of $E$.
\end{itemize}

The following criterion will be useful (see \cite{Bour}). 

\begin{proposition}\label{sous varietes}
Let $M$ be a manifold, $N$ be a topological space and $N\subseteq M$. Then $N$ is a submanifold (resp. quasi submanifold) of $M$   if and only if the topology of $N$ coincides with the  topology  inherited from $M$ and the differential map of  the inclusion map $N \hookrightarrow M$ has complemented range (resp. closed range) at every $x \in N$.  
\end{proposition}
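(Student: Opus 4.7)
The plan is to prove both equivalences of Proposition \ref{sous varietes} in parallel, with the submanifold/quasi submanifold distinction reduced to whether one can split off a complementary subspace or must work directly with the image. The direction $(\Rightarrow)$ will be immediate from the definitions: given a chart $(\mathcal{W},\phi)$ of $M$ at $x\in N$ adapted to $N$, the restriction $\phi|_{\mathcal{W}\cap N}$ is itself a chart for $N$, forcing the topology of $N$ to coincide with the one inherited from $M$; in these coordinates the inclusion is the linear embedding of a complemented (resp.\ closed) subspace $F\subseteq E$ into $E$, whose differential has the desired range $F$.

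The content lies in the converse. Fix $x\in N$, choose charts $\varphi\colon\mathcal{V}\subseteq M\to E$ and $\psi\colon\mathcal{U}\subseteq N\to T$ vanishing at $x$, and after shrinking arrange $\mathcal{U}=\mathcal{V}\cap N$ using the equality of topologies. The local representative of the inclusion is $h=\varphi\circ\psi^{-1}\colon\psi(\mathcal{U})\to E$, and by hypothesis its derivative $D:=dh(0)\colon T\to E$ is injective with closed (resp.\ complemented) image $G\subseteq E$. The Banach open mapping theorem then promotes $D$ to a topological isomorphism $T\to G$.

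In the \emph{quasi submanifold case} I would simply view $h$ as a smooth map into the Banach space $G$ and apply the inverse function theorem: after shrinking, $h$ is a diffeomorphism onto an open neighborhood of $0$ in $G$, so $(\mathcal{V},\varphi)$ itself is the required chart, since $\varphi(\mathcal{V}\cap N)=h(\psi(\mathcal{U}))$ is an open neighborhood of $0$ in the closed subspace $G\subseteq E$. In the \emph{submanifold case}, writing $E=G\oplus G_1$ with $G_1$ closed, I would instead define on a neighborhood of the origin in $T\oplus G_1$ the map
\[
\Phi(t,v)=h(t)+v,
\]
whose differential $(s,w)\mapsto Ds+w$ at the origin is a topological isomorphism $T\oplus G_1\to G\oplus G_1=E$. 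The inverse function theorem makes $\Phi$ a local diffeomorphism, and setting $\phi:=\Phi^{-1}\circ\varphi$ on $\mathcal{W}:=\varphi^{-1}(\mathrm{Im}\,\Phi)$ produces a chart of $M$ at $x$ that straightens $N$: every point in $\mathcal{W}\cap N$ corresponds under $\varphi$ to $h(t)=\Phi(t,0)$, so $\phi(\mathcal{W}\cap N)=\psi(\mathcal{U})\times\{0\}$, an open neighborhood of $0$ in the complemented subspace $T\times\{0\}$.

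The step I expect to require the most bookkeeping is not the inverse function theorem itself but the reconciliation between the coordinate domain $\psi(\mathcal{U})$ on $N$ and the subset $\mathcal{V}\cap N$ seen inside $M$. Here the coincidence of topologies is indispensable: without it, even once $h$ is known to be a local diffeomorphism onto its image, one could not conclude that the image of $N$ in the final chart is actually an open subset of the closed (or complemented) subspace, rather than a smaller parametrized piece of it. The range hypothesis on $D$ provides the linear structure that makes the inverse function theorem applicable; the topological hypothesis ensures that the straightening chart describes $N$ inside $M$ and not merely inside its own intrinsic coordinate picture.
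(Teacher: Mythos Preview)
The paper does not actually prove this proposition; it simply records the criterion and refers to Bourbaki. So there is no argument in the paper to compare yours against directly.

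Your forward direction and the submanifold half of the converse are correct and standard: the straightening map $\Phi(t,v)=h(t)+v$ on $T\times G_1$ is exactly the classical construction, and the bookkeeping you flag (matching $\mathcal W\cap N$ with the coordinate domain on $N$) is routine once both topologies agree.

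The quasi submanifold half of the converse, however, has a genuine gap. You write that you ``would simply view $h$ as a smooth map into the Banach space $G$ and apply the inverse function theorem'', concluding that $(\mathcal V,\varphi)$ is already an adapted chart with $\varphi(\mathcal V\cap N)=h(\psi(\mathcal U))$ open in $G$. But $h=\varphi\circ\psi^{-1}$ takes values in $E$, and only its \emph{differential} at $0$ has image $G$; nothing forces $h(\psi(\mathcal U))\subseteq G$. Already for the parabola $N=\{(t,t^2)\}\subseteq M=\RR^2$ with $\varphi=\mathrm{id}$, the tangent line is $G=\RR\times\{0\}$, yet $\varphi(\mathcal V\cap N)$ is the parabola itself, which does not lie in $G$. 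So you cannot apply the inverse function theorem to $h$ as a map into $G$, and the original chart $\varphi$ does not straighten $N$. In the submanifold case you repair this via $\Phi(t,v)=h(t)+v$, but that construction needs a closed complement $G_1$, which is precisely what is unavailable here. Producing a chart of $M$ that flattens $h(\psi(\mathcal U))$ into a closed, possibly non-complemented subspace requires a different argument; this is the substantive content of the quasi-submanifold criterion in Bourbaki, and your sketch does not address it.
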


\section{Differential structure of $\UI(P)$}

\noi  Throughout this section, let $\Phi$ a symmetric norming function, and $\I=\mathfrak{S}_{\Phi}$.  Let  $P$ be the pinching operator associated with a family of mutually orthogonal projections $\{ \, p_i \,\}_{1}^w$ $(1\leq w \leq \infty)$. 
We first show that $\UI(P)$  has a smooth  manifold structure endowed with the quotient topology.

\begin{lemma}\label{calc isotropia}
Let $x \in \B$. Then $L_xP=PL_x$ if and only if $x=\sum_{i=0}^{w}p_i x p_i$. 
\end{lemma}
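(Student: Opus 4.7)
My plan is to unpack the commutation $L_x P = P L_x$ as the pointwise identity $x P(y) = P(xy)$ for every $y \in \I$, which reads
\[
\sum_{i=1}^w x\, p_i y p_i \;=\; \sum_{i=1}^w p_i x y p_i.
\]
Setting $p_0 := 1 - \sum_{i=1}^w p_i$, the family $\{p_i\}_{i=0}^w$ is pairwise orthogonal and sums strongly to the identity. With this notation, the condition $x = \sum_{i=0}^w p_i x p_i$ is equivalent to $p_j x p_i = 0$ for $i \neq j$, and hence to $x$ commuting with every $p_i$, $i\ge 0$. This reformulation is the one I would aim at.

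The reverse implication is then immediate: if $x$ commutes with each $p_i$ (in particular with each $p_i$, $i\ge 1$), then $x p_i y p_i = p_i x y p_i$ term by term, and summing gives $x P(y) = P(xy)$.

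For the forward implication, I would test the identity on rank one operators. Fix $k \in \{1,\ldots,w\}$ with $p_k \ne 0$ (the case $p_k=0$ being vacuous), pick $\eta \in \h$ with $p_k\eta \ne 0$ and $\xi \in \h$ arbitrary, and take $y = \xi \otimes p_k\eta \in \mathfrak{F} \subseteq \I$. Using the identities $p_i(a\otimes b)=(p_ia)\otimes b$, $(a\otimes b)p_i = a\otimes(p_ib)$, and $p_ip_k = \delta_{ik}p_k$, a direct computation gives
\[
P(y) = p_k\xi \otimes p_k\eta, \qquad P(xy) = (p_k x\xi) \otimes p_k\eta,
\]
so that $x P(y) = P(xy)$ reduces to $(xp_k\xi)\otimes p_k\eta = (p_kx\xi)\otimes p_k\eta$. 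Since $p_k\eta \ne 0$ and $\xi$ is arbitrary, this forces $xp_k = p_kx$. Running this for every $k\ge 1$ shows that $x$ commutes with each $p_k$, and hence also with $p_0 = 1 - \sum_{k\ge 1} p_k$.

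To finish, I would translate ``$x$ commutes with each $p_i$'' back into the stated form: $p_j x p_i = \delta_{ij}\, p_i x p_i$, and for any $\zeta \in \h$, using that $\sum_{i=0}^w p_i\zeta = \zeta$ in norm (hence in particular for the vector $x\zeta$), one gets $x\zeta = \sum_{i=0}^w p_i x p_i \zeta$ with strong convergence when $w=\infty$, i.e. $x = \sum_{i=0}^w p_i x p_i$. There is no genuine obstacle here; the only points that need care are choosing a test operator $y$ that both lies in $\I$ and isolates a single index $k$, and bookkeeping the auxiliary projection $p_0$ together with strong versus uniform convergence in the infinite case.
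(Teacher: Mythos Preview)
Your argument is correct. Both you and the paper prove the forward implication by evaluating the identity $xP(y)=P(xy)$ on well-chosen test operators in $\I$, but the choices differ. The paper fixes $i\ge 1$, takes finite-rank projections $e_n\le p_i$ with $e_n\nearrow p_i$, and plugs in $y=e_n$ to obtain $p_ixe_n=xe_n$, hence $xp_i=p_ixp_i$; a separate argument (using $y$ of the form $e_nx^*$ with $e_n\le p_0$ and positivity of the resulting sum) is then needed to deal with the blocks $p_jxp_0$, $j\ge 1$. Your test with the rank-one operator $y=\xi\otimes p_k\eta$ yields the full commutation $xp_k=p_kx$ in one stroke, for every $k\ge 1$ and every $\xi$, so the $i=0$ case follows immediately from $p_0=1-\sum_{k\ge 1}p_k$ without any extra work. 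The payoff of your route is a shorter, uniform treatment that avoids the approximation by increasing projections and the separate positivity argument; the paper's route, on the other hand, stays closer to the block-matrix viewpoint used later in the article.
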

\begin{proof}
Suppose that  $L_xP=PL_x$, which actually means that 
\[  \sum_{i=1}^{w} (p_i x - xp_i)yp_i=0, \]
for all $y \in \I$. Let  $i \geq 0$ and $(e_n)_n$ be a sequence of finite rank projections such that $e_n \leq p_i$ and $e_n \nearrow p_i$ in the strong operator topology. We first assume that $i\geq 1$. Replacing $y$ by $e_n$, we get $p_ixe_n=xe_n$ for all $n \geq 1$. This gives $p_ixp_i=xp_i$ for all $i\geq 1$. Thus $p_jxp_i=0$ for all $i \geq 1$ and $j \geq 0$. 
In the case in which $i=0$ we replace $y$ by $p_0x^*$. Then we see that $p_ixp_0=0$ for all $i \geq 1$, and thus $x$ must be block diagonal. The proof of the converse assertion is trivial.   
\end{proof}


\begin{proposition}\label{est homog}
Let $\Phi$ a symmetric norming function, and $\I=\mathfrak{S}_{\Phi}$. Then  $\UI(P)$ is a real analytic homogeneous space of $\UI$.
\end{proposition}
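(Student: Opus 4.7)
The plan is to realize $\UI(P)$ as a quotient homogeneous space $\UI / G_P$, where $G_P$ is the isotropy subgroup at $P$. By the general theory of Banach-Lie groups, this reduces to showing that $G_P$ is a Banach-Lie subgroup of $\UI$; equivalently, that its candidate Lie algebra is closed and topologically complemented in $\I_{sh}$. Once this is established, the quotient $\UI / G_P$ inherits a unique real analytic Banach manifold structure, and the natural $\UI$-equivariant bijection transports that structure to the orbit.

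First I would identify the isotropy. Since the action is $u \cdot P = L_u P L_{u^*}$, the condition $u \cdot P = P$ is equivalent to $L_u P = P L_u$, and Lemma \ref{calc isotropia} then yields
\[
G_P = \Bigl\{\, u \in \UI : u = \sum_{i=0}^{w} p_i u p_i \,\Bigr\},
\]
the block diagonal unitaries with respect to the extended family $\{p_i\}_{i=0}^w$, where $p_0 = 1 - \sum_{i=1}^w p_i$. Differentiating the defining relation gives the natural candidate Lie algebra
\[
\mathfrak{g}_P = \Bigl\{\, x \in \I_{sh} : x = \sum_{i=0}^{w} p_i x p_i \,\Bigr\},
\]
which is evidently closed.

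The key step is to exhibit a topological complement of $\mathfrak{g}_P$ in $\I_{sh}$. For this I would apply the pinching construction to the enlarged family $\{p_i\}_{i=0}^{w}$, obtaining the operator
\[
\tilde{P}(x) = P(x) + p_0 x p_0.
\]
The family $\{p_i\}_{i=0}^{w}$ is again mutually orthogonal, so the basic properties of pinching operators established in Section 2 apply to it: $\tilde{P}$ is a continuous, $*$-preserving idempotent on $\I$ with $\|\tilde{P}\|_{\BI} = 1$. Consequently $\tilde{P}$ restricts to a continuous $\RR$-linear projection on $\I_{sh}$ whose range is precisely $\mathfrak{g}_P$, giving the topological direct sum decomposition
\[
\I_{sh} = \mathfrak{g}_P \oplus \ker\bigl(\tilde{P}|_{\I_{sh}}\bigr).
\]

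With $\mathfrak{g}_P$ closed and complemented in $\I_{sh}$, a standard application of the Banach-Lie subgroup theorem (via the exponential map of $\UI$) shows that $G_P$ is a real analytic Banach-Lie subgroup of $\UI$ with Lie algebra $\mathfrak{g}_P$, and that $\UI / G_P$ carries a unique real analytic Banach manifold structure for which the quotient map is a real analytic principal $G_P$-bundle. Since the action $u \mapsto L_u P L_{u^*}$ on $\BI$ is polynomial, hence real analytic, it descends to a $\UI$-equivariant real analytic bijection $\UI / G_P \to \UI(P)$, and endowing $\UI(P)$ with the push-forward structure realizes it as a real analytic homogeneous space of $\UI$ equipped with the quotient topology. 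The one technical point I expect to require care is the verification that $\tilde{P}$ is genuinely well-defined and bounded on $\I$ in the case $w = \infty$ when $p_0 \neq 0$; this is not immediate, but it follows directly from the general norm-one estimate for pinchings recalled in Section 2.
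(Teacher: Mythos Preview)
Your proposal is correct and follows essentially the same route as the paper: identify the isotropy via Lemma \ref{calc isotropia}, observe that its Lie algebra consists of the block-diagonal skew-hermitian operators, and complement it using the extended pinching $\tilde{P}$ (the paper simply writes down $\mathcal{M}=\ker\tilde{P}|_{\I_{sh}}$ directly without naming $\tilde{P}$). The one step the paper makes more explicit than your outline is the verification, via the logarithm series, that $\exp_{\UI}(\mathfrak{g}_P\cap V)=G_P\cap\exp_{\UI}(V)$ for small $V$, which is the hypothesis needed before invoking \cite[Theorem 8.19]{Up}.
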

\begin{proof}
Note that the isotropy group at $P$ of the natural underlying action of $\UI$ is 
\[ G=\{ \, u \in \UI \, : \, L_u P = P L_u \, \}.  \]
It is a closed subgroup of $\UI$. Its Lie algebra can be identified with
\[ \g=\{ \, z \in \I_{sh} \, : \, L_zP=PL_z \, \}.   \]
We will prove that $G$ is a Banach-Lie subgroup of $\UI$. Let $u=e^z \in G$, with $z \in \I_{sh}$ and $\|z\|_{\I}< \pi$. By the condition on the norm of $z$,  we have $z=log(u)=\sum_{n=0}^{\infty} \frac{(-1)^n}{(n+1)}(u-1)^{n+1}$. Notice that $L_uP=PL_u$, or $L_{u-1}P=PL_{u-1}$, clearly implies $L_{r(u-1)}P=PL_{r(u-1)}$ for any polynomial $r \in \RR[X]$, and by continuity we have $L_zP=PL_z$. Denote by $\exp_{\UI}: \I_{sh} \longrightarrow \UI$, $\exp_{\UI} (z)=e^z$ the exponential map of the Banach-Lie group $\UI$. Hence we have proved that $\exp_{\UI}( \g \cap V)=G \cap \exp_{\UI} (V)$, for any sufficiently small neighborhood $V$ of the origin in $\I_{sh}$.   

On the other hand, by  Lemma \ref{calc isotropia} we can rewrite the Lie algebra as
\[  \g=\bigg\{  \, \sum_{i=0}^{w}p_izp_i  \, : \,     z \in \I_{sh}          \, \bigg\}, \]
which is a real closed subspace of $\I_{sh}$. Moreover, the following  subspace
\[ \mathcal{M}=\{ \, z \in \I_{sh} \, : \, p_izp_i=0, \, \forall \, i \geq 0  \,   \} =\bigg\{  \, \sum_{i\neq j}p_izp_j  \, : \, z \in \I_{sh} \, \bigg\}  \]
is a closed supplement for $\g$ in $\I_{sh}$. Then, $G$ is a Banach-Lie subgroup of $\UI$, and by   \cite[Theorem 8.19]{Up} we conclude that $\UI(P)$ is a real analytic homogeneous space of $\UI$.       
\end{proof}


\subsection{When  is  $\UI(P)$ a submanifold of $\BI$? The case $\I \neq \k$ }

\noi  In this section, we discuss the submanifold structure of $\UIP$  under the assumption that $\I \neq \k$. 
 Recall that given the pinching operator $P$ associated with a  family of mutually orthogonal projections $\{ \, p_i \, \}_1 ^w$ $(1\leq w \leq \infty)$, we may   consider the larger family   $\{ \, p_i \, \}_0 ^w$, where $p_0=1-\sum_{i=1}^w p_i$.  However, the pinching operator $P$ is always associated with the first family  $\{ \, p_i \, \}_1 ^w$. The following estimate will be  useful.

\begin{lemma}\label{desigualdad} 
Let $\Phi$ a symmetric norming function, and $\I=\mathfrak{S}_{\Phi}$.  Then
$$\| L_x P -PL_x \|_{\BI} \geq \| p_i xp_j\|,$$ 
for  $x \in \I$, $i\geq 1$, $j\geq 0$ and $i \neq j$. 	 
\end{lemma}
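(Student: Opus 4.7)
My plan is to test the operator $L_xP-PL_x$ against suitably chosen rank one operators in $\I$ whose norm in $\I$ is known exactly, and read off the claimed inequality from the size of the image.

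First I would note that the defining axiom $\|\xi\otimes\eta\|_{\I}=\|\xi\|\,\|\eta\|$ of a symmetrically-normed ideal makes rank one operators the natural test elements. If either $p_i$ or $p_j$ is zero the inequality is trivial, so I may assume $p_ip_i^*\neq 0\neq p_jp_j^*$ and pick unit vectors $\xi\in\rg(p_j)$ and $\eta\in\rg(p_i)$. Set $y=\xi\otimes\eta$; then $\|y\|_{\I}=1$. Since $\eta=p_i\eta$ and the $p_k$ are mutually orthogonal, a direct check gives $yp_k=\delta_{ki}\,y$, so
\[
P(y)=\sum_{k=1}^{w}p_kyp_k=p_iyp_i=p_i(\xi\otimes\eta)p_i.
\]
Because $i\geq 1$, $j\geq 0$ and $i\neq j$ we have $p_i\xi=p_ip_j\xi=0$ (in both cases $j\geq 1$ and $j=0$), hence $P(y)=0$ and therefore $L_xP(y)=0$.

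Next I would compute $PL_x(y)=P(xy)$. Writing $xy=(x\xi)\otimes\eta$ and repeating the previous computation with $x\xi$ in place of $\xi$,
\[
PL_x(y)=\sum_{k=1}^{w}p_k((x\xi)\otimes\eta)p_k=(p_ix\xi)\otimes\eta.
\]
Consequently $(L_xP-PL_x)(y)=-(p_ix\xi)\otimes\eta$, whose $\I$-norm equals $\|p_ix\xi\|\,\|\eta\|=\|p_ix\xi\|=\|p_ixp_j\xi\|$ (using $p_j\xi=\xi$). Therefore
\[
\|L_xP-PL_x\|_{\BI}\geq \|p_ixp_j\xi\|
\]
for every unit vector $\xi\in\rg(p_j)$. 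Taking the supremum over such $\xi$ gives $\|L_xP-PL_x\|_{\BI}\geq\|p_ixp_j\|$, as claimed.

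The argument is essentially a one-line computation once the correct test element has been identified; the only mild subtlety is that the estimate has to cover $j=0$ as well as $j\geq 1$, and I expect the main thing to get right is choosing $\xi\in\rg(p_j)$ and $\eta\in\rg(p_i)$ simultaneously so that $P$ kills $y$ on one side and reproduces a block of $x$ on the other. No separate treatment of $j=0$ is then needed, since $p_ip_0=0$ for $i\geq 1$ plays the same role as $p_ip_j=0$ for $i,j\geq 1$ distinct.
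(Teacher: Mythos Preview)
Your proof is correct and follows essentially the same approach as the paper's: both test $L_xP-PL_x$ on a rank one operator $\xi\otimes\eta$ with $\xi\in R(p_j)$, $\eta\in R(p_i)$, and use the axiom $\|\xi\otimes\eta\|_{\I}=\|\xi\|\,\|\eta\|$ to read off $\|p_ixp_j\xi\|$. The only cosmetic difference is that the paper invokes the Schmidt expansion (compactness) to pick a single $\xi$ achieving $\|p_ixp_j\xi\|=\|p_ixp_j\|$, whereas you take the supremum over unit vectors $\xi\in R(p_j)$ at the end; both are valid and equivalent here.
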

\begin{proof}
  Consider  the Schmidt expansion of the compact operator $p_i x p_j$, namely
\[ p_ixp_j = \sum_{k=1}^{\infty} s_k \, \xi_k \otimes \eta_k , \]
where $s_k$ are the singular values of $p_i x p_j$ and $(\xi_k)_k$, $(\eta_k)_k$ are orthogonal systems of vectors. In particular, there is a vector $\xi \in R(p_j)$, $\|\xi \|=1$, such that $p_ixp_j\xi=\| p_i x p_j\| \xi$. Pick  any $\eta \in R(p_i)$ such that $\|\eta\|=1$. Then  note that
\[  (L_xP - PL_x )(\xi \otimes \eta)=-p_i x (\xi \otimes \eta) p_i = - p_i x p_j (\xi \otimes \eta) = - \| p_i x p_j \| \, \xi \otimes \eta.  \] 
We thus  get
\[
\| L_x P -PL_x \|_{\BI} \geq \| (L_x P -PL_x )(\xi \otimes \eta)\|_{\I} = \| p_i x p_j \|.
\]\end{proof}

\noi The first obstruction for $\UI(P)$ to be a submanifold of $\BI$ lies in the fact that its tangent spaces may not be closed. The tangent space of $\UI(P)$ at $Q$ (i.e. the derivatives at $Q$ of smooth curves inside $\UI(P)$) is apparently given by
\[  (T\UI(P))_Q= \{ \,   L_zQ-QL_z \, : \, z \in \I_{sh} \, \}. \]
We denote  tangent vectors briefly by $[L_z,Q]$.  



\begin{lemma}\label{tgte cerrado}
Assume that  $\I \neq \k$.
 Then tangent spaces of $\UIP$ are closed in $\BI$ if and only if $w < \infty$ and  there is only one infinite rank projection in the family $\{ \,  p_i \, \}_0 ^w$. 
\end{lemma}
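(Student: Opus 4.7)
The plan is to analyze the continuous linear map $d:\I_{sh}\to\BI$, $d(z):=L_zP-PL_z$, whose image is the tangent space of $\UIP$ at $P$. By Lemma~\ref{calc isotropia} its kernel equals $\g$, and in the proof of Proposition~\ref{est homog} the subspace $\mathcal{M}=\{z\in\I_{sh}:p_izp_i=0\ \forall i\geq 0\}$ is exhibited as a closed topological supplement; hence the tangent space at $P$ is closed iff $d|_\mathcal{M}$ is bounded below. Since $T\mapsto L_uTL_{u^*}$ is an isometric automorphism of $\BI$ that carries the tangent space at $P$ onto that at $L_uPL_{u^*}$, it suffices to treat the base point. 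Two inequalities drive the argument: the upper bound $\|d(z)\|_{\BI}\leq 2\|z\|$ (operator norm, since $\|L_z\|_{\BI}=\|z\|$ and $\|P\|_{\BI}=1$), and the lower bound $\|d(z)\|_{\BI}\geq\|p_izp_j\|$ for $i\geq 1$, $j\geq 0$, $i\neq j$ (Lemma~\ref{desigualdad}).

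For the sufficient direction, assume $w<\infty$ and that a single $p_{i_0}\in\{p_i\}_0^w$ is of infinite rank; put $R:=\max_{k\neq i_0}\rank(p_k)<\infty$. For $z\in\mathcal{M}$ each off-diagonal block $p_izp_j$ has rank $\leq R$, so the symmetric norm is dominated by the operator norm, $\|v\|_\I\leq\Phi_R\|v\|$ with $\Phi_R:=\Phi(1,\dots,1,0,\dots)$ ($R$ ones). Lemma~\ref{desigualdad} directly controls the blocks with $i\geq 1$; the remaining blocks $p_{i_0}zp_j$ are handled via the identity $\|p_{i_0}zp_j\|=\|p_jzp_{i_0}\|$, which follows from $z^*=-z$ and reduces to the previous case. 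Summing $\|p_izp_j\|_\I\leq\Phi_R\|d(z)\|_{\BI}$ over the finitely many off-diagonal pairs yields $\|z\|_\I\leq C\|d(z)\|_{\BI}$, the desired bounded-below estimate.

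The necessary direction is proved by contrapositive: if $w=\infty$ or at least two projections in $\{p_i\}_0^w$ have infinite rank, I will construct $z_n\in\mathcal{M}$ with $\|z_n\|_\I\geq 1$ and $\|d(z_n)\|_{\BI}\to 0$. A preliminary point is that $\I\neq\k$ forces $\Phi_N\to\infty$ as $N\to\infty$: otherwise every compact $x$ would satisfy $\|x\|_\Phi\leq(\sup_N\Phi_N)\|x\|$, giving $\k\subseteq\I$. If two projections $p_i,p_j$ have infinite rank, pick partial isometries $u_r$ with $u_ru_r^*=p_r$ and $u_r^*u_r=1$ ($r=i,j$), together with $x_n\in\I$ satisfying $\|x_n\|_\I=1$ and $\|x_n\|\to 0$. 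Setting $v_n:=u_ix_nu_j^*\in p_i\I p_j$ (same singular values as $x_n$) and $z_n:=v_n-v_n^*\in\mathcal{M}$ gives $\|z_n\|_\I\geq\|p_iz_np_j\|_\I=\|v_n\|_\I=1$ and $\|d(z_n)\|_{\BI}\leq 2\|z_n\|=2\|v_n\|\to 0$. Otherwise $w=\infty$: choose unit vectors $f_k\in R(p_{2k-1})$, $g_k\in R(p_{2k})$, set $c_n:=1/\Phi_{2n}\to 0$, and define $z_n:=c_n\sum_{k=1}^{n}(f_k\otimes g_k-g_k\otimes f_k)$. Since the pairs $\{f_k,g_k\}$ are mutually orthogonal and each summand acts as $c_n$ times a rotation on $\mathrm{span}\{f_k,g_k\}$, the nonzero singular values of $z_n$ are $c_n$ with multiplicity $2n$, so $\|z_n\|_\I=c_n\Phi_{2n}=1$ and $\|z_n\|=c_n\to 0$, while $z_n\in\mathcal{M}$ because each summand is purely off-diagonal. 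The main subtleties I anticipate are establishing $\Phi_N\to\infty$ from $\I\neq\k$ and arranging the case split so that the two constructions together cover every failure of the stated condition.
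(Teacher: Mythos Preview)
Your proof is correct and uses the same key ingredients as the paper (Lemma~\ref{desigualdad}, the finite-rank bound $\|v\|_\I\leq\Phi_R\|v\|$, and the explicit off-diagonal constructions driven by $\Phi_N\to\infty$), but it is organized around the equivalent reformulation that $(T\UIP)_P$ is closed iff $d|_\mathcal{M}$ is bounded below. For the sufficient direction the paper takes a Cauchy sequence $([L_{z_k},P])_k$, uses Lemma~\ref{desigualdad} to show each block $(p_iz_kp_j)_k$ is Cauchy in operator norm and hence in $\|\cdot\|_\I$ by the rank bound, and assembles a limit; you instead derive the explicit estimate $\|z\|_\I\leq C\|d(z)\|_{\BI}$ directly. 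For the necessary direction the paper first argues that closedness would force $x-\sum_i p_ixp_i\in\I$ for \emph{every} compact $x$ and then exhibits a compact $x\notin\I$ that is purely off-diagonal, whereas you bypass this intermediate step and directly produce $z_n\in\mathcal{M}$ with $\|z_n\|_\I\geq 1$ and $\|d(z_n)\|_{\BI}\leq 2\|z_n\|\to 0$. The underlying constructions (and the case split) are essentially those of Remark~\ref{sequence prop}; your packaging is a bit more streamlined.

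One cosmetic slip: the blocks not directly covered by Lemma~\ref{desigualdad} are those with first index $0$, i.e.\ $p_0zp_j$, not $p_{i_0}zp_j$ (when $i_0\geq 1$ the block $p_{i_0}zp_j$ is already handled). Your adjoint identity $\|p_0zp_j\|=\|p_jzp_0\|$ still disposes of them, so the argument is unaffected.
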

\begin{proof}
It suffices to prove the statement for the tangent space  at $P$. Indeed, if $Q=L_uPL_{u^*}$ for some $u \in \UI$, then $[L_z , Q]=L_u[L_{u^*zu},P]L_{u^*}$. Thus $(T\UI(P))_Q$ is closed in $\BI$ if and only if $(T\UI(P))_P$ is closed in $\BI$.

Suppose that $(T\UI(P))_P$ is closed in $\BI$.  Let $x \notin \I$ be a  compact operator and  $(e_n)_n$ be a sequence of    finite rank projections such that $e_n \nearrow 1$ in the strong operator topology. Since $x$ is compact, the sequence of finite rank operators $z_n=e_nxe_n$   satisfies $\|x-z_{n}\| \rightarrow 0$. Let $\Re e (\, . \,)$ be the real part of an operator, then 
\begin{align*}
\| \, [L_{\Re e(z_{n})}, P]  -	[L_{\Re e(x)},P] \, \|_{\BI} & 
	\leq 2 \| L_{\Re e(z_n)} - L_{\Re e(x)} \|_{\BI} \\
	& = 2 \| \Re e(z_n) - \Re e(x) \| \leq 2 \|z_n - x \|	\rightarrow 0.
\end{align*}	 
Then there exists some $z_0 \in \I_{sh}$ such that 
$[L_{z_0},P]=[L_{i\Re e(x)},P]$. We can proceed analogously with the imaginary part $\Im m (\, . \,)$ to find another operator $z_1 \in \I_{sh}$ such that $[L_{z_1},P]=[L_{i\Im m(x)},P]$. Hence we obtain $[L_x,P]=[L_{z},P]$ for $z=-iz_0 + z_1 \in \I$. By Lemma \ref{calc isotropia} the latter can be  rephrased as  
\[  x-z= \sum_{i=0}^{w} p_i (x-z)p_i. \]
In particular, we see that 
\begin{equation}\label{contradiccion}
x-\sum_{i=0}^{w} p_i xp_i  \in \I.
\end{equation}  
 Recall that  $\I=\mathfrak{S}_{\Phi}$ for some  symmetric norming function $\Phi$.  Since $\I$ is different from the compact operators, there exists a sequence of positive numbers $(a_n)_n$ such that $a_n \rightarrow 0$ and $\Phi((a_n)_n)=\infty$. 

Suppose that the family  $\{ \, p_i \, \}_{0}^w$ has two  projections $p_i$, $p_j$, $i\neq j$, such that both have infinite rank. Let $(  \xi_n  )_{n}$ be an orthonormal basis of $R(p_i)$ and $(  \eta_n  )_{n}$ be an orthonormal basis of $R(p_j)$. Consider the following compact operator:
\[  x=\sum_{n=1}^{\infty} a_n \, \xi_n \otimes \eta_n. \]   
From our choice of the sequence $(a_n)_n$ it follows that $x \notin \I$. Thus  we find that $x=p_ixp_j=x-\sum_{i=0}^{w}p_ixp_i \notin \I$, which contradicts equation (\ref{contradiccion}). Hence it is impossible to have two different projections with infinite rank in the family $\{ \, p_i \, \}_{0} ^w$.

It remains to prove that $w < \infty$. Suppose that there is an  infinite number of  projections $p_{1}, p_{2}, \ldots$.  We can construct an orthonormal system of vectors $( \xi_i  )_{i}$ such that $\xi_i \in R(p_{i})$. Then we define the following compact operator:
\[  x= \sum_{n=1}^{\infty} a_n \, \xi_{n+1} \otimes \xi_n. \]  
It is easily seen  that $x=\sum_{n=1}^{\infty} p_{n+1}xp_n=x-\sum_{i=0}^{\infty}p_ixp_i \notin \I$. We thus get again a contradiction with equation (\ref{contradiccion}).

\smallskip

In order to prove the converse we assume that the  family $\{ \, p_i  \, \}_0 ^w$ satisfies  $w < \infty$ and it has only one projection $p_{i_0}$ with infinite rank.   Let $(z_{k})_{k}$ be a sequence in $\I_{sh}$
such that $\|\,[L_{z_{k}},P]\, - X\|_{\BI} \rightarrow 0$, where $X \in \BI$. It is worth  noting that by Lemma \ref{calc isotropia} the sequence $(z_{k})_{k}$ can be chosen satisfying $p_iz_kp_i=0$  for all $k$ and $i=0, \ldots, w$.  Since $(\,[L_{z_k},P]\,)_k$ is a Cauchy sequence in $\BI$,   Lemma \ref{desigualdad} implies that    
\[\|p_{i}(z_{k}-z_{r})p_{j}\|
	\underset{k,r \rightarrow \infty}{\longrightarrow} 0
\]
for $i=1, \ldots , w$, $j=0, \ldots, w$ and $i \neq j$.  Note that the rank of the operators $p_i(z_k -z_r)p_j$ is uniformly bounded on the subscripts $k$ and $r$ by $C:= \max \{  \, \rank (p_j)  \, : \, j=0, \ldots, w, \, j \neq i_0  \, \}$. Then we get
	\[\|p_{j}(z_{k}-z_{r})p_{i}\|_{\I} \leq 	C \|p_{j}(z_{r}-z_{k})p_{i}\|
	\underset{k,r \rightarrow \infty}{\longrightarrow} 0.
\]
Hence each $(p_{j}z_k p_{i})_k$ converges in the ideal norm to some $z_{ij} \in \I$.  We can construct an operator $z$ by defining its matricial blocks with respect to the projections $p_0, p_1, \ldots , p_w$ as follows:  
$$ p_izp_j :=\left\{
\begin{array}{cc}
0              & \, \, \, \, \, \, \, \, \, \text{if} \, \, \, \, \, \,  i=j,\\
z_{ij} & \, \, \, \, \, \, \, \, \, \text{if}  \, \, \, \, \, \, i \neq j.
\end{array}\right.
$$
Then $z$ is a skew-hermitian operator in $\I$ satisfying 
	\[\|z - z_{k}\|_{\I} \leq	\d\sum_{i \neq j} \|p_{j}zp_{i}-p_{j}z_{k}p_{i}\|_{\I} = \sum_{i \neq j} \| z_{ij} - p_jz_kp_i \|_{\I} \rightarrow 0.
\]
Therefore
	\[\| \, [L_{z_{k}},P] -
	 [L_{z},P] \,\|_{\BI} \leq 2 \|L_{z_k} - L_{z}\|_{\BI} =2 \| z_k - z\| \leq 2 \| z_k - z \|_{\I} \rightarrow 0.  
\]
Hence we conclude $X=[L_z ,P]$, and the lemma is proved.
\end{proof}

\noi We can endow $\UI(P)$ with two natural topologies. According to Proposition  \ref{est homog} we have that $\UI(P)\simeq \UI / G$ has a real analytic manifold structure in the quotient topology  in such way that the map $\pi: \UI \longrightarrow \UI(P)$, $\pi(u)=L_uPL_{u^*}$ is a real analytic submersion. On the other hand, we can regard  $\UI(P)$ as a subset of $\BI$ with the inherited topology. In this case, we denote the projection map by $\tilde{\pi}: \UI \longrightarrow \UI(P)$, $\tilde{\pi}(u)=L_uPL_{u^*}$. Note that $\tilde{\pi}$ is also continuous, and the following diagram commutes

\begin{displaymath}
\xymatrix{
\UI \ar[r]^{\pi} \ar[dr]_{\tilde{\pi}} & \UI(P) \ar[d]^{id}  \hspace{-0.5cm}& \hspace{-.5cm} \simeq \UI /G  \\
                                   & \UI(P)             \hspace{-0.5cm} &  \hspace{-0.5cm} \subseteq \BI 
}
\end{displaymath}

\smallskip

\noi Here $id$ stands for the identity map. Note that $id$ is always continuous, but it may not be a homeomorphism. In fact, we will show that the two topologies defined on $\UI(P)$ coincide if and only if  tangent spaces are closed. As we will see, the proof of this result depends on the existence of continuous local cross sections for the action.

\medskip

\begin{remarknf}Let $P$ be the pinching operator associated with a family  $\{ \, p_i \}_1 ^w$. We will consider the unitary orbit of each projection $p_i$, i.e.   \[   \o _{i} := \{ \, up_iu^* \, : \, u  \in \UI \,   \}. \]
If $\I$ is the ideal of Hilbert-Schmidt operators, the above defined orbits are usually known as the connected component of $p_i$ in the restricted Grassmannian (see e.g. \cite{ps}). Note that $\o_i \subseteq p_i + \I$, so we may endow  each orbit  with the subspace topology defined by the metric $(up_iu^* , vp_iv^*)\mapsto \|  up_iu^*  - vp_iv^* \|_{\I}$. 
\end{remarknf}

\begin{lemma}\label{cont fes}
Assume that   $w < \infty$ and there is only one infinite rank projection  in  the family $\{ \, p_i \}_0 ^w$. Then the map $$F_i:  \UI(P)   \longrightarrow \mathcal{O}_i, \, \, \, \, \, \, \, \,  F_i(L_u P L_{u^*})=up_iu^*$$ is continuous for $i=0, 1,\ldots w$, when $\UI(P)$ is endowed with the topology inherited from $\BI$.
\end{lemma}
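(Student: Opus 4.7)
My plan is to first verify that $F_i$ is well defined, then reduce continuity to a statement at the base point $P$, and finally bound the commutator $[u_n,p_i]$ via an off-diagonal block decomposition that exploits both hypotheses. Well-definedness follows immediately from Lemma \ref{calc isotropia}: if $L_uPL_{u^*}=L_vPL_{v^*}$, then $v^*u$ lies in the isotropy $G$, hence is block diagonal with respect to $\{p_k\}_0^w$ and in particular commutes with $p_i$, giving $up_iu^*=vp_iv^*$.

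For the reduction, suppose $Q_n=L_{u_n}PL_{u_n^*}\to Q=L_uPL_{u^*}$ in $\BI$, and set $v_n=u^*u_n\in\UI$. Then $L_{v_n}PL_{v_n^*}=L_{u^*}Q_nL_u\to L_{u^*}QL_u=P$, while $u_np_iu_n^*=u(v_np_iv_n^*)u^*$. Since conjugation by the fixed unitary $u$ is an isometry for $\|\cdot\|_{\I}$, it suffices to prove the local statement: whenever $L_{u_n}PL_{u_n^*}\to P$ in $\BI$, one has $u_np_iu_n^*\to p_i$ in $\I$.

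For the local statement, the identity $u_np_iu_n^*-p_i=[u_n,p_i]u_n^*$ combined with the block expansion $[u_n,p_i]=\sum_{k\neq i}p_ku_np_i-\sum_{k\neq i}p_iu_np_k$ (sums over $k\in\{0,1,\ldots,w\}\setminus\{i\}$) and the fact that $u_n^*$ is unitary gives
\[
\|u_np_iu_n^*-p_i\|_{\I}\leq\sum_{k\neq i}\bigl(\|p_ku_np_i\|_{\I}+\|p_iu_np_k\|_{\I}\bigr),
\]
which is a \emph{finite} sum thanks to $w<\infty$. To push information out of $Q_n\to P$, note first that $\|L_{u_n}P-PL_{u_n}\|_{\BI}=\|Q_n-P\|_{\BI}\to 0$ because $L_{u_n^*}$ is an isometry of $\BI$. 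Applying the same reasoning to the identity $L_{u_n^*}PL_{u_n}-P=L_{u_n^*}(P-Q_n)L_{u_n}$ yields $\|L_{u_n^*}P-PL_{u_n^*}\|_{\BI}\to 0$ as well. Feeding $x=u_n-1$ and $x=u_n^*-1$ into Lemma \ref{desigualdad}, and taking adjoints (which preserves operator norm), I obtain $\|p_au_np_b\|\to 0$ in operator norm for every pair of distinct indices $a,b\in\{0,1,\ldots,w\}$.

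The main obstacle is upgrading these operator norm estimates to ideal norm estimates, and this is where the hypothesis that only one projection $p_{i_0}$ has infinite rank is essential: for $k\neq i$, at most one of $p_k,p_i$ is $p_{i_0}$, so each block $p_ku_np_i$ has finite rank bounded by $\min\{\rank(p_k),\rank(p_i)\}$. The standard estimate $\|x\|_{\I}\leq\Phi(\underbrace{1,\ldots,1}_{r},0,\ldots)\,\|x\|$ for operators $x$ of rank at most $r$, which follows from the singular-value expansion and the symmetry of $\Phi$, converts the operator-norm convergence into ideal-norm convergence of each block. Summing the finitely many vanishing terms concludes the proof.
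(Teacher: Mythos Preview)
Your proof is correct and follows essentially the same approach as the paper: well-definedness via Lemma~\ref{calc isotropia}, reduction to the base point by isometry of the action, the block decomposition of $[u,p_i]$, operator-norm control of off-diagonal blocks via Lemma~\ref{desigualdad} (applied to $u-1$ and $u^*-1$), and the upgrade to the ideal norm using the uniform finite-rank bound. The only cosmetic difference is that the paper packages the argument as an explicit Lipschitz estimate $\|F_i(L_uPL_{u^*})-F_i(P)\|_{\I}\leq 2wC\,\|L_uPL_{u^*}-P\|_{\BI}$ (with $C=\max\{\rank(p_j):j\neq i_0\}$), whereas you phrase it as sequential continuity; your bounds would yield the same Lipschitz inequality if you kept track of the constants.
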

\begin{proof}
We first show that the function $F_i$ is well defined for $i=0,1, \ldots ,w$. From Lemma \ref{calc isotropia} we know that $L_uPL_{u^*}=L_vPL_{v^*}$ implies $v^*u=\sum_{i=0}^w p_iv^*up_i$. Then we get $v^*up_i=p_iv^*up_i=p_iv^*u$, or equivalently, $up_iu^*=vp_iv^*$. 

To prove  the continuity of $F_i$ we will actually  see that $F_i$ is Lipschitz. Since the underlying actions are isometric, it suffices to estimate the distance from $F_i(L_uPL_{u^*})=up_iu^*$ to  $F_i(P)=p_i$.  
For $u \in \UI$, set $a(u):= \| L_uPL_{u^*} - P \|_{\BI}= \| \, [L_u ,P] \, \|_{\BI} $.    From Lemma \ref{desigualdad} it follows that
\[  \| p_i u p_j \| = \| p_i (u-1) p_j \| \leq a(u), \] 
for $j=0,1, \ldots, w$, $i=1, \ldots ,w$ and $i\neq j$. The same estimate can be extended for all $i \neq j$. In fact, we have
\[  \|p_jup_i \|= \| p_i u^* p_j \| \leq a(u^*)= a(u).   \]  
Let $p_{i_0}$ the unique infinite rank projection in the family $\{ \, p_i \, \}_0 ^w$. For $u \in \UI$, we note that $\rank(p_iup_j) \leq  \min \{ \, \rank(p_i) \, , \, \rank(p_j) \, \}$, and then  we get
$$ 
  \max \{ \, \rank(p_{j}up_i) \, : \, i,j=0,1, \ldots, w, \, i \neq j  \,  \} \leq \max \{ \, \rank(p_{j}) \, : \, j=0,1, \ldots, w, \, j \neq i_0 \,  \}:=C .
$$
This implies that 
\[  \|  p_i u p_j \|_{\I} \leq C \|  p_i u p_j \| \]
for $i \neq j$. Thus we get
\begin{align}\label{dominio bueno}
\| F_i(L_uPL_{u^*}) - F_i (P) \|_{\I} & = \| up_i - p_i u \|_{\I} 
\leq \sum_{j: j \neq i} \|  p_j u p_i \|_{\I} + \sum_{j: j \neq i} \|  p_i u p_j \|_{\I} \nonumber \\
& \leq C  \bigg( \sum_{j: j \neq i}  \|  p_j u p_i \| + \sum_{j: j \neq i} \|  p_i u p_j \| \bigg) \nonumber \\ 
& \leq 2 w C \| L_uPL_{u^*} - P \|_{\BI},
\end{align}
which shows that $F$ is Lipschitz.
\end{proof}

\begin{remarknf}\label{sequence prop}Let $\mathcal{M}$ be the supplement of the Lie algebra defined in Proposition \ref{est homog}. Suppose that $w=\infty$ or there exist two different infinite rank projections in the family $\{ \, p_i \, \}_0 ^w$. Under the  assumption that $\I \neq \k$, we will  construct a sequence $(z_k)_k$ in   $\mathcal{M}$ satisfying $\|z_k\| \to 0$ and $\| z_k \|_{\I}=1$. To this end, put
\[ a_k := \Phi(\underbrace{1,1,\ldots , 1}_{k}, 0 ,0 , \ldots ),        \]
where $\Phi$ is a symmetric norming function such that  $\I=\mathfrak{S}_{\Phi}$. Since $\I\neq \k$,  it follows that $\Phi$ is not equivalent to the uniform norm of $\ell^{\infty}$, so that $a_k \to \infty$  (see \cite[p. 76]{GK}). 
In the case in which $w=\infty$, let $(\xi_i)_i$ be an orthonormal system such that $\xi_i \in R(p_i)$ for all $i\geq 1$. It is not difficult to see that the sequence defined by
\[  z_k:= a_{2k}^{-1} \, \sum_{i=1}^{k} \, \xi_{2i -1} \otimes \xi_{2i} -  \xi_{2i} \otimes \xi_{2i-1}    \]
satisfies the required properties. In the case in which  there exist two different infinite rank projections $p_i$ and $p_j$, let $(\xi_i)_i$ be an orthonormal system such that $\xi_{2k-1} \in R(p_i)$ and $\xi_{2k} \in R(p_j)$ for all $k\geq 1$. Then we can define the sequence $(z_k)_k$ in the same fashion as before.\end{remarknf}

\begin{lemma}\label{topology}
Assume that $\I \neq \k$.  Then the following conditions are equivalent: 
\begin{itemize}
\item[i)]  The quotient topology of $\UI(P)$ coincides with the topology inherited from $\BI$.  
\item[ii)] $w < \infty$ and there is only one infinite rank projection  in  the family $\{ \, p_i \}_0 ^w$.
\end{itemize}
\end{lemma}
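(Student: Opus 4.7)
The identity map from $\UI(P)$ with the quotient topology to $\UI(P)$ with the topology inherited from $\BI$ is always continuous, since $\tilde\pi$ is continuous and $\pi$ is a topological quotient. Hence the two topologies coincide if and only if $\tilde\pi$ admits a continuous local cross section near $P$ when $\UI(P)$ carries the inherited topology; by $\UI$-homogeneity it suffices to work near $P$. My plan is to build such a cross section under $(ii)$ and to construct an explicit obstruction when $(ii)$ fails.

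$(ii)\Rightarrow(i)$. Assume $w<\infty$ and that only $p_{i_0}$ has infinite rank. For $Q=L_uPL_{u^*}$ close to $P$ in $\BI$, Lemma~\ref{cont fes} gives $q_i:=up_iu^*\to p_i$ in $\|\cdot\|_\I$ for every $i=0,1,\dots,w$. Since the family is finite, I use the classical intertwining-unitary construction: set
\[
t:=\sum_{i=0}^{w}q_ip_i,\qquad v:=t\,|t|^{-1}.
\]
Direct computations yield $tp_j=q_jt$, $t^*t=\sum_i p_iq_ip_i$ (block-diagonal w.r.t.\ $\{p_i\}_0^w$), and $t-1=\sum_i(q_i-p_i)p_i\in\I$ with $\|t-1\|_\I\to 0$ as $Q\to P$. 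So for $Q$ near enough to $P$, the operator $t$ is invertible, $v\in\UI$, and $\|v-1\|_\I\to 0$; because $|t|^{-1}$ commutes with every $p_j$, the identity $vp_j=q_jv$ (hence $vp_jv^*=q_j$) follows at once from $tp_j=q_jt$. Then $v^{-1}u$ commutes with every $p_i$, so by Lemma~\ref{calc isotropia} it belongs to the isotropy group $G$, and therefore $\tilde\pi(v)=Q$. The assignment $Q\mapsto v$ is the required continuous local cross section.

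$(i)\Rightarrow(ii)$. I argue contrapositively. If $(ii)$ fails, Remark~\ref{sequence prop} furnishes $(z_k)\subset\mathcal M$ with $\|z_k\|\to 0$ and $\|z_k\|_\I=1$; put $u_k:=e^{z_k}\in\UI$. Decomposing
\[
u_kP(u_k^*y)-P(y)=(u_k-1)P(y)+u_kP((u_k^*-1)y)
\]
and using $\|P\|_{\BI}=1$ gives $\|L_{u_k}PL_{u_k^*}-P\|_{\BI}\le 2\|u_k-1\|\le 2(e^{\|z_k\|}-1)\to 0$, so the orbit converges to $P$ in the inherited topology. Convergence in the quotient topology would yield $g_k\in G$ with $\|u_kg_k-1\|_\I\to 0$. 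Let $P'$ be the pinching associated with the full family $\{p_i\}_0^w$ and set $\Pi:=\mathrm{id}_\I-P'$; then $\|\Pi\|_{\BI}\le 2$, and by Lemma~\ref{calc isotropia} every $g\in G$ is block-diagonal, so $\Pi(g-1)=0$. Writing $u_k-1=z_k+R_k$ with $\|R_k\|_\I\to 0$, block-diagonality of $g_k$ together with $p_iz_kp_i=0$ gives $P'(z_kg_k)=\sum_i p_iz_kp_ig_kp_i=0$; hence $\Pi(u_kg_k-1)=z_kg_k+\Pi(R_kg_k)$. Unitary invariance of $\|\cdot\|_\I$ forces $\|z_kg_k\|_\I=\|z_k\|_\I=1$, whence $\|\Pi(u_kg_k-1)\|_\I\to 1$, contradicting $\|\Pi(u_kg_k-1)\|_\I\le\|\Pi\|_{\BI}\|u_kg_k-1\|_\I\to 0$.

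The main technical obstacle is the cross-section step in $(ii)\Rightarrow(i)$: verifying that the intertwining polar unitary $v=t|t|^{-1}$ truly conjugates $p_j$ to $q_j$ and that $v$ depends $\I$-norm continuously on $Q$. The hypothesis $w<\infty$ is essential so that the series defining $t$ converges in $\I$-norm without any further summability hypothesis.
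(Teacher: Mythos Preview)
Your proof is correct in both directions, but the methods differ from the paper's in interesting ways.

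For $(ii)\Rightarrow(i)$, the paper invokes an external result from \cite{AL8} giving continuous local cross sections $\psi_i$ for each individual orbit map $\pi_i:\UI\to\mathcal{O}_i$, and then patches them together as $\sigma(L_uPL_{u^*})=\sum_{i=0}^w\psi_i(up_iu^*)p_i$. Your construction $t=\sum_i q_ip_i$, $v=t|t|^{-1}$ is more self-contained; it is in fact exactly the device the paper itself uses later in Proposition~\ref{topology ctoss} for $\I=\k$ (where $w$ may be infinite and one needs the series for $t$ to converge strongly). Here $w<\infty$ makes the sum finite and the $\I$-norm continuity of $Q\mapsto t$ immediate, so your argument is cleaner and avoids the external reference. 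Your verification that $|t|$ is block-diagonal, hence $vp_j=q_jv$, is the key algebraic point and is correct.

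For $(i)\Rightarrow(ii)$, the paper uses the local chart $\psi(z)=L_{e^z}PL_{e^{-z}}$ on a neighborhood $\mathcal{W}\subseteq\mathcal{M}$ and reads off $z_k=\psi^{-1}(L_{e^{z_k}}PL_{e^{-z_k}})\to 0$ in $\|\cdot\|_\I$. Your route---extracting $g_k\in G$ with $\|u_kg_k-1\|_\I\to0$, then applying the off-diagonal projection $\Pi=\mathrm{id}-P'$---is a direct substitute that works equally well. One step you state without proof is that convergence $[u_k]\to[1]$ in the quotient topology produces such $g_k$; this follows because $\pi$ is a submersion (Proposition~\ref{est homog}) and hence admits a continuous local section $\sigma$ near $P$ with $\sigma(P)=1$, so $u_kg_k:=\sigma([u_k])\to 1$. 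Your computation $P'(z_kg_k)=\sum_i p_iz_kp_i\cdot g_kp_i=0$ (using that $g_k$ is block-diagonal and $z_k\in\mathcal{M}$) is correct, and the contradiction $1\leftarrow\|z_kg_k\|_\I\le\|\Pi(u_kg_k-1)\|_\I+2\|R_k\|_\I\to 0$ is clean.

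In summary: both halves are valid. Your cross-section construction is the ``global'' polar-decomposition trick (self-contained, no appeal to \cite{AL8}), while the paper's is a gluing of per-projection sections; your obstruction argument trades the manifold chart for an explicit off-diagonal projection. Each approach has its merits; yours is arguably more elementary.
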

\begin{proof}
Suppose that the quotient topology of $\UI(P)\simeq \UI /G$ coincides with the topology inherited form $\BI$. Let $\mathcal{M}$ be the supplement of the Lie algebra of $G$ defined in Proposition \ref{est homog}. Recall that a real analytic atlas of $\UI(P)$ compatible with the quotient topology can be constructed by translation of the homeomorphism
\[  \psi: \mathcal{W} \subseteq \mathcal{M} \longrightarrow \psi(\mathcal{W}),  \, \, \, \, \, \, \, \psi(z)=(\pi \circ \exp_{\UI})(z)=  L_{e^z} P L_{e^{-z}},   \]
where $\mathcal{W}$ is an open neighborhood of $0 \in \mathcal{M}$ and $\psi(\mathcal{W})$ an open neighborhood of $P$ (see for instance \cite[Theorem 4.19]{B}). 
Assume that the family $\{ \, p_i \, \}_0 ^w$ does not satisfy the claimed properties. This leads us to consider two cases, namely $w=\infty$ or there exist two different infinite rank projections in $\{ \, p_i \, \}_0 ^w$. In any case we can find a sequence $(z_k)_k$ in $\mathcal{M}$ such that $\| z_k\|\to 0$ and $\|z_k\|_{\I}=1$ according to Remark \ref{sequence prop}. Then note that
\[ \| L_{e^{z_k}} P L_{e^{-z_k}} - P \|_{\BI} = \| \, [ L_{e^{z_k}- 1} , P ] \,\|_{\BI} \leq 2 \| e^{z_k} - 1 \| \to 0,  \]
and using that the quotient topology of $\UI(P)$ coincides with the subspace topology, we arrive at a contradiction:  $\| z_k\|_{\I}=\| \psi^{-1}( L_{e^{z_k}} P L_{e^{-z_k}})\|_{\I} \to 0$.

\smallskip

To prove the converse, assume that  $w< \infty$ and there is only one infinite rank projection  in  the family $\{ \, p_i \}_0 ^w$. Clearly, our assertion about the topology of $\UI(P)$ will follow if we show that the projection map
\[ \tilde{\pi} : \UI \longrightarrow \UI(P), \, \, \, \, \, \, \tilde{\pi}(u)=L_u P L_{u^*} \]  
have continuous local cross sections, when $\UI(P)$ is considered with the relative topology of $\BI$. To this end, for $i=0,1, \ldots, w$, we need to consider the orbits  
\[   \o _{i} := \{ \, up_iu^* \, : \, u  \in \UI \,   \}. \]
In  \cite[Proposition 2.2]{AL8} the authors showed that the maps
\[  \pi_i: \UI \longrightarrow \o _i,  \, \, \, \, \,  \pi_i(u)=up_iu^*,\]
has continuous local cross sections, when $\I$ is the ideal of Hilbert-Schmidt operators. Actually, the same proof works out  for any symmetrically-normed ideal $\I$, so we have that there exist continuous maps 
\[  \psi_i : \{ \, q \in \o_i    \,: \,  \| q - p_i \|_{\I} < 1    \,   \} \subseteq p_i + \I \longrightarrow \UI \]
such that $\psi_i(up_iu^*)p_i\psi_i(up_iu^*)^*=up_iu^*$ for any $u \in \UI$ such that $\| up_iu^*- p_i \|_{\I}< 1$.

Now we can  explicitly give  the required section for  $\tilde{\pi}$, namely
\[ \sigma: \big\{  \,  Q \in \UI(P)  \, : \,   \| Q - P \|_{\BI} < 1/2wC   \,  \big\} \longrightarrow \UI, \, \, \, \, \, \,  \sigma (L_uPL_{u^*})=\sum_{i=0}^w \psi_i(up_iu^*)p_i. \]    
If $Q=L_uPL_{u^*}$ lies in the domain of $\sigma$, then by the estimate (\ref{dominio bueno}) in Lemma \ref{cont fes}, the operators $up_iu^*$ do lie in the domain of each $\psi_i$.   Our next task is to show that $\sigma=\sigma(L_uPL_{u^*}) \in \UI$. In fact, we see that
\[  \sigma \sigma^*= \bigg(\sum_{i=0}^w \psi_i(up_iu^*)p_i \bigg) \bigg(\sum_{i=0}^w p_i\psi_i(up_iu^*)^*\bigg) = \sum_{i=0}^w \psi_i(up_iu^*)p_i\psi(up_iu^*)^*=\sum_{i=0}^w up_iu^*=1.  \] 
Note that $p_j\psi_j(up_ju^*)^* \psi_i(up_iu^*)p_i= \psi_j(up_ju^*)^* up_jp_iu^* \psi_i(up_iu^*)=\delta_{ij}$, then
\[ \sigma^*\sigma=  \bigg(\sum_{i=0}^w p_i\psi_i(up_iu^*)^*\bigg) \bigg(\sum_{i=0}^w \psi_i(up_iu^*)p_i \bigg)= \sum_{i=0}^w p_i = 1.    \]
Also  we see that $$  \sigma -1 = \sum_{i=0}^w (\psi_i(up_iu^*) -1)p_i  \in \I.$$
On the other hand, the map $\sigma$ is  actually  a section for $\pi$: for any $y \in \I$,  
\begin{align*}
L_{\sigma(L_uPL_{u^*})}P L_{\sigma(L_uPL_{u^*})^*}(y) & =\sum_{i=0}^w \sigma(L_uPL_{u^*})p_i\sigma(L_uPL_{u^*})^*yp_i \\
& = \sum_{i=0}^w \psi_i(up_iu^*)p_i \psi_i(up_iu^*)^*yp_i \\
& =   \sum_{i=0}^w up_iu^* yp_i= L_uPL_{u^*}(y).
\end{align*}
 Finally, to show   the continuity of $\sigma$, it is enough to remark that 
\[  \sigma (L_uPL_{u^*})=\sum_{i=0}^w \psi_i( F_i(L_uPL_{u^*})) p_i  \]
and use the continuity of each $F_i$, which has  already been proved in Lemma \ref{cont fes}. 
\end{proof}

\noi Now our main result on the differential structure of $\UI(P)$ follows.

\begin{theorem}\label{complemento}
Let $\Phi$ a symmetric norming function, and $\I=\mathfrak{S}_{\Phi}$. Assume that $\I \neq \k$. Let $P$ be the pinching operator associated with a family  $\{ \, p_i \}_1 ^w$ $(1\leq w \leq \infty)$.
Then  the following assertions are equivalent:
\begin{itemize}
\item[i)] The quotient topology on $\UI(P)$ coincides with topology inherited from $\BI$.
\item[ii)] Tangent spaces of $\UI(P)$ are closed in $\BI$.
\item[iii)]    $w < \infty$ and  there is only one infinite rank projection in the family $\{ \,  p_i \,\}_0 ^w$. 
\item[iv)] $\UI(P)$ is  a   submanifold of $\BI$. 
\end{itemize}
\end{theorem}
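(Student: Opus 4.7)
The claim splits neatly into what the preceding two lemmas already provide and one new implication to incorporate. Together, Lemma~\ref{tgte cerrado} and Lemma~\ref{topology} establish $\mathrm{(i)}\Leftrightarrow\mathrm{(ii)}\Leftrightarrow\mathrm{(iii)}$, so my plan is to tie $\mathrm{(iv)}$ into this chain using the submanifold criterion of Proposition~\ref{sous varietes}.

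The direction $\mathrm{(iv)}\Rightarrow\mathrm{(i)}$ and $\mathrm{(iv)}\Rightarrow\mathrm{(ii)}$ is immediate from that criterion: if $\UIP$ (equipped with the quotient topology from Proposition~\ref{est homog}) is a submanifold of $\BI$, then by definition its topology agrees with the subspace topology from $\BI$, and its tangent spaces are complemented in $\BI$ and hence closed.

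The substantive direction is $\mathrm{(iii)}\Rightarrow\mathrm{(iv)}$. Since $\mathrm{(iii)}\Rightarrow\mathrm{(i)}$ already supplies the topological coincidence, Proposition~\ref{sous varietes} reduces me to showing that the tangent space of $\UIP$ has a closed complement in $\BI$ at every point. The conjugation $X\mapsto L_u X L_{u^*}$ is an isometry of $\BI$ satisfying $L_u[L_z,P]L_{u^*}=[L_{uzu^*},L_uPL_{u^*}]$, so it intertwines $T_P$ with the tangent space at $Q=L_uPL_{u^*}$; it therefore suffices to complement $T_P$. My plan is to construct a bounded real-linear projection $\Pi:\BI\to T_P$ by hand. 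By Lemma~\ref{tgte cerrado} the differential $D\pi_1:\mathcal{M}\to\BI$, $z\mapsto[L_z,P]$, is a bounded real-linear bijection onto the closed subspace $T_P$, hence a Banach-space isomorphism by the open mapping theorem. Consequently it is enough to exhibit a bounded left inverse $\tau:\BI\to\mathcal{M}$; then $\Pi:=D\pi_1\circ\tau$ is a bounded projection of $\BI$ onto $T_P$.

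To build $\tau$ I exploit $\mathrm{(iii)}$. Let $p_{i_0}$ be the unique infinite-rank projection in $\{p_i\}_0^w$; every other $p_j$ then lies in $\mathfrak{F}\subset\I$. A direct computation with $X=[L_z,P]$ and $z\in\mathcal{M}$ gives $X(p_j)=(1-p_j)zp_j$ for $j\geq 1$, which reads off every off-diagonal block $p_izp_j$ with $j\geq 1$, $j\neq i_0$, $i\neq j$. The skew-hermitian relation $p_izp_j=-(p_jzp_i)^*$ then fills in the remaining blocks with the single possible exception of $p_0zp_{i_0}$ (which matters only when $i_0\geq 1$ and $p_0\neq 0$); this last block I recover from $p_0X(\xi\otimes\eta)$ for a fixed unit vector $\eta\in R(p_{i_0})$, via the identity $X(\xi\otimes\eta)=[z,p_{i_0}]\xi\otimes\eta$ and the vanishing $p_0p_{i_0}=0$, which together yield $p_0X(\xi\otimes\eta)=(p_0zp_{i_0})\xi\otimes\eta$. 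Assembling the reconstructed blocks and antisymmetrizing to enforce skew-hermiticity and the zero-diagonal condition defines $\tau(X)\in\mathcal{M}$. The main obstacle will be verifying that this patchwork $\tau$ is bounded from $\BI$ into $(\mathcal{M},\|\cdot\|_\I)$ and really left-inverts $D\pi_1$: the boundedness reduces to the standard estimate $\|X(y)\|_\I\leq\|X\|_{\BI}\|y\|_\I$ applied to finitely many test operators (this is where $w<\infty$ enters crucially, together with the finite rank of $p_0$ in the exceptional case), and the left-inverse identity is checked block by block on the image of $D\pi_1$.
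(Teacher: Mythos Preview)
Your proposal is correct and follows essentially the same approach as the paper: the equivalences (i)$\Leftrightarrow$(ii)$\Leftrightarrow$(iii) come from Lemmas~\ref{tgte cerrado} and~\ref{topology}, (iv)$\Rightarrow$(ii) from Proposition~\ref{sous varietes}, and (iii)$\Rightarrow$(iv) by building an explicit bounded projection onto $(T\UIP)_P$ that reads the off-diagonal blocks $p_izp_j$ from $X(p_j)$ for the finite-rank $p_j$ and handles the residual $p_0$--$p_{i_0}$ block (when $i_0\neq 0$) via rank-one test operators, using that $\rank(p_0)<\infty$. The paper packages this as a map $\hat z:\BI\to\I_{sh}$ with $E(X)=[L_{\hat z(X)},P]$ (splitting into the cases $i_0=0$ and $i_0\geq 1$), whereas you phrase it as a left inverse $\tau:\BI\to\mathcal M$ of the isomorphism $D\pi_1$, but the underlying construction is the same.
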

\begin{proof}
Suppose that $\UI(P)$ is a  submanifold of $\BI$. By Proposition \ref{sous varietes}, tangent spaces of $\UIP$ has to be closed in $\BI$. From Lemma \ref{tgte cerrado} it follows that the family $\{ \,  p_i \, \}_0 ^w$ satisfies the stated properties.

 Now we assume that  $w < \infty$ and  there is only one infinite rank projection in the family $\{ \,  p_i \, \}_0 ^w$. According to  Lemma \ref{tgte cerrado} and  Lemma \ref{topology}, what is left to prove is that  tangent spaces are complemented in   $\BI$. Clearly, it suffices  to show that $(T\UIP)_P$ is complemented in $\BI$.

  We will divide the proof into two cases according to whether the rank of $p_0$ is infinite or finite. Let us first assume that  $\rank(p_0)=\infty$, so that $\rank(p_i)<\infty$ for all $i=1, \ldots , w$. Then $X(p_i)$ is well defined for any $X \in \BI$, $i=1, \ldots, w$, and we can set
\[ \hat{z}: \BI \longrightarrow \I_{sh}, \, \, \, \, \, \, \hat{z}(X)= 2 i \Im m \bigg( \sum_{i=1}^{w} \sum_{j=0}^{i-1} p_j X(p_i) \bigg).  \]
Clearly $\hat{z}$ is a continuous linear operator. Then we define a  bounded linear  projection onto the tangent space by $$E: \BI \longrightarrow (T\UI(P))_P, \, \, \, \, \, \, \, \,  E(X)=[L_{\hat{z}(X)}  ,P].$$ 
In order to show that $E$ actually defines a projection we pick  $X=[L_z,P]$ for some $z \in \I_{sh}$. Notice that $X(p_i)=(1-p_i)zp_i$,  for all $i=1, \ldots, w$, then  we get that 
\[ \hat{z}(X)=  2 i \Im m \bigg(\sum_{i=1}^{w} \sum_{j=0}^{i-1} p_j z p_i \bigg)= z - \sum_{i=0}^np_izp_i.  \]  
From Lemma \ref{calc isotropia} we deduce that $E(X)=[L_{\hat{z}(X)},P]=X$, which proves that $E$ is a projection.  Finally,  the continuity of $\hat{z}$ easily implies that of $E$.

Now we consider the case in which the infinite rank projection is not $p_0$. Without loss of generality we may assume that $\rank(p_1)=\infty$.   Let us point out that the above definition of the operator $\hat{z}(X)$ does not work  in this case for two different reasons: on one hand,  since $p_1 \notin \I$ we cannot evaluate any $X \in \BI$ at $p_1$, and on the other hand,  every tangent vector $[L_z,P]$ vanishes at $p_0$. 

In order to solve this case we need to modify the definition of the operator $\hat{z}$. Recall that $\rank(p_0)<\infty$ since $\rank(p_1)=\infty$. 
Let $\eta_1 , \ldots , \eta_m$ be an orthonormal basis of $R(p_0)$. Let $\xi \in R(p_1)$ be a unit vector.   Then we define
\[ \hat{z}:\BI \longrightarrow \I_{sh}, \, \, \, \, \, \, \hat{z}(X)= 2i \Im m\bigg( \sum_{i=2}^{w} \sum_{j=0}^{i-1} p_j X(p_i)  -  \sum_{k=1}^m X(\eta_k \otimes \xi) \xi \otimes \eta_k   \bigg), \]   
and the projection onto the tangent space is 
$$E: \BI \longrightarrow (T\UI(P))_P, \, \, \, \, \, \, \, \,  E(X)=[L_{\hat{z}(X)}  ,P].$$ 
It is apparent that $E$ is continuous, so we are left with the task of proving that $E$ is a projection. To this end, let $X=[L_z,P]$ for some $z \in \I_{sh}$. Note that
\[  X(\eta_k \otimes \xi)=\sum_{i=1}^w (zp_i- p_iz)(\eta_k \otimes \xi)p_i=(zp_1 -p_1z)(\eta_k \otimes \xi)p_1=-p_1z(\eta_k \otimes \xi),\]
and then
\[  \sum_{k=1}^m X(\eta_k \otimes \xi)\xi \otimes \eta_k =-p_1zp_0.\]
Thus we get
\[ \hat{z}(X)= 2 i \Im m \bigg(   \sum_{i=2}^{w} \sum_{j=0}^{i-1} p_j z p_i  + p_1zp_0  \bigg)=z-\sum_{i=0}^np_izp_i. \]
Hence we conclude that $E([L_z,P])=[L_z,P]$, and the proof is complete.
\end{proof}

\subsection{When  is  $\UK(P)$ a submanifold of $\BK$? }

In this section we turn to the case $\I=\k$. The following estimate is a somewhat improved version of Lemma \ref{desigualdad}.

\begin{lemma}\label{des ctos}
Let $x \in \k$ such that $p_ixp_i=0$ for all $i \geq 1$. Then
\[   \| L_xP-PL_x\|_{\BK} \geq \| x(1-p_0) \|, \]
where $p_0=1-\sum_{i=1}^w p_i$.
\end{lemma}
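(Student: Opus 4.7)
My plan is to strengthen Lemma~\ref{desigualdad} by replacing the rank-one test operator used there with a well-chosen finite-rank projection adapted to the block decomposition $\{p_i\}_{i\ge 0}$. A rank-one probe $\xi\otimes\eta$ with $\xi\in R(p_j)$, $\eta\in R(p_i)$ only detects a single off-diagonal block $\|p_ixp_j\|$, whereas $\|x(1-p_0)\|=\|\sum_{i\ge 1}xp_i\|$ reflects an entire ``column'' of the matricial decomposition, so a richer probe is needed.

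If $x(1-p_0)=0$ the inequality is trivial, so I assume otherwise. Since $x(1-p_0)$ is compact, there is a unit vector $v\in R(1-p_0)$ with $\|xv\|=\|x(1-p_0)\|$. I decompose $v=\sum_{i\ge 1}p_iv$ in norm (noting that $v=(1-p_0)v$), set $J=\{i\ge 1:p_iv\ne 0\}$, and for $i\in J$ put $f_i=p_iv/\|p_iv\|\in R(p_i)$; the family $\{f_i\}_{i\in J}$ is orthonormal. For a finite $F\subseteq J$, the test operator I will use is the finite-rank projection $q_F=\sum_{i\in F}f_i\otimes f_i\in\k$, which satisfies $\|q_F\|=1$.

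The crucial observation is the identity $(L_xP-PL_x)(q_F)=xq_F$. The block structure of $q_F$ (each $f_i$ lies in the single block $R(p_i)$) forces $P(q_F)=\sum_{j=1}^w\sum_{i\in F}(p_jf_i)\otimes(p_jf_i)=q_F$, so $L_xP(q_F)=xq_F$. On the other hand, $PL_x(q_F)=\sum_{i\in F}(p_ixf_i)\otimes f_i$, and since $f_i\in R(p_i)$, the hypothesis $p_ixp_i=0$ makes every summand vanish. Therefore $\|L_xP-PL_x\|_{\BK}\ge\|xq_F\|$.

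Finally, setting $v_F=\sum_{i\in F}p_iv\in R(q_F)$ gives $\|xq_F\|\ge\|xv_F\|/\|v_F\|$, and letting $F\nearrow J$ yields $v_F\to v$ in norm, so $\|xv_F\|/\|v_F\|\to\|xv\|=\|x(1-p_0)\|$, which closes the argument. The main obstacle, I expect, is recognizing that a rank-one probe is structurally inadequate for a ``column'' estimate; once one sees that a block-diagonal projection $q_F$ is simultaneously fixed by $P$ (so $L_xP$ acts on it as left multiplication by $x$) and sends $xq_F$ to zero under $P$ (by the hypothesis $p_ixp_i=0$), the remaining compactness and approximation step is routine.
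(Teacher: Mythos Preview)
Your proof is correct and rests on the same key identity as the paper's: for any block-diagonal finite-rank projection $e$ with $p_iep_i=e p_i$ supported in the blocks $i\ge 1$, one has $P(e)=e$ and, by the hypothesis $p_ixp_i=0$, $P(xe)=0$, so $(L_xP-PL_x)(e)=xe$ and hence $\|L_xP-PL_x\|_{\BK}\ge \|xe\|$. The difference is only in the choice of $e$ and the limit step: the paper takes $e=e_k=\sum_{i}p_{i,k}$ with $p_{i,k}\nearrow p_i$ and uses compactness of $x$ to get $\|xe_k\|\to\|x(1-p_0)\|$ directly, while you first pick a norm-attaining unit vector $v$ for the compact operator $x(1-p_0)$ and build a rank-$|F|$ probe $q_F$ adapted to $v$, then let $v_F\to v$. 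The paper's route is slightly leaner (no need to locate $v$), whereas yours uses smaller-rank test operators; both exploit compactness in the passage to the limit and are equally valid.
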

\begin{proof}
To estimate the norm of $L_xP-PL_x$ as an operator acting on $\k$ we  need to consider the following projections: if $\rank(p_i)=\infty$, let $(p_{i,k})_{k}$ be a sequence of finite rank projections satisfying $p_{i,k} \leq p_i$ and $p_{i,k} \nearrow p_i$, and if $\rank(p_i)<\infty$,  we set $p_{i,k}=p_i$ for all $k \geq 1$. Now assume that the pinching operator $P$ is associated with a family $\{ \, p_i  \, \}_1 ^w$ such that $w < \infty$. Then the projections given by $e_k=\sum_{i=1}^n p_{i,k}$ have finite rank. We thus get
  \begin{align*}
\| L_x P -PL_x\|_{\BK} & \geq \|  (L_x P -PL_x )(e_k) \| 
 = \bigg\| \sum_{i=1}^w (1-p_i)xp_{i,k} \bigg\| 
= \bigg\| x \sum_{i=1}^w p_{i,k} \bigg\|,
\end{align*}
where in the last equality we use that $p_ixp_i=0$. Using that $ x \in \k$ and $p_{i,k} \nearrow p_i$, we find that
\[ \| L_x P -PL_x\|_{\BK} \geq \| x(1-p_0) \|. \]  
In the case where $w=\infty$, we set $e_{n,k}=\sum_{i=1}^n p_{i,k}$. In the same fashion as above we find that
\[  \| L_x P -PL_x\|_{\BK}  \geq  \bigg\| x \sum_{i=1}^n p_{i,k} \bigg\| \] 
Letting $k \to \infty$, we have 
\[ \| L_x P -PL_x\|_{\BK}  \geq  \bigg\| x \sum_{i=1}^n p_{i}\bigg\|, \]
for all $n \geq 1$. Now letting $n \to \infty$,  we get the estimate in this case.  
\end{proof}


\begin{proposition}\label{tengent}
Tangent spaces of $\UK(P)$ are closed in $\BK$.
\end{proposition}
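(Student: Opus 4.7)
The plan is to mimic the structure of Lemma~\ref{tgte cerrado}, but exploit the crucial feature that in the compact case we no longer need the limit to live in a proper ideal: norm-closedness of $\k$ in $\B$ will suffice, and Lemma~\ref{des ctos} just proved gives exactly the quantitative estimate we need in operator norm. As in the ideal case, it is enough to treat the tangent space at $P$, since $(T\UK(P))_Q = L_u (T\UK(P))_P L_{u^*}$ whenever $Q = L_u P L_{u^*}$.

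So first I would take a sequence $(z_k)_k \subseteq \k_{sh}$ with $[L_{z_k},P]$ Cauchy in $\BK$, convergent to some $X \in \BK$. By Lemma~\ref{calc isotropia} the map $z \mapsto [L_z,P]$ vanishes on the block-diagonal part, so replacing $z_k$ by $z_k - \sum_{i \geq 0} p_i z_k p_i$ we may assume $p_i z_k p_i = 0$ for every $i \geq 0$. This puts the sequence in the hypothesis of Lemma~\ref{des ctos}, applied to the differences $z_k - z_r$, yielding
\[
\| (z_k - z_r)(1-p_0) \| \leq \| [L_{z_k - z_r}, P]\|_{\BK} \xrightarrow[k,r\to\infty]{} 0.
\]
Hence $z_k(1-p_0)$ is Cauchy in the uniform norm; since each $z_k(1-p_0)$ is compact and $\k$ is norm-closed in $\B$, there exists $y \in \k$ with $z_k(1-p_0) \to y$.

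The second step is to recover the full $z_k$. Taking adjoints and using skew-hermiticity gives $(1-p_0) z_k = -\bigl(z_k(1-p_0)\bigr)^{*} \to -y^{*}$ in operator norm. Because the diagonal block $p_0 z_k p_0$ vanishes, one has $z_k p_0 = (1-p_0) z_k p_0 \to -y^{*} p_0$. Summing the two pieces,
\[
z_k \;=\; z_k(1-p_0) + z_k p_0 \;\xrightarrow[k\to\infty]{}\; y - y^{*} p_0 \;=:\; z,
\]
in the uniform norm. Then $z \in \k$, and $z^{*} = -z$ follows automatically as the uniform limit of skew-hermitian operators, so $z \in \k_{sh}$. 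Finally, from $\|L_{z_k} - L_z\|_{\BK} = \|z_k - z\| \to 0$ we conclude $[L_{z_k},P] \to [L_z, P]$, whence $X = [L_z,P]$ lies in $(T\UK(P))_P$.

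The main obstacle is the one already identified: Lemma~\ref{des ctos} only controls the part of $z_k$ that meets $1-p_0$ and gives no information on $z_k p_0$ directly; in the ideal setting this was circumvented by the blockwise Cauchy estimates of Lemma~\ref{desigualdad} together with a finite-rank bound, which is unavailable here. The point of the argument is that the operator norm together with skew-hermiticity and the normalization $p_i z_k p_i = 0$ are enough to pass control from $z_k(1-p_0)$ to $(1-p_0) z_k$ by adjunction, and the vanishing diagonal block $p_0 z_k p_0 = 0$ then forces $z_k p_0 = (1-p_0) z_k p_0$, closing the loop without needing any additional structural hypothesis on the family $\{p_i\}$. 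This is the reason why, unlike in Theorem~\ref{complemento}, closedness of the tangent spaces holds unconditionally in the compact case.
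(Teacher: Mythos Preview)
Your proof is correct and follows essentially the same approach as the paper: both reduce to the tangent space at $P$, normalize so that $p_i z_k p_i = 0$, use Lemma~\ref{des ctos} to control $(z_k - z_r)(1-p_0)$, and then recover the $p_0$-part via skew-hermiticity together with $p_0 z_k p_0 = 0$. The only cosmetic difference is that the paper bounds $\|(z_k - z_r)p_0\|$ directly and concludes $(z_k)_k$ is Cauchy, whereas you pass to the limit $y$ of $z_k(1-p_0)$ first and then assemble $z = y - y^* p_0$ explicitly.
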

\begin{proof}
By the remark at the beginning of the proof of Lemma \ref{tgte cerrado},  we may restrict, without loss of generality, to verify the statement for the tangent space at $P$. Let $(z_k)_k$ be a sequence in $\k_{sh}$ such that $p_iz_kp_i=0$ for all $i \geq 0$ and $k \geq 1$. Suppose that $ \| \, [ L_{z_k} , P ] - X \,  \|_{\BK} \rightarrow 0$ for some $X \in \BK$. According to Lemma \ref{des ctos}, 
\[  \|(z_k - z_r) (1-p_0) \| \leq \| \, [L_{z_k - z_r} ,P]  \, \|_{\BK}.     \]
Also note that 
\[  \| (z_k - z_r) p_0 \| = \| p_0(z_k - z_r)\| = \| p_0 (z_k - z_r) (1- p_0) \| \leq  \| \, [L_{z_k - z_r} ,P]  \, \|_{\BK}.         \] 
Therefore $(z_k)_k$ is a Cauchy sequence and thus has a limit $z_0 \in \k_{sh}$. Then we see that 
\[ \| \, [L_{z_k},P] - [L_{z_0},P]   \, \| \leq 2 \| z_k - z_0 \| \rightarrow 0.   \]
Thus we conclude that $X = [L_{z_0},P]$.
\end{proof}

\noi Now we turn to the study of the topology of $\UK(P)$. We will find that the quotient topology and the topology inherited from $\BK$ coincide regardless the number or rank of the projections in the family $\{  \,  p_i \, \}_0 ^w$.

\medskip

\begin{remarknf}\label{f0 cont}
Let $P$ be the pinching operator associated with a family $\{ \, p_i \, \}_1 ^w$ $(1 \leq i \leq w)$. In this subsection we need to   consider again the unitary orbit of the projections,  which we denote by 
\[   \o_i = \{ \, up_iu^* \, : \, u \in \UK \, \}. \] 
for $i=0, \ldots , w$.  We claim that the map
 \[ F_0: \UI(P) \longrightarrow \o_i, \, \, \, \, \, \, \, F_0(L_uPL_{u^*})=up_0u^*.   \]
is Lipschitz. In fact, according to Lemma \ref{des ctos} applied with $x=u-1 - \sum_{i=0}^wp_i(u-1)p_i=u - \sum_{i=0}^wp_iup_i$ we have that
\[ \| p_0 u (1-p_0)\|=   \bigg\| p_0\bigg(u - \sum_{i=0}^w p_iup_i\bigg)(1-p_0)  \bigg \| \leq \bigg \| \bigg(u - \sum_{i=0}^w p_iup_i \bigg)(1-p_0) \bigg\| \leq \| L_uPL_{u^*} - P \|_{\BK}\,.   \]
Replacing $u$ by $u^*$ we find that
\[  \| (1-p_0)up_0\|= \|p_0 u^* (1-p_0)\| \leq  \| L_uPL_{u^*} - P \|_{\BK}.   \]
Thus we get
\begin{align*} 
\| F_0(L_{u}PL_{u^*}) - F_0(P) \| & = \| up_0u^* - p_0 \|  \\
& \leq \| (1-p_0)up_0\| + \| p_0 u (1-p_0)\| \leq 2 \|L_uPL_{u^*}  - P \|, 
\end{align*}
which proves our claim.
\end{remarknf}

\medskip

\begin{lemma}\label{des top inf}
 Let $u,v \in \UK$. Then
\[   \bigg\| \sum_{i=0}^w up_iu^*p_i - vp_iv^*p_i   \bigg\| \leq 3 \|L_uPL_{u^*} - L_v P L_{v^*}  \|_{\BK},\]
where in  the case in which $w=\infty$ the series on the left side is convergent in the uniform norm.
\end{lemma}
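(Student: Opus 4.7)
Set $Q_i:=up_iu^*$, $R_i:=vp_iv^*$, and $\alpha:=\|L_uPL_{u^*}-L_vPL_{v^*}\|_{\BK}$. The plan is to split the sum at the index $0$, bounding $\|\sum_{i=1}^{w}(Q_i-R_i)p_i\|\leq\alpha$ and $\|(Q_0-R_0)p_0\|\leq 2\alpha$ separately, so that the factor $3$ emerges from the triangle inequality.

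For the block $i\geq 1$, the idea is to probe $L_uPL_{u^*}-L_vPL_{v^*}$ with finite-rank test elements approximating $\sum_{i\geq 1}p_i$. Pick finite-rank subprojections $p_{i,k}\leq p_i$ with $p_{i,k}\nearrow p_i$ strongly and set $y_{n,k}:=\sum_{i=1}^{n}p_{i,k}$, a projection in $\k$ of norm one; the orthogonality $p_{j,k}p_i=\delta_{ij}p_{i,k}$ yields
\[ (L_uPL_{u^*}-L_vPL_{v^*})(y_{n,k})=\sum_{i=1}^{n}(Q_i-R_i)p_{i,k}, \]
a quantity of uniform norm at most $\alpha$. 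Each $Q_i-R_i=(u-v)p_iu^*+vp_i(u^*-v^*)$ is compact, so multiplying by the bounded self-adjoint strongly null sequence $p_i-p_{i,k}$ yields $(Q_i-R_i)p_{i,k}\to(Q_i-R_i)p_i$ in uniform norm; letting $k\to\infty$ gives $\|\sum_{i=1}^{n}(Q_i-R_i)p_i\|\leq\alpha$ for every $n$.

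When $w=\infty$, I also need uniform-norm convergence of the full series. Rewriting the partial sum as
\[ (u-v)\sum_{i=1}^{n}p_iu^*p_i+v\sum_{i=1}^{n}p_i(u^*-v^*)p_i \]
and splitting $p_iu^*p_i=p_i(u^*-1)p_i+p_i$, each ingredient is treatable: the pinchings of the compact operators $u^*-1$ and $u^*-v^*$ converge in uniform norm to $P(u^*-1)$ and $P(u^*-v^*)$, and $(u-v)\sum_{i=1}^{n}p_i$ converges in uniform norm to $(u-v)(1-p_0)$ because the compact operator $u-v$ multiplied by a bounded self-adjoint strongly convergent sequence converges in norm. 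Passing to the limit inherits the bound $\alpha$ from the finite case.

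For the $i=0$ term, I reduce to Remark \ref{f0 cont} via a unitary translation. Setting $a:=v^*u\in\UK$ and using that $L_v$ is an isometry on $\k$, one has $L_aPL_{a^*}-P=L_{v^*}(L_uPL_{u^*}-L_vPL_{v^*})L_v$, so $\|L_aPL_{a^*}-P\|_{\BK}=\alpha$; Remark \ref{f0 cont} applied to $a$ then yields $\|ap_0a^*-p_0\|\leq 2\alpha$, and conjugation by the unitary $v$ gives $\|up_0u^*-vp_0v^*\|\leq 2\alpha$, hence $\|(Q_0-R_0)p_0\|\leq 2\alpha$. The main technical obstacle in this plan is the uniform-norm convergence of the infinite series in the presence of infinite-rank projections, which is handled by the decomposition $Q_i-R_i=(u-v)p_iu^*+vp_i(u^*-v^*)$ exhibiting $Q_i-R_i$ as a sum of products of a compact operator by a bounded one.
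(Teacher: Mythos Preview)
Your proposal is correct and follows essentially the same approach as the paper: both split the sum at the index $0$, probe $L_uPL_{u^*}-L_vPL_{v^*}$ with the finite-rank projections $\sum_{i=1}^n p_{i,k}$ to bound the $i\geq 1$ part by $\alpha$, invoke Remark~\ref{f0 cont} (via a unitary translation, which you make explicit and the paper leaves implicit through the isometry of the action) to bound the $i=0$ part by $2\alpha$, and handle $w=\infty$ by an equivalent decomposition exhibiting each summand as compact-times-bounded so that the partial sums converge in uniform norm.
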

\begin{proof}
For each $i \geq 1$, let $(p_{i,k})_k$ be a sequence of finite rank projections such that $p_{i,k}\leq p_i$ and $p_{i,k} \nearrow  p_i$. In case  $p_i$ has finite rank, we set $p_{i,k}=p_i$ for all $k$.   We will use the orthogonal projections defined by   $e_{k}=\sum_{i=1}^np_{i,k}$.  Put $a(u,v):=\|L_uPL_{u^*} - L_v P L_{v^*}  \|_{\BK}$. Then 
\[  \bigg\| \sum_{i=1}^w (up_iu^* - vp_iv^*)p_{i,k}   \bigg\| = \|(L_uPL_{u^*} - L_v P L_{v^*} )(e_{k})\|\leq a(u,v).     \]
Note that for each $i\geq1$, the operator $up_iu^* - vp_iv^*$ is compact. Letting $k \to \infty$, we get that
\[  \bigg\| \sum_{i=1}^w (up_iu^* - vp_iv^*)p_{i}   \bigg\| \leq a(u,v). \]
Combining this with the Remark \ref{f0 cont} it gives that
\begin{equation}\label{caso finito 1} 
 \bigg\| \sum_{i=0}^w (up_iu^* - vp_iv^*)p_{i}   \bigg\| \leq 3 a(u,v). 
\end{equation} 
This finishes the proof for the case  $w < \infty$. If $w = \infty$, we note that
\[  \sum_{i=0}^{\infty} up_iu^*p_i - vp_iv^*p_i= \sum_{i=0}^{\infty} u p_i(u^*-1)p_i    - vp_i(v^*-1)p_i + (u-v)p_i.    \]
Since the operators $u^*-1$, $v^*-1$ and $u-v$ are compact,  this series  converges in the uniform norm.  Letting $w \to \infty$ in (\ref{caso finito 1}),  the desired inequality follows.  
\end{proof}

\noi In the following proposition we extend the technique developed in \cite{AL8} to construct continuous local cross sections.

\begin{proposition}\label{topology ctoss}
The map 
\[ \pi: \UK \longrightarrow \UK(P) \subseteq \BK, \, \, \, \, \, \, \pi(u)=L_uPL_{u^*},  \]
has continuous local cross sections, when $\UK(P)$ is considered with the topology inherited from $\BK$.
\end{proposition}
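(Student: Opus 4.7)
The plan is to construct an explicit continuous section over a neighborhood of $P$ by a joint polar decomposition; translation by the $\UK$-action will then give sections at all other points. For $u \in \UK$, set $q_i = up_iu^*$ and consider
\[
T(u) = \sum_{i=0}^{w} q_i p_i.
\]
By the argument used in the proof of Lemma \ref{des top inf}, this series converges in the uniform norm, so $T(u) \in \B$. Moreover, by Lemma \ref{calc isotropia}, the projections $q_i$ depend only on $Q = L_uPL_{u^*}$ (not on the choice of $u$), so we obtain a well-defined operator $T(Q)$ on $\UK(P)$. Specializing Lemma \ref{des top inf} to $v = 1$ yields the Lipschitz estimate
\[
\|T(Q) - 1\| \leq 3\,\|Q - P\|_{\BK},
\]
so on the open neighborhood $\mathcal{V} = \{Q \in \UK(P) : \|Q - P\|_{\BK} < 1/3\}$ of $P$, the operator $T(Q)$ is invertible.

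Using $q_iq_j = \delta_{ij} q_i$, a direct computation yields the intertwining identities $T(Q)\, p_i = q_ip_i = q_i\, T(Q)$ for every $i$. Consequently $T(Q)^*T(Q) = \sum_i p_iq_ip_i$ commutes with each $p_i$, and so does $|T(Q)| = (T(Q)^*T(Q))^{1/2}$. Define
\[
\sigma(Q) = T(Q)\,|T(Q)|^{-1},\qquad Q \in \mathcal{V}.
\]
Then $\sigma(Q)$ is the unitary part of the polar decomposition of $T(Q)$, and the commutation of $|T(Q)|$ with the $p_i$ yields $\sigma(Q)\, p_i\, \sigma(Q)^* = q_i$ for every $i$. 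This immediately gives $L_{\sigma(Q)} P L_{\sigma(Q)^*} = Q$, so $\sigma$ is a right inverse of $\pi$ on $\mathcal{V}$. This is the natural generalization, to arbitrary $w$, of the classical two-projection construction using $qp + (1-q)(1-p)$ employed in \cite{AL8}.

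It remains to verify that $\sigma$ takes values in $\UK$ and is continuous. Each summand $(q_i-p_i)p_i$ is compact and the series for $T(Q) - 1$ converges in uniform norm, so $T(Q) - 1 \in \k$; continuous functional calculus then yields $|T(Q)|^{-1} - 1 \in \k$, and hence $\sigma(Q) - 1 \in \k$, i.e. $\sigma(Q) \in \UK$. Continuity of $\sigma$ follows from the Lipschitz bound above together with the continuity of inversion and of the polar-decomposition map near $1$ in the operator norm, which is exactly the norm topology of $\UK$. The conceptually delicate point is the very fact that $T(Q)$ defines a bounded operator when $w = \infty$ and that the intertwining/commutation properties survive the passage to $|T(Q)|^{-1}$; the former is supplied by the uniform-norm convergence established in Lemma \ref{des top inf}, while the latter reduces to the observation that $T(Q)^*T(Q)$ is block-diagonal with respect to $\{p_i\}_0^w$.
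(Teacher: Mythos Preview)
Your argument is correct and follows essentially the same route as the paper's: form $s=\sum_{i}q_ip_i$, show via Lemma~\ref{des top inf} that $s$ is invertible and a compact perturbation of the identity on the ball $\mathcal{V}$, take the unitary part $\sigma=s|s|^{-1}$, and verify $\sigma p_i\sigma^{*}=q_i$ using the intertwining $sp_i=q_is$. One minor imprecision: the series $\sum_i q_ip_i$ itself converges only in the strong operator topology (since $\sum_i p_i=1$ does), not in the uniform norm; what converges in uniform norm is $\sum_i (q_i-p_i)p_i=T(Q)-1$, exactly as you use when showing $T(Q)-1\in\k$.
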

\begin{proof}
Let $P$ be the pinching operator associated with a family $\{   \,  p_i \, \}_1 ^w$ $(1\leq w \leq \infty$).  Since the action of $\UK$ is isometric it will be enough to find a continuous section $ \sigma$  in a neighborhood of $P$. Also we will restrict ourselves to prove the case  $w=\infty$. The case $w<\infty$ needs less care, and it  can be handled  in much the same fashion.

 We consider the following neighborhood of $P$ to define the cross section, 
\[  \mathcal{V}:=\big\{  \,  Q \in \UK(P)  \, : \,   \| Q - P \|_{\BK} < 1/3   \,  \big\} . \]  
Given  $Q=L_uPL_{u^*} \in \mathcal{V}$, where $u \in \UK$, let $q_i=F_i(Q)=up_iu^*$  for $i \geq 0$. According to the proof of Lemma \ref{cont fes} the function $F_i$ is well defined.  Then,  we 
set 
$$ s=s(Q):= \sum_{i=0}^{\infty} q_i p_i \,.$$
This  series is convergent in the strong operator topology. In fact, we can rewrite the series as
\[  \sum_{i=0}^{\infty} q_i p_i= \sum_{i=0}^{\infty} up_i(u^*-1)p_i + (u-1)p_i + p_i ,\]
where the first and second summand on the right are convergent in the uniform norm, while the third is convergent in the strong operator topology.
 On the other hand, note that 
\[   \| s - 1\| \leq 3 \| Q - P \|_{\BK} < 1. \]
Then we get that $s$ is invertible. Moreover, it follows that 
\[  s-1 = u \bigg( \sum_{i=0}^{\infty} p_i (u^* -1 )p_i + 1 \bigg) - 1=u\sum_{i=0}^{\infty} p_i (u^* -1 )p_i + u-1 \in \k, \]
which is due to the fact that $\sum_{i=0}^{\infty} p_i (u^* -1 )p_i \in \k$. Now we will show that $$\sigma= \sigma(Q):=s|s|^{-1}$$ is a  continuous local cross section for $\pi$. To this end, note that $sp_i=q_ip_i=q_is$, so that $p_i |s|^{2}=s^*q_is=|s|^2p_i$, which   implies  
\[   \sigma p_i \sigma^{*}= s|s|^{-1} p_i |s|^{-1}s  =s p_i |s|^{-2} s^* =sp_is^{-1}=q_i. \]
This allows us to prove that $\sigma$ is a section: for any $y \in \k$, we have
\[  L_{\sigma} P L_{\sigma^*}(y)=\sum_{i=1} ^{\infty}  \sigma p_i \sigma^{*} yp_i = \sum_{i=1} ^{\infty}  q_i yp_i= Q(y).    \]
On the other hand, we have $|s|^2 -1 \in \k$, and consequently, $|s|-1=(|s|^2 - 1)(|s| + 1)^{-1} \in \k$. Therefore we can conclude 
\[  \sigma - 1 = s|s|^{-1} -1=(s -|s|)|s|^{-1}= (s-1)|s|^{-1} + (1-|s|)|s|^{-1} \in \k. \]
Hence  $\sigma \in \UK$. Let $Gl(\h)$ denote the group of invertible operators on $\h$. In order to prove the continuity of $\sigma$ we consider the subgroup of $Gl(\h)$ given by
\[    Gl_{\k}= \{ \,  g \in Gl(\h) \, : \, g -1 \in \k                    \,      \}.     \]
It is a Banach-Lie group endowed with the topology defined by $(g_1 , g_2) \mapsto \| g_1 - g_2\|$ (see \cite{B}). From Lemma \ref{des top inf} the map $s: \mathcal{V} \longrightarrow Gl_{\k}$ is continuous. Also note that the map  $Gl_{\k} \longrightarrow \UK$,  $s \mapsto s|s|^{-1}$, is real analytic by the regularity properties of the Riesz functional calculus. Thus $\sigma$ is continuous, being the composition of continuous maps.    
\end{proof}

Our next task in the study of the  submanifold structure of $\UK(P)$ is to ask about the existence of a supplement for $(T\UI(P))_P$ in $\BK$. The existence of such supplement is closely related to the fact that for an infinite dimensional Hilbert space $\h$ the compact operators  are not complemented in $\B$. A proof of this result can be found, for instance, in \cite{con}.  It is based on the following well known result:  $c_0$ (sequences which converges to zero) is not complemented in $\ell^{\infty}$ (bounded sequences). The reader can find a proof of this latter fact in \cite{w}.

\medskip

\begin{remarknf}\label{rem complemnt}
We will need a slightly modified version of the afore-mentioned result.  We first note that $\k_{sh}$ is not complemented in $\B_{sh}$. Otherwise we would have a real bounded projection $E:\B_{sh} \longrightarrow \k_{sh}$, then we can define a bounded projection 
$\tilde{E}: \B \longrightarrow \k$, $\tilde{E}(x)= -iE (i\Re e (x)) +  i   E(i\Im m (x))$, a contradiction. 

Let $q_1,q_2$ two infinite rank orthogonal projections on $\h$. We claim that $q_1\k_{sh}q_2$ is not complemented in $q_1\B_{sh} q_2$. In fact, suppose that there exists a real bounded projection $E: q_1\B_{sh} q_2 \longrightarrow q_1\k_{sh}q_2$. Let $v$ a partial isometry on $\h$ such that $v^*v=q_1$ and $vv^*=q_2$. Then we have that  $L_v E L_{v^*}: \mathcal{B}(q_2(\h))_{sh} \longrightarrow q_2 \k_{sh} q_2$ is a bounded projection, which is impossible by the previous paragraph.
\end{remarknf}

\medskip

\noi In the following result we collect the 	above proved properties of $\UK(P)$ and we give a complete characterization of the submanifold structure.

\begin{theorem}\label{compact quasi}
Let $P$ be the pinching operator associated with a family $\{  p_i \}_1 ^{w}$ $(1\leq w \leq \infty)$. Then $\UK(P)$ is a quasi submanifold of $\BK$. Furthermore, $\UK(P)$ is a  submanifold of $\BK$ if and only if $w < \infty$ and  there is only one infinite rank projection in the family $\{ \,  p_i \,\}_0 ^w$. 
\end{theorem}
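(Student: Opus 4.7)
The plan is to split the argument into the quasi submanifold claim and the two directions of the submanifold characterization. For the quasi submanifold assertion I invoke Proposition \ref{sous varietes}: Proposition \ref{topology ctoss} supplies continuous local cross sections for $\pi: \UK \to \UK(P) \subseteq \BK$, so the quotient topology of $\UK(P) \simeq \UK/G$ agrees with the topology inherited from $\BK$, and Proposition \ref{tengent} shows that tangent spaces are closed in $\BK$. Both hypotheses of the submanifold criterion are therefore satisfied.

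For the ``if'' direction of the submanifold characterization, assume $w < \infty$ and that $\{p_i\}_0^w$ contains exactly one infinite rank projection. It suffices to exhibit a bounded real linear projection $E: \BK \to (T\UK(P))_P$. The explicit formulas for $\hat{z}(X)$ from the two subcases in the proof of Theorem \ref{complemento} only involve evaluations of $X$ at finite rank projections and at rank-one compact operators, so the output $\hat{z}(X)$ belongs to $\k_{sh}$ in either subcase, and $E(X) := [L_{\hat{z}(X)}, P]$ is a bounded real linear projection onto $(T\UK(P))_P$ by the same verification as in Theorem \ref{complemento}.

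For the ``only if'' direction I argue by contraposition, assuming that either (a) $w = \infty$, or (b) $\{p_i\}_0^w$ contains two distinct infinite rank projections $p_i, p_j$ with $i \geq 1$ (by symmetry), and deriving a contradiction from the existence of a bounded real linear projection $E: \BK \to (T\UK(P))_P$. In case (b), fix a unit vector $\eta_0 \in R(p_j)$ when $j \geq 1$ and define the bounded $(i,j)$-block evaluation $\varphi: \BK \to p_i \B p_j$ by $\varphi(X)\xi = p_i X(p_j \xi \otimes \eta_0)\eta_0$; the subcase $j = 0$ is treated by the mirror formula $\varphi(X)\eta = -p_i X(p_0 \eta \otimes \xi_0)\xi_0$ with $\xi_0 \in R(p_i)$. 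A direct computation using the action of $[L_z,P]$ on rank-one operators gives $\varphi([L_z, P]) = p_i z p_j$, and setting $Y'_b := L_{b-b^*} P - P L_{b-b^*}$ for $b \in p_i \B p_j$ yields a bounded real linear right inverse of $\varphi$ whose restriction to $p_i \k p_j$ lands in $(T\UK(P))_P$. Hence $\varphi \circ E \circ Y'$ is a bounded real linear projection of $p_i \B p_j$ onto $p_i \k p_j$, contradicting Remark \ref{rem complemnt}. In case (a), pick orthonormal unit vectors $\xi_n \in R(p_n)$ for $n \geq 1$ and set $\Phi: \ell^\infty \to \BK$ to be $\Phi((a_n)) = [L_z, P]$ with $z = \sum_n a_n(\xi_{n+1} \otimes \xi_n - \xi_n \otimes \xi_{n+1})$ (strongly convergent for bounded real sequences), together with $\Psi: \BK \to \ell^\infty$ defined by $\Psi(X)_k = \mathrm{Re}\,\langle X(\xi_k \otimes \xi_k)\xi_k, \xi_{k+1}\rangle$. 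One checks $\Psi \circ \Phi = \mathrm{id}$, $\Phi(c_0) \subseteq (T\UK(P))_P$, and, using that $\xi_k \rightharpoonup 0$ forces $z \xi_k \to 0$ in norm for every $z \in \k$, $\Psi((T\UK(P))_P) \subseteq c_0$; then $\Psi \circ E \circ \Phi$ is a bounded real linear projection of $\ell^\infty$ onto $c_0$, contradicting the classical non-complementation of $c_0$ in $\ell^\infty$. The main obstacle is case (b), where one must verify carefully that $\varphi \circ Y' = \mathrm{id}$ and that $Y'_b \in (T\UK(P))_P$ precisely when $b \in p_i \k p_j$, and also treat the subcase $j = 0$ separately because $P$ annihilates $\xi \otimes \eta_0$ whenever $\eta_0 \in R(p_0)$.
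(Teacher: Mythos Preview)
Your argument is correct. The quasi submanifold claim and the ``if'' direction match the paper's proof exactly, while in the ``only if'' direction you take a genuinely different route.

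The paper handles both cases of the ``only if'' direction with a single construction: it chooses projections $q_1,q_2$ (two of the $p_i$'s in the finite case, and $q_1=\sum_k p_{2k}$, $q_2=\sum_k p_{2k+1}$ in the $w=\infty$ case) and defines
\[
\tilde{E}(q_1xq_2)=(L_{q_1}E)\big([L_{q_1xq_2+q_2xq_1},P]\big)(q_2),
\]
then checks $\tilde{E}(q_1xq_2)=q_1zq_2$ whenever $E(\,\cdot\,)=[L_z,P]$, contradicting Remark~\ref{rem complemnt}. Your case~(b) is the same idea repackaged through the rank-one evaluation $\varphi(X)\xi=p_iX(p_j\xi\otimes\eta_0)\eta_0$; this has the virtue that $\varphi$ is manifestly a bounded map on $\BK$, whereas the paper's formula must be read as ``first write $E(\,\cdot\,)=[L_z,P]$, then compute $q_1zq_2$'', since $q_2\notin\k$ in the $w=\infty$ case. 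Your case~(a) is more substantially different: rather than building a projection of $q_1\B q_2$ onto $q_1\k q_2$, you go directly to the sequence-space obstruction, producing a bounded real linear projection of $\ell^\infty$ onto $c_0$ via the shift-type operator $z=\sum_n a_n(\xi_{n+1}\otimes\xi_n-\xi_n\otimes\xi_{n+1})$ and the functional $\Psi(X)_k=\mathrm{Re}\langle X(\xi_k\otimes\xi_k)\xi_k,\xi_{k+1}\rangle$. This bypasses Remark~\ref{rem complemnt} entirely and appeals straight to the classical fact (cited just before that remark) that $c_0$ is uncomplemented in $\ell^\infty$. The paper's approach is more uniform; yours is more elementary in case~(a) and avoids the slight interpretive issue with evaluating at $q_2$.
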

\begin{proof}
The  first statement about the quasi submanifold structure of $\UK(P)$ has already been proved in Proposition \ref{tengent}  and Proposition \ref{topology ctoss}. Assume that $w < \infty$ and  there is only one infinite rank projection in the family $\{ \,  p_i \,\}_0 ^w$. 
The same proof of Theorem \ref{complemento} can be carried out
to show that $(T\UK(P))_P$ is complemented in $\BK$.

Suppose now that $\UK(P)$ is a submanifold of $\BK$. According to Proposition \ref{sous varietes}, there is a bounded linear projection 
$E: \BK \longrightarrow (T\UK(P))_P.$
Two cases should be considered: first, that   there are two infinite rank projections in the family $\{ \,   p_i \, \}_0 ^w$, and second, that $w=\infty$. In the first case, let $q_1 \in \{  \, p_0 , p_1 , \ldots , p_w   \, \}$ be an infinite rank projection and $q_2 \in \{  \, p_1 , \ldots , p_w \, \} \setminus  \{\, q_1 \, \}$ be other infinite rank projection.   In the second case, we set $q_1=\sum_{k=0}^{\infty}p_{2k}$ and $q_2=\sum_{k=0}^{\infty}p_{2k +1}$.  In any case we define  the following bounded linear map
\[   \tilde{E}: q_1 \B_{sh} q_2 \longrightarrow q_1 \k_{sh} q_2 , \, \, \, \, \, \,  \,\tilde{E}(q_1xq_2)= (L_{q_1}  E )(\, [L_{q_1xq_2 + q_2 xq_1},P]\,)(q_2). \]

\noi We claim that $\tilde{E}$ is a projection onto $q_1 \k_{sh} q_2$. In fact, notice that for each $x \in  \B_{sh}$ there is $z \in \k_{sh}$ such that $E(\,[L_{q_1xq_2 + q_2xq_1},P] \,)=[L_z,P]$. In the case in which there are two infinite rank projections, note that
$$ \tilde{E}(q_1xq_2)=\displaystyle{q_1\sum_{i=1}^{w} (zp_i - p_iz)q_2p_i}=q_1(zq_2 -q_2z)q_2=q_1zq_2.$$
On the other hand, when $w=\infty$, 
$$ 
\tilde{E}(q_1xq_2)=\displaystyle{q_1\sum_{i=1}^{\infty} (zp_i - p_iz)q_2p_i} =q_1\displaystyle{\sum_{k=0}^{\infty}(zp_{2k +1}-p_{2k +1}z)p_{2k + 1}}=q_1z\displaystyle{\sum_{k=0}^{\infty}p_{2k+1}}=q_1zq_2.   
$$
This proves that the range of $\tilde{E}$ is contained in $p_1 \k_{sh}p_2$. Moreover, let $x \in \k_{sh}$, then we have that $E(\,[L_{q_1xq_2 + q_2xq_1},P] \,)=[L_{q_1xq_2 + q_2xq_1},P]$. We thus get that  $$\tilde{E}(q_1xq_2)=q_1(q_1xq_2 + q_2xq_1)q_2=q_1xq_2.$$ 
Hence $\tilde{E}$ is a continuous linear projection onto $q_1\k_{sh}q_2$. In other words, $q_1\k_{sh}q_2$ is complemented in $q_1\B_{sh}q_2$, but this contradicts Remark \ref{rem complemnt}.
\end{proof}

\section{Covering map}\label{OI}

For $u \in \UI$, consider the inner automorphism given by $Ad_u: \I \longrightarrow \I$, $Ad_u(x)=uxu^*$.   Given   a pinching operator $P$ associated with a family $\{ \, p_i \, \}_1^w$, there is another orbit of $P$ defined by
$$\o_{\I}(P):=\{ \, Ad_u  P  Ad_{u^*} \, : \, u \in \UI \, \}.$$
Note that all the operators in $\o_{\I}(P)$ are pinching operators while $P$ is the only  pinching operator in $\UIP$. 
The isotropy group of the  the co-adjoint action is given by
\begin{equation}\label{characterization}
  H=\{ \, u \in \UI \, : \, Ad_u P Ad_{u^*}= P \, \}. 
\end{equation}
\noi In order to find a characterization of the operators in $H$ we need  the following lemma. We make the convention $\{  \, 0, \, 1, \ldots , \infty \}=\NN_0$.

\begin{lemma}\label{pinching equality}
Let $P$ be the pinching operator associated with a family $\{  \, p_i  \, \}_1 ^w$ and $Q$ be the pinching operator associated with another family $\{  \, q_i  \, \}_1 ^v$. Then $P=Q$ if and only if $w=v$ and $p_i=q_{\sigma(i)}$ for some permutation $\sigma$ of $\{  \, 0, \ldots , w \}$ such that $\sigma(0)=0$.
\end{lemma}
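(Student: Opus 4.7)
The reverse implication is immediate: if $w=v$ and $p_i=q_{\sigma(i)}$ for a permutation $\sigma$ of $\{0,1,\ldots,w\}$ fixing $0$, then $\{p_i\}_1^w=\{q_j\}_1^v$ as sets of nonzero mutually orthogonal projections, so $P(x)=\sum_{i\geq 1}p_ixp_i=\sum_{j\geq 1}q_jxq_j=Q(x)$ for every $x\in\I$. So the real content is the forward direction.

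My plan for the forward direction is to probe $P=Q$ on rank-one projections living in the ranges of the $q_j$'s. Fix $j\geq 1$ and a unit vector $\xi\in R(q_j)$, and let $x=\xi\otimes\xi\in\mathfrak{F}\subseteq\I$. A direct calculation gives $p_i(\xi\otimes\xi)p_i=(p_i\xi)\otimes(p_i\xi)$ and $q_j(\xi\otimes\xi)q_j=\xi\otimes\xi$, so the identity $P(x)=Q(x)$ reads
\[
\sum_{i=1}^{w}(p_i\xi)\otimes(p_i\xi)=\xi\otimes\xi,
\]
the series converging in uniform norm (and, by mutual orthogonality of the $p_i\xi$'s, $\sum_i\|p_i\xi\|^2\leq 1$). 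Each summand is positive and bounded above by the rank-one projection $\xi\otimes\xi$; since any positive operator dominated by a rank-one positive operator is a non-negative scalar multiple of it, $(p_i\xi)\otimes(p_i\xi)=\alpha_i\,\xi\otimes\xi$ with $\alpha_i\geq 0$. This forces $p_i\xi=c_i\xi$ for some scalar $c_i$, and since $p_i$ is a self-adjoint projection, $c_i\in\{0,1\}$. Taking traces (or equivalently matching the rank) in the displayed equation then yields $\sum_i c_i=1$, so \emph{exactly one} index $i\geq 1$, call it $\phi_j(\xi)$, satisfies $p_{\phi_j(\xi)}\xi=\xi$; moreover $p_0\xi=0$.

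Next I would show that $\phi_j(\xi)$ does not depend on the unit vector $\xi\in R(q_j)$. Suppose $\xi,\eta\in R(q_j)\setminus\{0\}$ with $\phi_j(\xi)=i\neq i'=\phi_j(\eta)$; then $p_i\eta=0$ and $p_{i'}\xi=0$, so $p_i(\xi+\eta)=\xi$ and $p_{i'}(\xi+\eta)=\eta$, while $p_k(\xi+\eta)=0$ for $k\neq i,i'$. If $\xi+\eta\neq 0$, applying the previous paragraph to $(\xi+\eta)/\|\xi+\eta\|\in R(q_j)$ requires some $p_k$ to fix $\xi+\eta$, which is impossible; and if $\xi+\eta=0$ then $\phi_j(\eta)=\phi_j(-\xi)=\phi_j(\xi)$, contradiction. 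Hence $\phi_j$ is constant on $R(q_j)$, giving $q_j\leq p_{\phi(j)}$ for a well-defined map $\phi\colon\{1,\ldots,v\}\to\{1,\ldots,w\}$. By symmetry (interchanging $P$ and $Q$) there is $\psi\colon\{1,\ldots,w\}\to\{1,\ldots,v\}$ with $p_i\leq q_{\psi(i)}$. Composing gives $q_j\leq q_{\psi(\phi(j))}$; since the $q_k$'s are mutually orthogonal and nonzero, this forces $\psi(\phi(j))=j$ and $q_j=p_{\phi(j)}$, and likewise $\phi(\psi(i))=i$. Thus $\phi$ is a bijection, in particular $v=w$, and defining $\sigma:=\phi^{-1}$ on $\{1,\ldots,w\}$ with $\sigma(0):=0$ yields $p_i=q_{\sigma(i)}$ for $i\geq 1$; taking complements in $1$ gives $p_0=1-\sum_{i\geq 1}p_i=1-\sum_{j\geq 1}q_j=q_0=q_{\sigma(0)}$, completing the proof.

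The only delicate point I anticipate is the convergence bookkeeping when $w=\infty$: one must argue that the series $\sum_i(p_i\xi)\otimes(p_i\xi)$ converges in uniform norm (which follows from compactness and the definition of pinching operators recalled earlier) and that domination of each partial sum by $\xi\otimes\xi$ passes to each individual summand — both routine but worth stating cleanly.
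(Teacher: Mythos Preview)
Your forward argument is correct and is a genuinely different (and arguably cleaner) route than the paper's. The paper first proves that the two families commute, by plugging the finite-rank approximants $e_n\nearrow p_i$ into $P=Q$ to get $q_jp_i=p_iq_j$ for all $i,j\geq 0$; it then picks $\xi\in R(p_i)$, finds $j$ with $q_j\xi\neq 0$, and tests $P=Q$ on the rank-one operators $\eta\otimes q_j\xi$ to force $R(p_i)=R(q_j)$. Your approach bypasses the commutativity step entirely: testing on $\xi\otimes\xi$ with $\xi\in R(q_j)$ and using that a positive operator dominated by a rank-one projection must be a scalar multiple of it immediately pins each $p_i\xi$ to $0$ or $\xi$. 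This is shorter and more self-contained; the paper's route, on the other hand, makes the mutual commutativity explicit, which is conceptually natural and reusable.

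One point where the paper is more careful than you: the converse when $w=\infty$. You call it ``immediate'' because $\{p_i\}=\{q_j\}$ as sets, but that presupposes the uniform-norm sum $\sum_{i\geq 1}p_ixp_i$ is insensitive to reordering, which is not completely free. The paper handles this by setting $e_k=\sum_{i=0}^k p_i$, noting $\|(1-e_k)x\|\to 0$ since $x$ is compact, and comparing the two series via this truncation. Your convergence caveat at the end addresses only the forward direction; you should add a line (e.g.\ both rearranged series converge in norm and agree in the strong operator topology, hence coincide) to close the converse as well.
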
 
\begin{proof}
We first suppose that $P=Q$. This is equivalent to 
\begin{equation}\label{suma} \sum_{i=1}^w p_ixp_i=\sum_{j=1}^v q_j x q_j , \end{equation}
for all $x \in \I$. If $\rank(p_i)< \infty$, $i\geq 1$, we set $x=p_i$ to get $\sum_{j=1}^v q_j p_i q_j=p_i$. Then it follows that $q_j p_i=q_jp_iq_j=p_i q_j$ for all $j\geq 1$. If $\rank(p_i)= \infty$, we use the same idea with a sequence of projections $(e_n)_n$ such that $e_n \leq p_i$, $e_n \nearrow p_i$, to find that $q_je_n=e_nq_j$, which  implies that $q_jp_i=p_iq_j$. Since $p_0=1-\sum_{i=1}^w p_i$ and $q_0=1-\sum_{i=1}^v q_j$, we can conclude that $q_jp_i=p_iq_j$ for all $i,j \geq 0$.

Now we claim that for each $i\geq 0$, we can find a unique $\sigma(i)$ such that $p_i = q_{\sigma(i)}$. To this end, let $\xi \in R(p_i)$, $\xi \neq 0$, and note that $p_i\xi=\xi=\sum_{j=0}^vq_j \xi$. This implies that there is some $j:=\sigma(i)$ such that $q_j \xi \neq 0$. Then we see that $q_j \xi=q_j p_i \xi =p_i q_j \xi$. Now let $\eta \in R(p_i)$  and insert $x=\eta \otimes q_j \xi$ in equation (\ref{suma}). In case $i>0$ we find that $\eta \otimes q_j \xi= (q_j \eta)\otimes q_j \xi$. If $j=0$, then $\eta \otimes q_j \xi=0$. In particular, if we take $\eta=q_j \xi \neq 0$, we obtain a contradiction. Hence we must have $j>0$, so the equation $\eta \otimes q_j \xi= (q_j \eta)\otimes q_j \xi$ implies that $q_j \eta =\eta$. Since $\eta$ is arbitrary, we have $R(p_i)\subseteq R( q_j)$. In a similar way,  we may choose $\eta \in R(p_j)$ to obtain that $R(q_j)\subseteq R( p_i)$. Thus  $p_i=q_j$.

In case $i=0$, we need to show that $p_0=q_0$. Suppose that there exists some $j>0$ such that $q_j \xi \neq 0$. By the preceding paragraph we know that $q_j \xi \in R(p_0)$. Then we insert $x=(q_j \xi) \otimes q_j \xi$ in equation (\ref{suma}) to find that $0=(q_j \xi) \otimes q_j \xi$, and hence  $q_j \xi=0$, a contradiction. Thus we obtain that $\xi=\sum_{j=0}^vq_j \xi=q_0 \xi$, and consequently, $R(p_0)\subseteq R(q_0)$. Interchanging $p_0$ and $q_0$, we can conclude that $p_0=q_0$.
 Since $\{ \, q_j  \,\}_0^v$ is a mutually orthogonal family, $\sigma(i)$ is unique and our claim is proved.

In other words, we have proved the existence of  a map $\sigma:\{  \, 0, \ldots , w \} \to \{  \, 0, \ldots , v \}$ satisfying $p_i=q_{\sigma(i)}$ and $\sigma(0)=0$. Repeating the previous argument with $q_j$ in place of $p_i$, we can construct another map $\psi:\{  \, 0, \ldots , v \} \to \{  \, 0, \ldots , w \}$ such that  $q_j=p_{\psi(j)}$ and $\psi(0)=0$. But $p_i=q_{\sigma(i)}=p_{(\psi \sigma)(i)}$ and $q_j=p_{\psi(j)}=q_{(\sigma\psi)(j)}$, so we have that $\sigma\psi=\psi\sigma=1$. Hence,  $\sigma$ is a permutation and $w=v$.

In order to prove the converse, let $\sigma$ a permutation of $\{ \, 0, \ldots , w \, \}$, $P$ be the pinching operator associated with a family $\{  \, p_i  \, \}_1 ^w$ and $Q$ be the pinching operator associated with $\{  \, p_{\sigma(i)}  \, \}_1 ^w$. Since the case $w < \infty$ is trivial, we suppose $w=\infty$.
Set $e_k=\sum_{i=0}^kp_i$. For each $x \in \I$, since $x$ is compact, we find that $\|(1-e_k)x\|\to 0$. Note that for $k \geq 1$, $$\sum_{i=1}^{\infty}p_{\sigma(i)}e_kxp_{\sigma(i)}=\sum_{i=1}^kp_ixp_i=\sum_{i=1}^{\infty}p_ie_kxp_i .$$ 
Then we get
\[  \bigg\| \sum_{i=1}^{\infty} p_{\sigma(i)}xp_{\sigma(i)} - \sum_{i=1}^{\infty}p_ixp_i \bigg\| = 
\bigg\| \sum_{i=1}^{\infty} p_{\sigma(i)}(1-e_k)xp_{\sigma(i)} - \sum_{i=1}^{\infty}p_i(1-e_k)xp_i \bigg\| \leq 2 \|(1-e_k)x\| \to 0, \] 
which proves that $P=Q$. 
\end{proof}

\noi Let $P$ be  the pinching operator associated with a family $\{ \, p_i \, \}_1 ^w$ $(1\leq w \leq \infty)$. Let $F$ be the set of all the permutations $\sigma$ of $\{ \, 0, \ldots, w \,  \}$ such that $\sigma(i)= i$ for all but finitely many $i \geq 0$. Note that the definition of the set $F$ becomes unnecessary if $w < \infty$.
 We will need to consider permutations of a finite number of  finite dimensional blocks  with the same dimension such that fix zero, i.e. 
\[  \mathcal{F}:=\{  \, \sigma  \in F \, : \,  \sigma(0)=0, \, \rank(p_i)=\rank(p_{\sigma(i)})< \infty  \text{ if $\sigma(i) \neq i$}   \,  \}. \]
Let $( \xi_{i,j(i)})$    be an  orthonormal basis of $\h$ such that $( \xi_{i,j(i)})_{j(i)=1, \ldots, \rank(p_i)}$ is a basis of $R(p_i)$,  where $i=0, \ldots ,w$. 
For each $\sigma \in \mathcal{F}$, we define the following permutation block operator matrix:
\[    r_{\sigma} (\xi_{i,j(i)}):=\xi_{\sigma(i), j(\sigma(i))}, \, \, \, i=0, \ldots ,w, \, j(i)=1, \ldots , \rank(p_i). \]
Note that $\rank(r_{\sigma} -1)<\infty$, since $\sigma \in \mathcal{F}$. Hence, it follows that $r_{\sigma} \in \UI$ for any symmetrically-normed ideal $\I$.

\begin{example}
A simple example takes place when $\h=\CC^n$, $\rank(p_i)=1$ and $\sum_{i=1}^np_i=1$. Here the set of all the matrices of the form $r_{\sigma}$, $\sigma \in \mathcal{F}$, reduces to all the $n \times n$ permutation matrices.   According to our next result, $H$ has exactly $n!$ connected components in this example.
\end{example}

\noi Recall that from the proof of Proposition \ref{est homog} we know that the isotropy group $G$ at $P$ corresponding to the action given by the left  representation can be characterized as block diagonal unitary operators, i.e.
\[ G=\bigg\{ \, u \in \UI \, : \, \sum_{i=0}^wp_iup_i=u \,   \bigg\},  \] 
where $P$ is the pinching operator associated with a family $\{ \, p_i \, \}_1 ^w$.

\begin{lemma}\label{connected comp}
Let $H$ be the isotropy group defined in  (\ref {characterization}). Then,
\[ H=\bigcup_{\sigma \in \mathcal{F}} r_{\sigma} G,  \]
where  each set in the union is a connected component of $H$.
\end{lemma}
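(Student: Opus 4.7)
The plan is to first establish the set equality $H = \bigcup_{\sigma \in \mathcal{F}} r_{\sigma} G$, and then to identify each coset as a connected component.

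Starting from $Ad_u P Ad_{u^*}(x) = \sum_{i=1}^{w} (u p_i u^*)\, x\, (u p_i u^*)$, the operator $Ad_u P Ad_{u^*}$ is the pinching operator associated with the family $\{u p_i u^*\}_1^w$, so Lemma \ref{pinching equality} translates $u \in H$ into the existence of a permutation $\sigma$ of $\{0, \ldots, w\}$ with $\sigma(0) = 0$ and $u p_i u^* = p_{\sigma(i)}$ for all $i$. The next step is to show that such a $\sigma$ necessarily lies in $\mathcal{F}$. If $\sigma(i) \neq i$ for some $i \geq 1$, then $p_{\sigma(i)} - p_i = u p_i u^* - p_i$ is compact (since $u-1 \in \I \subseteq \k$), while the orthogonality identity $(p_{\sigma(i)} - p_i)^2 = p_{\sigma(i)} + p_i$ forces the right-hand side to be a finite rank projection, so both $p_i$ and $p_{\sigma(i)}$ have finite rank. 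Moreover, if $\sigma$ moved infinitely many indices, choosing unit vectors $\xi_k \in R(p_{i_k})$ with $\sigma(i_k) \neq i_k$ yields an orthonormal sequence on which $\|(u-1)\xi_k\| = \sqrt{2}$ (since $u\xi_k \in R(p_{\sigma(i_k)}) \perp R(p_{i_k})$), contradicting compactness of $u-1$. Hence $\sigma \in \mathcal{F}$. For such $\sigma$, $r_\sigma - 1$ has finite rank so $r_\sigma \in \UI$ for any symmetrically-normed ideal, and $r_\sigma p_i r_\sigma^* = p_{\sigma(i)}$ by construction. Writing $u = r_\sigma(r_\sigma^{-1}u)$, a direct computation yields $(r_\sigma^{-1}u)\, p_i\, (r_\sigma^{-1}u)^* = r_\sigma^{-1} p_{\sigma(i)} r_\sigma = p_i$, so $r_\sigma^{-1} u$ commutes with every $p_i$ and lies in $G$ by the block-diagonal description from Proposition \ref{est homog}. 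The reverse inclusion $r_\sigma G \subseteq H$ is immediate.

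For the second statement I would combine two ingredients. On one hand, $G$ is path-connected: it can be viewed as a restricted product of the connected unitary groups $\mathcal{U}_{\mathfrak{I}}$ acting on the blocks $R(p_i)$, $i \geq 0$. On the other hand, $G$ is open in $H$: if $u \in H$ has decomposition $u = r_\sigma g$ with $\sigma \neq \mathrm{id}$, pick $i \geq 1$ with $\sigma(i) \neq i$ and a unit $\xi \in R(p_i)$; since $u\xi \in R(p_{\sigma(i)})$ is orthogonal to $\xi$, one has $\|(u-1)\xi\| = \sqrt{2}$, and from $\|\cdot\|_{\I} \geq \|\cdot\|$ this gives $\|u-1\|_{\I} \geq \sqrt{2}$. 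Thus the $\I$-ball of radius $\sqrt{2}$ around $1$ in $H$ is contained in $G$, so $G$ is open, and being a subgroup each coset $r_\sigma G$ is clopen in $H$. Being the homeomorphic image of the connected set $G$ under left multiplication by $r_\sigma$, it is a connected component.

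The main obstacle I anticipate is the path-connectedness of $G$: while each unitary group $\mathcal{U}_{\mathfrak{I}}$ acting on a single block $R(p_i)$ is connected via the exponential map from the skew-hermitian part of $\I$ (with a small adjustment when $-1$ lies in the spectrum of a block), assembling these block-by-block paths into a single continuous curve inside $G$ that respects the global $\I$-perturbation condition on the direct sum requires care, and I would reduce it by approximating $u \in G$ by block-diagonal elements with finitely many nontrivial blocks before applying the functional calculus.
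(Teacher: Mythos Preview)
Your proof is correct and follows essentially the same route as the paper's: both invoke Lemma~\ref{pinching equality} to obtain the permutation $\sigma$, argue that $\sigma\in\mathcal{F}$ from $u-1\in\I$ (you via compactness of $(p_{\sigma(i)}-p_i)^2=p_{\sigma(i)}+p_i$, the paper via the block pattern $p_jup_i=\delta_{j,\sigma(i)}p_{\sigma(i)}u$), factor $u=r_\sigma(r_{\sigma^{-1}}u)$ with the second factor in $G$, and separate distinct cosets by a uniform lower bound on $\|r_\sigma u - r_{\sigma'}v\|$. The only substantive difference is that the paper disposes of the connectedness of $G$ in one line by citing the well-known connectedness of $\UI$ (applied implicitly blockwise), whereas you correctly single this out as the step that demands the most care; your proposed reduction to finitely many nontrivial blocks is a sound way to fill that gap.
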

\begin{proof}
Let $u \in \UI$ such that $Ad_u P Ad_{u^*} =P$. According to Lemma \ref{pinching equality} it follows that 
$up_iu^*=p_{\sigma(i)}$ for some $\sigma$ permutation of $\{ \, 0, \ldots ,w   \, \}$ such that $\sigma(0)=0$. In particular, note that $p_jup_i=\delta_{j,\sigma(i)}\,p_{\sigma(i)}u$, which actually says that $u$ has only one nonzero block in each row. Since  $u -1 \in \I$, we get that $\sigma \in \mathcal{F}$. Hence we can write $u=r_{\sigma} r_{\sigma^{-1}}u$, where $r_{\sigma^{-1}}u \in G$.

To prove the other inclusion it suffices to note that $r_{\sigma}up_iu^*r_{\sigma^{-1}}=r_{\sigma}p_ir_{\sigma^{-1}}=p_{\sigma(i)}$ for any $u \in G$. Then we apply again Lemma \ref{pinching equality} to obtain that $Ad_u P Ad_{u^*}=P$. 

In order to establish the last assertion about the connected components of $H$, we  remark that
\[    \| r_{\sigma}u - r_{\sigma'}v \|_{\I} \geq  \| r_{\sigma}u - r_{\sigma'}v \| \geq 1,  \] 
whenever $\sigma \neq \sigma'$ and $u,v \in G$. This implies that the distance between any pair of sets that appear in the union is greater than one. On the other hand, it is a well known fact that $\UI$ is connected, then so does $r_{\sigma}G$. Hence the lemma is proved.   
\end{proof}

\begin{remarknf}
As a consequence of Lemma \ref{connected comp}, $H$ is a Banach-Lie subgroup of $\UI$. Indeed, the connected components  of $H$ are diffeomorphic to the Banach-Lie subgroup $G$ of $\UI$. 

Hence it follows that $\o_{\I}(P)\simeq \UI /H$ has a manifold structure endowed with the quotient topology.
\end{remarknf}

\medskip

\begin{theorem}\label{covering map1}
Let $\Phi$ a symmetric norming function, and $\I=\mathfrak{S}_{\Phi}$.  Let $P$ be the pinching operator associated with a family $\{  p_i \}_1 ^{w}$. 
 If $\I \neq \k$ assume in addition that $w < \infty$ and  there is only one infinite rank projection in the family $\{ \,  p_i \,\}_0 ^w$.
Then the map
\[ \Pi: \UI(P) \longrightarrow \o_{\I}(P), \, \, \, \, \, \, \, \, \Pi(L_uPL_{u^*})=Ad_u P Ad_{u^*}\, , \]
is a covering map, when $\UI(P)$ is considered with the topology inherited from $\BI$ and  $\o_{\I}(P)$ with the quotient topology.
\end{theorem}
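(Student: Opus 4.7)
The plan is to identify $\Pi$ with the canonical projection of homogeneous spaces $\UI/G \to \UI/H$ and derive the covering property from the fact that $G$ is an open subgroup of $H$ with discrete quotient $H/G$ in natural bijection with $\mathcal{F}$.

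First I would check that $\Pi$ is well defined and continuous. If $L_uPL_{u^*} = L_vPL_{v^*}$, then $v^*u \in G$, so by Lemma \ref{calc isotropia} we have $v^*u = \sum_{i=0}^{w} p_i(v^*u)p_i$, whence $(v^*u)p_j = p_j(v^*u)p_j = p_j(v^*u)$ for every $j \geq 0$, and therefore $(v^*u)p_j(v^*u)^* = p_j$. This gives $Ad_{v^*u}P Ad_{(v^*u)^*} = P$, showing simultaneously that $G \subseteq H$ and that $\Pi$ is well defined. Continuity then follows from the standing hypothesis: by Theorem \ref{complemento} (when $\I \neq \k$) or Proposition \ref{topology ctoss} (when $\I = \k$), the topology inherited by $\UI(P)$ from $\BI$ coincides with the quotient topology $\UI/G$, and $\Pi$ is identified with the natural projection $\UI/G \to \UI/H$.

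Second, Lemma \ref{connected comp} gives the decomposition $H = \bigsqcup_{\sigma \in \mathcal{F}} r_\sigma G$ into connected components lying at pairwise $\|\cdot\|_{\I}$-distance at least $1$. In particular $G$ is open in $H$, so $H/G$ is discrete and in natural bijection with $\mathcal{F}$. Since $H$ is a Banach-Lie subgroup of $\UI$ (by the remark after Lemma \ref{connected comp}), the projection $\pi_H: \UI \to \o_{\I}(P) \simeq \UI/H$ admits continuous local cross sections in the sense of \cite[Theorem 8.19]{Up}. I would fix $Q_0 \in \o_{\I}(P)$, pick such a section $s: \mathcal{V} \to \UI$ with $Q_0 \in \mathcal{V}$, and for each $\sigma \in \mathcal{F}$ define
\[
\tilde{s}_\sigma : \mathcal{V} \longrightarrow \UI(P), \qquad \tilde{s}_\sigma(Q) := L_{s(Q)r_\sigma}\,P\,L_{(s(Q)r_\sigma)^*}.
\]
Each $\tilde{s}_\sigma$ is continuous and satisfies $\Pi \circ \tilde{s}_\sigma = \mathrm{id}_\mathcal{V}$.

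Third, I would establish that $\Pi^{-1}(\mathcal{V}) = \bigsqcup_{\sigma \in \mathcal{F}} \tilde{s}_\sigma(\mathcal{V})$ with each $\tilde{s}_\sigma$ a homeomorphism onto an open subset of $\UI(P)$. Disjointness is direct: if $\tilde{s}_\sigma(Q_1) = \tilde{s}_{\sigma'}(Q_2)$, applying $\Pi$ yields $Q_1 = Q_2$, hence $s(Q_1) = s(Q_2)$, so $L_{r_\sigma}PL_{r_\sigma^*} = L_{r_{\sigma'}}PL_{r_{\sigma'}^*}$, forcing $r_{\sigma'}^{-1}r_\sigma \in G$ and therefore $\sigma = \sigma'$ by Lemma \ref{connected comp}. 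The main obstacle I expect is showing that each $\tilde{s}_\sigma(\mathcal{V})$ is open in $\UI(P)$ with the topology inherited from $\BI$: this is precisely the point where the hypothesis of the theorem is crucial, since under that hypothesis the $\BI$-topology on $\UI(P)$ coincides with the quotient topology, so that the identification with the principal $G$-bundle $\UI \to \UI/G$ allows the standard fiber bundle argument to translate into a statement about the subspace topology. Granting this, $\Pi$ is a covering map with discrete fiber $\mathcal{F}$.
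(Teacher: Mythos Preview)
Your argument is correct and the acknowledged gap is easily filled: once you know the $\BI$-topology on $\UI(P)$ agrees with the quotient topology $\UI/G$, the map $\pi_G:\UI\to\UI/G$ is open (quotient by a group action), so the local trivialization $\pi_H^{-1}(\mathcal V)\cong \mathcal V\times H$ coming from your section $s$ descends under $\pi_G$ to a homeomorphism $\Pi^{-1}(\mathcal V)\cong \mathcal V\times (H/G)$; each sheet $\tilde s_\sigma(\mathcal V)$ is then the open slice $\mathcal V\times\{\sigma\}$.

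The paper's proof takes a different route. Instead of building the trivialization from a section of $\pi_H$, it lets $\mathcal F\simeq H/G$ act on $\UI(P)$ by $\sigma\cdot L_uPL_{u^*}=L_{ur_\sigma}PL_{r_{\sigma^{-1}}u^*}$, identifies $\Pi$ with the quotient map $\UI(P)\to\UI(P)/\mathcal F$, and proves directly that this action is properly discontinuous via the explicit estimate
\[
\|P-L_{r_\sigma}PL_{r_{\sigma^{-1}}}\|_{\BI}\ \geq\ \|(p_i-p_{\sigma(i)})(\xi\otimes\xi)p_i\|_{\I}=1 \qquad (\sigma(i)\neq i,\ \xi\in R(p_i),\ \|\xi\|=1),
\]
so the ball $\{\,Q:\|Q-P\|_{\BI}<1/2\,\}$ and its $\sigma$-translates are disjoint. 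Your principal-bundle argument is cleaner conceptually and works verbatim whenever $G$ is open in $H$; the paper's argument is more concrete, produces an explicit evenly covered neighborhood in the $\BI$-metric, and avoids appealing to the general bundle machinery.
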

\begin{proof}
In the case where $\I \neq \k$, under the above hypothesis on the family $\{ \,   p_i \, \}_1 ^w$, it was proved in Lemma \ref{topology} that the quotient topology coincides with the subspace topology  on $\UI(P)$. In case $\I=\k$ both topologies coincide without  additional hypothesis  by Proposition \ref{topology ctoss}.
On the other hand, by Lemma \ref{connected comp} the quotient $H/G$ is discrete, then  $H/G$ is homomorphic to $\mathcal{F}$. 
We  define an action of $\mathcal{F}$ on $\UI(P)$ given by $\sigma \cdot L_uPL_{u^*}=L_{ur_{\sigma}}P L_{r_{\sigma^{-1}}u^*}$.
Therefore we can make the following identifications:
\[   \UI(P) / \mathcal{F} \, \simeq  \,  \UI(P) / (H/G)  \,  \simeq (\UI /G)/(H/G)  \,  \simeq \UI / H  \, \simeq  \, \o_{\I}(P).                      \]
Thus we may think of $\Pi$ as the quotient  map $ \UI(P) \longrightarrow  \UI(P) / \mathcal{F}$. Hence to prove that $\Pi$ is a covering map, it suffices to show that $\mathcal{F}$ acts properly discontinuous on $\UI(P)$ (see \cite{Gr}). This means that for any $Q \in \UI(P)$, there is an open neighborhood $\mathcal{W}$ of $Q$ such that $\mathcal{W} \cap \sigma \cdot \mathcal{W} = \emptyset$ for all $\sigma \neq 1$. Clearly, there is no loss of generality if we prove this fact for $Q=P$. 
To this end, define the open neighborhood by
\[  \mathcal{W}:=\{ \, Q \in \UI(P)  \, : \, \| Q - P \|_{\BI} < 1/2 \,  \}.  \]
Suppose that $\mathcal{W} \cap \sigma \cdot \mathcal{W} \neq \emptyset$ for some $\sigma \neq 1$. Then there are $Q,\tilde{Q} \in \mathcal{W}$ such that $\tilde{Q}=\sigma\cdot Q$. If $Q=L_uPL_{u^*}$, then we have that $\tilde{Q}=L_{ur_{\sigma}}PL_{r_{\sigma^{-1}}u^*}$. The distance between $Q$ and $\tilde{Q}$ can be estimated as follows
\[  \| Q -\tilde{Q}    \|_{\BI} = \| P - L_{r_{\sigma}} P L_{r_{\sigma^{-1}}}  \|_{\BI} \geq \bigg\| \sum_{i=1}^w  (p_i-p_{\sigma(i)}) (\xi \otimes \xi) p_i  \bigg\|_{\I} = \| \xi \otimes \xi \|_{\I} =1, \] 
where $\xi \in R(p_i)$ is such that $\| \xi \|=1$ and $\sigma(i)\neq i$. But since $Q, \tilde{Q} \in \mathcal{W}$, it follows that  $\|Q- \tilde{Q}\|_{\BI} <1$, a contradiction. Hence the action is properly discontinuous, and the proof is complete.
\end{proof}




\section{A complete Finsler metric} 
\noi Let $\Gamma(t)$, $t \in [0,1]$, be a piecewise $C^1$ curve in $\UI$. One can measure the length of $\Gamma$ using the norm of the symmetrically-normed ideal, i.e.
\[L_{\UI} (\Gamma) =
	\d\int_{0}^{1} \|\dot{\Gamma}(t)\|_{\I} \, dt.
\]
Since the tangent space of $\UI$ at $u$ can be identified with $u \I_{sh}$ (or also with $\I_{sh}u$), the above length functional is well defined. There is  rectifiable distance on $\UIH$ defined in the standard fashion, namely
	\[d_{\UI} (u_{0}, u_{1}) =
	\inf \left\{
	L_{\I} \left(\Gamma\right): \ 
	\Gamma \subseteq \UIH, \ 
	\Gamma(0) = u_{0}, \ 
	\Gamma(1) = u_{1}\right\}.
\]
Let $P$ be the pinching operator associated with a family  $\{ \, p_i \,  \}_1 ^w$. Since $\UI(P)$ is a homogeneous space, it becomes natural to put a quotient metric on the tangent spaces. If $Q=L_uPL_{u^*}$ for some $u \in \UI$, then for  $[L_z , Q] \in (T\UI(P))_Q$ we set
\begin{equation}\nonumber
\| \, [L_z ,Q] \, \|_Q = \inf \{\, \| z + y \|_{\I} \, : \, y \in \I_{sh}, \, Ad_u P Ad_{u^*}(y)=y      \,   \}.
\end{equation}
Indeed, the norm on $(T\UI(P))_Q$ is the Banach quotient norm of $\I_{sh}$ by the Lie algebra of the isotropy group at $Q$. A
standard computation shows that this metric is invariant under the action.  We point out that this quotient Finsler metric was already used in  several homogeneous spaces. For instance, we refer the reader to    \cite{AL9,ALR}, where some features of this metric are developed.  

The quotient Finsler metric on $\UIP$ allow us to introduce another length functional, namely 
\[  L_{\UI(P)}(\gamma)=\int_0 ^1 \| \dot{\gamma}(t) \|_{\gamma},   \] 
where    $\gamma(t)$, $t \in [0,1]$,  is a continuous and  piecewise $C^1$ curve in $\UIP$.  Thus there is an associated rectifiable distance given by
\[  d_{\UIP}(Q_0 ,Q_1)=   \inf \{  \,    L_{\UI(P)}(\gamma)          \, : \, \gamma \subseteq \UI(P), \, \gamma(0)=Q_0 , \, \gamma(1)=Q_1               \,    \},            \] 
when the curves $\gamma$ considered are continuous and piecewise $C^1$.
The next result proves that the rectifiable distance in
$\UIP$ can be approximated by lifting curves to $\UIH$. It is borrowed and adapted from \cite{AL9}.  


\begin{lemma}\label{lemma5.4.1}
Let $Q_{0}, Q_1 \in \UIP$. Under the assumptions of Theorem \ref{covering map1},
\[d_{\UIP} (Q_{0}, Q_{1} ) =
\inf \left\{
L_{\UI} (\Gamma):\ 
\Gamma \subseteq \UIH, \ 
L_{\Gamma(0)}Q_{0} L_{\Gamma(0)^*} = Q_{0}, \, 
L_{\Gamma(1)}Q_{0} L_{\Gamma(1)^*}  = Q_{1}
\right\},
\]
where the curves $\Gamma$ considered are
continuous and piecewise $C^{1}$.
\end{lemma}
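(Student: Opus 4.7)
The plan is to prove two inequalities. For $d_{\UIP}(Q_0, Q_1) \le \inf L_{\UI}(\Gamma)$, take any admissible curve $\Gamma$ in $\UI$ and project it by setting $\gamma(t) = L_{\Gamma(t)} Q_0 L_{\Gamma(t)^*}$. This $\gamma$ is a continuous piecewise $C^1$ curve from $Q_0$ to $Q_1$ in $\UIP$, and differentiating while using $\Gamma(t) \Gamma(t)^* = 1$ yields $\dot\gamma(t) = [L_{z(t)}, \gamma(t)]$ with $z(t) = \dot\Gamma(t) \Gamma(t)^* \in \I_{sh}$. By the definition of the quotient Finsler norm and unitary invariance of $\|\cdot\|_\I$, $\|\dot\gamma(t)\|_{\gamma(t)} \le \|z(t)\|_\I = \|\dot\Gamma(t)\|_\I$, so $L_{\UIP}(\gamma) \le L_{\UI}(\Gamma)$; passing to infima gives this direction.

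For the reverse inequality, fix $\varepsilon > 0$ and choose a piecewise $C^1$ curve $\gamma$ from $Q_0$ to $Q_1$ with $L_{\UIP}(\gamma) < d_{\UIP}(Q_0, Q_1) + \varepsilon/2$. I would construct an admissible lift $\Gamma$ with $L_{\UI}(\Gamma) < L_{\UIP}(\gamma) + \varepsilon/2$. The essential tool is the existence of continuous local cross sections of $\pi : \UI \to \UIP$, $\pi(u) = L_u P L_{u^*}$, which holds under the hypotheses of Theorem \ref{covering map1} by Lemma \ref{topology} for $\I \neq \k$ and by Proposition \ref{topology ctoss} for $\I = \k$. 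Compactness of $[0,1]$ then provides a partition $0 = t_0 < \ldots < t_n = 1$ fine enough that each $\gamma|_{[t_{k-1}, t_k]}$ lies in the domain of some local section $\sigma_k$; set $\Gamma_k(t) = \sigma_k(\gamma(t))$. At each $t_k$, $\Gamma_k(t_k)$ and $\Gamma_{k+1}(t_k)$ lie in the same fiber of $\pi$ and so differ by right multiplication by a constant element of the isotropy $G$ from Proposition \ref{est homog}; successive right multiplications by such constants glue the pieces into a continuous piecewise $C^1$ curve without altering derivative $\I$-norms, and a final right multiplication by a fixed preimage of $Q_0$ enforces the boundary conditions.

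The main obstacle is bounding $L_{\UI}(\Gamma) \le L_{\UIP}(\gamma) + \varepsilon/2$, since the raw section lift has derivative $z_k(t) = \dot\Gamma_k(t) \Gamma_k(t)^*$ whose $\I$-norm generically exceeds the quotient norm $\|\dot\gamma(t)\|_{\gamma(t)} = \inf_{y \in \mathfrak{g}_{\gamma(t)}} \|z_k(t) + y\|_\I$. To close the gap I would refine each $\Gamma_k$ by right multiplication with $u_k(t) = \exp((t - t_{k-1}) y_k)$ for a suitable $y_k$ in the isotropy algebra $\mathfrak{g}$: this preserves the projection $\pi(\Gamma_k u_k) = \gamma$ (because $u_k(t) \in G$) and shifts the derivative to $z_k(t) + \Gamma_k(t) y_k \Gamma_k(t)^*$, with the correction lying in the isotropy algebra of $\gamma(t)$. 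Choosing $y_k$ so that $\|z_k(t_{k-1}) + \Gamma_k(t_{k-1}) y_k \Gamma_k(t_{k-1})^*\|_\I$ approximates the infimum defining $\|\dot\gamma(t_{k-1})\|_{\gamma(t_{k-1})}$ within $\varepsilon/(2n)$, and then using uniform continuity on a fine enough subinterval, the modified $\I$-norm stays within $O(\varepsilon/n)$ of $\|\dot\gamma(t)\|_{\gamma(t)}$ throughout $[t_{k-1}, t_k]$. After a second round of constant-element concatenation for continuity, summing over $k$ and combining with the first inequality yields the equality. The delicate step is precisely this uniform approximation, which the compactness of $[0,1]$ together with the continuity of the local sections makes possible.
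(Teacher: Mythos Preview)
Your easy inequality is exactly the paper's: project $\Gamma$ and use that the differential of $\pi$ is norm-decreasing. For the reverse inequality, however, you take a genuinely different route from the paper. The paper does \emph{not} lift $\gamma$ through local sections. Instead it chooses, at each partition node $t_i$, a tangent vector $x_i\in\I_{sh}$ nearly realizing the quotient norm $\|\dot\gamma(t_i)\|_{\gamma(t_i)}$, and builds the explicit piecewise one-parameter curve $\Gamma(t)=e^{(t-t_{k-1})x_{k-1}}\cdots e^{t_1 x_0}$. This $\Gamma$ does not project to $\gamma$; rather a mean-value estimate shows $\pi(\Gamma(1))$ lands $\BI$-close to $Q_1$, and only then is a continuous local section invoked once, to attach a short arc from $\Gamma(1)$ into the fiber over $Q_1$. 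Your scheme is the dual one: lift $\gamma$ exactly via sections and then correct the length by right-multiplying each piece by $\exp\big((t-t_{k-1})y_k\big)$ with $y_k\in\g$. Both strategies are sound; the paper's has the advantage that it never differentiates the section (continuity suffices), while yours keeps the projection exact throughout and is perhaps conceptually cleaner once the bookkeeping is done.

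One point in your write-up needs tightening. You invoke Lemma~\ref{topology} and Proposition~\ref{topology ctoss} for \emph{continuous} local sections $\sigma_k$, and then declare $\Gamma_k=\sigma_k\circ\gamma$ to be piecewise $C^1$. Continuity of $\sigma_k$ does not give that. What you actually need is that $\pi:\UI\to\UIP$ is a real analytic submersion (Proposition~\ref{est homog}), which furnishes \emph{smooth} local sections; under the hypotheses of Theorem~\ref{covering map1} the quotient and inherited topologies agree, so these smooth sections live on $\BI$-open sets as required. A second, smaller point: in your uniform-continuity step the constants $y_k$ depend on the partition, so to avoid circularity you should first fix a global piecewise-$C^1$ lift $\tilde\Gamma$, set $w(t)=\tilde\Gamma(t)^*\dot{\tilde\Gamma}(t)$, choose the mesh from the modulus of continuity of $w$, and only then pick $y_k\in\g$ with $\|w(t_{k-1})+y_k\|_\I$ close to the quotient norm; a direct computation gives $\|\dot\Gamma(t)\|_\I=\|w(t)+y_k\|_\I$ on each subinterval, and the estimate closes. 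With these two fixes your argument goes through.
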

\begin{proof}
Clearly it suffices to assume that $Q_0=P$. Let $\gamma(t) \in \UIP$ be a $C^{1}$ curve
joining $\gamma(0) = P$ and $\gamma(1) = Q_{1}$.
By Proposition \ref{est homog} the map
	\[ \pi : \UIH \rightarrow \UIP,
	\hspace{0.7cm}
\pi (u) = L_u P L_{u^*}
\]
is a submersion when $\UI(P)$ is endowed with the quotient topology, then
there exists a continuous piecewise smooth
curve $\Gamma$ in $\UIH$ such that
$\pi (\Gamma(t)) = \gamma(t)$, for all
$t \in [0,1]$.
From the  definition of the quotient Finsler metric, it is clear that the differential map of  $\pi$ at the identity given by
	\[\delta : \I_{sh} \rightarrow \left(T \UIP\right)_{P},
	\hspace{0.7cm}
\delta (z) = L_z P - P L_{z}
\]
is contractive. Moreover, since the action is isometric, the differential map of $\pi$ at any $u \in \UI$ has to be contractive. Using these facts we find that
	\[d_{\UIP}(P, Q_{1}) \leq
	L_{\UIP}(\pi(\Gamma)) \leq
	L_{\UI} (\Gamma).
\]
To complete the proof,
we must show that one can approximate
$L_{\UIP}(\gamma)$ with lengths of curves in
$\UIH$ joining the fibers of
$P$ and $Q_{1}$.
Fix $\epsilon >0$.
Let
$0 = t_{0} < t_{1} < \ldots <  t_{n} =1$
be a uniform partition of $\left[0,1\right]$
$\left(\Delta t_{i} = t_{i} - t_{i-1} =1 / n \right)$
such that the following hold:
\begin{enumerate}
	\item $\left\| \dot{\gamma} (s) - \dot{\gamma} (s') \right\|_{\BI} <
	\epsilon /4$
	if $s$, $s'$ lie in the same interval $\left[t_{i-1}, t_{i}\right]$.
	\item $\left|L \left(\gamma\right) - 
	\d\sum_{i=0}^{n-1} \left\|\dot{\gamma} \left(t_{i}\right) \right\|_{\gamma \left(t_{i}\right)}
	\Delta t_{i} \right| < \epsilon / 2 $.
\end{enumerate}
For each
$i=0, \ldots, n-1$, let
$x_{i} \in \I_{sh}$ be such that
	\[\delta_{\gamma\left(t_{i}\right)}\left(x_{i}\right) = \dot{\gamma}\left(t_{i}\right)
	\  \  \  \y \  \  \ 
	\left\|x_{i}\right\|_{\I} \leq
	\left\|\dot{\gamma} \left(t_{i}\right)\right\|_{\gamma \left(t_{i}\right)} +
	\epsilon / 2.
\]
Consider the following curve $\Gamma$ in $\UIH$:
	\[\Gamma(t) =
	\left\{
	\begin{array}
	[c]{ll}%
	e^{tx_{0}}                                       &     t \in \left[0,t_{1}\right), \\
	e^{\left(t-t_{1}\right)x_{1}} e^{t_{1}x_{0}}     &     t \in \left[t_{1},t_{2}\right),\\
	e^{\left(t-t_{2}\right)x_{2}} e^{\left(t_{2}-t_{1}\right)x_{1}} e^{t_{1}x_{0}}     &     t \in \left[t_{2},t_{3}\right),\\
	\ldots                                           & \ldots \\
	e^{\left(t-t_{n-1}\right)x_{n-1}} \ldots e^{\left(t_{2}-t_{1}\right)x_{1}} e^{t_{1}x_{0}}     &     t \in \left[t_{n-1},1\right].
	\end{array}
	\right.
\]
Clearly $\Gamma$ is continuous and piecewise smooth,
$\Gamma(0) =1$ and
	\[L_{\UI}(\Gamma)=
	\d\sum_{i=0}^{n-1} \left\|x_{i}\right\|_{\I} \Delta t_{i} \leq
	\left(\d\sum_{i=0}^{n-1}\left\|\dot{\gamma} \left(t_{i}\right)\right\|_{\gamma \left(t_{i}\right)} +
	\epsilon/2\right) \Delta t_{i} \leq
	L_{\UIP}(\gamma) + \epsilon.
\]
Let us show that
$\pi (\Gamma(1))$ lies close
to $Q_{1}$.
Indeed, first denote by
$\alpha (t) = \pi \left(e^{tx_{0}}\right) - \gamma(t)$,
then $\alpha(0) = 0$
and, using the mean value theorem in Banach spaces,
	\[\left\|\pi \left(e^{t_{1}x_{0}}\right)-\gamma\left(t_{1}\right)\right\|_{\BI} =
	\left\|\alpha \left(t_{1}\right) - \alpha(0)\right\|_{\BI} \leq
	\left\|\dot{\alpha}\left(s_{1}\right)\right\|_{\BI} \Delta t_{1},
\]
for some $s_{1} \in \left[0, t_{1}\right]$.
Explicity,
	\[\left\|\pi \left(e^{t_{1}x_{0}}\right) - \gamma\left(t_{1}\right)\right\|_{\BI} \leq
	\left\|L_{e^{s_{1}x_{0}}} \delta_{Q_{0}}\left(x_{0}\right) L_{e^{-s_{1}x_{0}}}-
	\dot{\gamma}\left(s_{1}\right)\right\|_{\BI} \Delta t_{1}.
\]
Note that
$\delta \left(x_{0}\right) = \dot{\gamma} (0)$,
and that
	\[\left\| L_{e^{s_{1}x_{0}}} \dot{\gamma}\left(0\right) L_{e^{-s_{1}x_{0}}}-
	\dot{\gamma}\left(s_{1}\right)\right\|_{\BI} \leq
	\left\|L_{e^{s_{1}x_{0}}} \dot{\gamma}\left(0\right) L_{e^{-s_{1}x_{0}}}-
	\dot{\gamma}\left(0\right)\right\|_{\BI}+
	\left\|\dot{\gamma}\left(0\right)-\dot{\gamma}\left(s_{1}\right)\right\|_{\BI}.
\]
The second summand is bounded by
$\epsilon /4$.
The first summand can be bounded as follows
	\[\left\|L_{e^{s_{1}x_{0}}} \dot{\gamma}\left(0\right) L_{e^{-s_{1}x_{0}}}-
	\dot{\gamma}\left(0\right)\right\|_{\BI} = 
	\left\|
	L_{e^{s_{1}x_{0}}} \dot{\gamma}\left(0\right) \left(L_{e^{-s_{1}x_{0}}} - I\right) +
	\left(L_{e^{s_{1}x_{0}}}-I\right)\dot{\gamma}\left(0\right)\right\|_{\BI} 
\]
	\[\leq 2 \left\|\dot{\gamma}\left(0\right)\right\|_{\BI}
	\left\|L_{e^{s_{1}x_{0}}} - I\right\|_{\BI}
	\leq 2 M \Delta t_{1},
\]
where
$M = \d\max_{t \in [0,1]} \left\|\dot{\gamma}(t)\right\|_{\BI}
.$
It follows that
	\[ \left\|\pi \left(e^{t_{1} x_{0}}\right) - \gamma \left(t_{1}\right)\right\|_{\BI} \leq
	\left(2 M \Delta t_{1} + \epsilon/4 \right) \Delta t_{1}.
\]
Next estimate
$\left\|\pi \left(e^{\left(t_2 - t_{1}\right) x_{1}}e^{t_{1} x_{0}}\right) - 
\gamma \left(t_{2}\right)\right\|_{\BI}$,
which by the triangle inequality is less or equal than
	\[\left\| L_{e^{\left(t_2 - t_{1}\right) x_{1}}e^{t_{1} x_{0}}} P
	L_{e^{-t_{1} x_{0}} e^{-\left(t_2 - t_{1}\right) x_{1}}} -
	L_{e^{\left(t_2 - t_{1}\right) x_{1}}} \gamma\left(t_1\right) 
	L_{e^{-\left(t_2 - t_{1}\right) x_{1}}}\right\|_{\BI}
\]
	\[+ \left\|
	L_{e^{\left(t_2 - t_{1}\right) x_{1}}} \gamma\left(t_1\right) L_{e^{-\left(t_2 - t_{1}\right) x_{1}}} -
	\gamma \left(t_2\right)\right\|_{\BI}.
\]
The first summand is
	\[\left\| L_{e^{\left(t_2 - t_{1}\right) x_{1}}e^{t_{1} x_{0}}} P
	L_{e^{-t_{1} x_{0}} e^{-\left(t_2 - t_{1}\right) x_{1}}} -
	L_{e^{\left(t_2 - t_{1}\right) x_{1}}}
	\gamma\left(t_1\right)
	L_{e^{-\left(t_2 - t_{1}\right) x_{1}}}\right\|_{\BI} =
\]
	\[= \left\|
	L_{e^{\left(t_2 - t_{1}\right) x_{1}}}
	\left(L_{e^{t_{1} x_{0}}}P L_{e^{-t_{1} x_{0}}} - \gamma \left(t_1\right)\right)
	L_{e^{-\left(t_2 - t_{1}\right) x_{1}}}\right\|_{\BI}=
\]
	\[= \left\| L_{e^{t_{1} x_{0}}} P L_{e^{-t_{1} x_{0}}} - \gamma \left(t_1\right) \right\|_{\BI} \leq
	\left(2M \Delta t_{1} + \epsilon/4 \right) \Delta t_{1}.
\]
The second can be treated analogously as the
first difference above,
	\[ \left\|
	L_{e^{\left(t_2 - t_{1}\right) x_{1}}} \gamma\left(t_1\right)
	L_{e^{-\left(t_2 - t_{1}\right) x_{1}}} -
	\gamma \left(t_2\right)\right\|_{\BI} \leq
	\left(2M \Delta t_{2} + \epsilon / 4 \right) \Delta t_{2}.
\]
Thus (using that
$\Delta t_{i} = 1/n$)
	\[ \left\|\pi \left(e^{\left(t_{2} - t_{1}\right) x_{1}}
	e^{t_{1} x_{0}} \right) -
	\gamma \left(t_{2}\right) \right\|_{\BI} \leq
	\left(2M/n + \epsilon /4 \right) 2/n.
\]
Inductively,
one obtains that
	\[\left\| \pi \left(\Gamma \left( t_{n-1}\right)\right) -
	\gamma \left(t_{n-1}\right)
	\right\|_{\BI} \leq
	2 \left( 2M/n + \epsilon/4\right) <
	\epsilon / 2
\]
choosing $n$ appropriately. According to Lemma \ref{topology}, when $\I \neq \k$,  or according to Proposition \ref{topology ctoss}, when $\I=\k$,   the map $\pi$ has continuous local cross sections. Then one can connect $\Gamma(t_{n-1})$ with the fiber of $Q_1$ with a curve of arbitrary small length.
\end{proof}

\noi In order to prove our next theorem, we need to
state the next lemma
(see \cite[p. 109]{takesaki}):

\begin{lemma}\label{lemma3.4.1}
Let $H$ be a metrizable topological group,
and $G$ be a closed subgroup.
If $d$ is a complete distance function on $H$
inducing the topology of $H$, and
if $d$ is invariant under the right translation by $G$,
i.e.,
$d\left(xg, yg\right) = d\left(x,y\right)$
for any $x$, $y$ $\in H$ and
$g \in G$,
then the left coset space
$H/G = \left\{x G : \  x \in H \right\}$
is a complete metric space under the metric
$\dot{d}$ given by
	\[\dot{d} \left(xG, yG\right) =
	\inf \left\{d\left(xg_1, yg_2\right): \ g_1,g_2 \in G \right\}.
\]
Moreover,
the distance $\dot{d}$ is a metric for the quotient topology.
\end{lemma}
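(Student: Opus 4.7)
\medskip
\noindent\textbf{Proof proposal.}
The plan is to first exploit the right-$G$-invariance of $d$ to reduce $\dot{d}$ to a one-parameter infimum: setting $g = g_{2} g_{1}^{-1} \in G$, one has $d(xg_{1}, yg_{2}) = d(x, yg)$, so
\[ \dot{d}(xG, yG) = \inf_{g \in G} d(x, yg). \]
With this simpler form in hand I will verify the metric axioms for $\dot{d}$. Symmetry and non-negativity are immediate. For the triangle inequality, given $\epsilon > 0$ I would pick $g_{1}, g_{2} \in G$ with $d(x, yg_{1}) < \dot{d}(xG, yG) + \epsilon/2$ and $d(y, zg_{2}) < \dot{d}(yG, zG) + \epsilon/2$, and then right-translate the second estimate by $g_{1}$ (using right-invariance) to get $d(yg_{1}, zg_{2}g_{1}) < \dot{d}(yG, zG) + \epsilon/2$; the triangle inequality of $d$ then yields $\dot{d}(xG, zG) \leq d(x, zg_{2}g_{1}) < \dot{d}(xG, yG) + \dot{d}(yG, zG) + \epsilon$. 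The delicate axiom is definiteness: if $\dot{d}(xG, yG) = 0$, take $g_{n} \in G$ with $d(x, yg_{n}) \to 0$, so $yg_{n} \to x$ in $(H, d)$; continuity of left multiplication by $y^{-1}$ gives $g_{n} \to y^{-1}x$, and since $G$ is closed, $y^{-1}x \in G$, i.e.\ $xG = yG$.

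Next I will show that $\dot{d}$ induces the quotient topology. The inequality $\dot{d}(\pi(x), \pi(y)) \leq d(x, y)$ shows that $\pi : H \to H/G$ is $1$-Lipschitz, hence $\dot{d}$-continuous, so every $\dot{d}$-open set is open in the quotient topology. For the reverse inclusion, the one-parameter form of $\dot{d}$ gives
\[ \pi^{-1}(B_{\dot{d}}(xG, r)) = B_{d}(x, r) \cdot G = \bigcup_{g \in G} B_{d}(x, r) \, g, \]
which is $d$-open as a union of translates; hence every $\dot{d}$-ball is quotient-open, and the two topologies coincide.

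For completeness, given a $\dot{d}$-Cauchy sequence $(x_{n}G)$, I would extract a subsequence with $\dot{d}(x_{n_{k}}G, x_{n_{k+1}}G) < 2^{-k}$ and lift it inductively to a $d$-Cauchy sequence in $H$: set $y_{1} = x_{n_{1}}$, and having chosen $y_{k} \in x_{n_{k}}G$, use the one-parameter form to pick $h_{k+1} \in G$ with $d(y_{k}, x_{n_{k+1}} h_{k+1}) < 2^{-k}$, then set $y_{k+1} = x_{n_{k+1}} h_{k+1}$. Since $\sum_{k} 2^{-k} < \infty$, the sequence $(y_{k})$ is $d$-Cauchy; by completeness of $d$ it converges to some $y \in H$, and continuity of $\pi$ yields $x_{n_{k}}G \to yG$ in $\dot{d}$. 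A Cauchy sequence with a convergent subsequence converges, so the whole sequence $(x_{n}G)$ tends to $yG$.

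The main obstacle I anticipate is the inductive lifting in the completeness step: the representative chosen at stage $k+1$ must be aligned with the one at stage $k$ so that the resulting sequence in $H$ is Cauchy. The right-invariance of $d$, through the one-parameter form of $\dot{d}$, is exactly what makes this alignment possible; without it, one would be forced to control two independent infima simultaneously and could not produce a coherent lift.
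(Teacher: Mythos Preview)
Your argument is correct and is the standard proof of this fact. Note, however, that the paper does not actually prove this lemma: it is stated with a citation to Takesaki \cite[p.~109]{takesaki} and used as a black box in the theorem that follows, so there is no ``paper's own proof'' to compare against. Your write-up is essentially the textbook argument one finds in that reference.
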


\noi We will make use of the former lemma with $H=\UI$ and $G$ the isotropy group at $P$. 

\begin{theorem}
Let $\Phi$ a symmetric norming function, and $\I=\mathfrak{S}_{\Phi}$. Let $P$ be the pinching operator associated with a family $\{  p_i \}_1 ^{w}$. 
 If $\I \neq \k$ assume in addition that $w < \infty$ and  there is only one infinite rank projection in the family $\{ \,  p_i \,\}_0 ^w$. 
Let $u,v \in \UI$, and let
	\[\dot{d}_{\UI}
	\left(L_{u} P L_{u^{\ast}}, L_{v} P L_{v^{\ast}}\right) =
	\inf\left\{d_{\UI}\left(uv_1, v v_{2}\right): \  v_{1}, v_2 \in G \right\}.
\]
Then,
$\dot{d}_{\UI} = d_{\UIP} $.
In particular,
$(\UIP, \  d_{\UIP})$
is a a complete metric space and $d_{\UIP}$
metricates the quotient topology.
\end{theorem}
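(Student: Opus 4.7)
The plan is to establish the equality $d_{\UIP} = \dot{d}_{\UI}$ by proving two opposite inequalities, and then to deduce both the completeness of $(\UIP, d_{\UIP})$ and the metrication of the quotient topology in one stroke by applying Lemma \ref{lemma3.4.1} with $H = \UI$ and $G$ the isotropy subgroup of $P$.

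For the bound $d_{\UIP} \leq \dot{d}_{\UI}$, I would fix $v_1, v_2 \in G$ and an arbitrary continuous piecewise $C^1$ curve $\Gamma$ in $\UI$ from $uv_1$ to $vv_2$. Since $v_1, v_2 \in G$, the composition $\pi \circ \Gamma$ is a curve in $\UIP$ joining $L_uPL_{u^*}$ to $L_vPL_{v^*}$, and the quotient definition of the Finsler norm makes the differential of $\pi$ contractive at every point (as already observed in the proof of Lemma \ref{lemma5.4.1}), so $L_{\UIP}(\pi \circ \Gamma) \leq L_{\UI}(\Gamma)$; infimising over $\Gamma$ and then over $v_1, v_2 \in G$ yields the claim. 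For the reverse bound, given $\epsilon > 0$, Lemma \ref{lemma5.4.1} supplies a curve $\Gamma$ in $\UI$ with $L_{\UI}(\Gamma) < d_{\UIP}(L_uPL_{u^*}, L_vPL_{v^*}) + \epsilon$ whose endpoints lie in the fibers of $\pi$ over $L_uPL_{u^*}$ and $L_vPL_{v^*}$. Since these fibers are precisely the right cosets $uG$ and $vG$, one can write $\Gamma(0) = uv_1$ and $\Gamma(1) = vv_2$ for some $v_1, v_2 \in G$, whence $d_{\UI}(uv_1, vv_2) \leq L_{\UI}(\Gamma) < d_{\UIP} + \epsilon$; taking infimum and letting $\epsilon \to 0$ gives $\dot{d}_{\UI} \leq d_{\UIP}$.

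To invoke Lemma \ref{lemma3.4.1} one must verify that $d_{\UI}$ is a complete distance on $\UI$ compatible with the topology induced by the ideal norm and right-invariant under $G$. Invariance is immediate from the bi-invariance of $\|\cdot\|_{\I}$ under unitary multiplication, which preserves the length of every $C^1$ curve. The fundamental theorem of calculus gives $d_{\UI}(u_1, u_2) \geq \|u_1 - u_2\|_{\I}$ for every piecewise $C^1$ curve; conversely, near the identity the exponential chart $u = e^z$ provides a path of length $\|z\|_{\I}$, and standard estimates for $\log$ inside the symmetrically-normed ideal give $\|z\|_{\I} \leq C \|u - 1\|_{\I}$ in a neighborhood of $1$. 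Hence $d_{\UI}$ and $\|\cdot\|_{\I}$ are locally equivalent on $\UI$, so they define the same topology, and the completeness of $\UI$ in $\|\cdot\|_{\I}$ transfers to $(\UI, d_{\UI})$. Lemma \ref{lemma3.4.1} then produces a complete metric $\dot{d}_{\UI}$ on $\UIP \simeq \UI/G$ metricating the quotient topology, which under the hypotheses of Theorem \ref{covering map1} coincides with the topology inherited from $\BI$ by Lemma \ref{topology} (or Proposition \ref{topology ctoss} when $\I = \k$).

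The main obstacle I anticipate is the local comparison between $d_{\UI}$ and the ideal-norm metric on $\UI$ that is needed to check the topological and completeness hypotheses of Lemma \ref{lemma3.4.1}; once this is in place, the rest is a direct combination of Lemma \ref{lemma5.4.1} with the quotient bookkeeping of the Takesaki lemma.
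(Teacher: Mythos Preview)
Your proposal is correct and follows essentially the same route as the paper: both prove the two inequalities between $d_{\UIP}$ and $\dot{d}_{\UI}$ via Lemma~\ref{lemma5.4.1} and the contractivity of $\pi$, and both conclude by applying Lemma~\ref{lemma3.4.1}. The only substantive difference is that the paper disposes of the completeness of $(\UI, d_{\UI})$ and the closedness of $G$ by citing \cite[Lemma 2.4]{C}, whereas you sketch a direct local comparison between $d_{\UI}$ and $\|\cdot\|_{\I}$ via the exponential chart; either approach is fine.
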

\begin{proof}
We begin by recalling  that $(\UI, d_{\UI})$ is a complete metric space and $G$ is $d_{\UI}$-closed in $\UIH$ (see e.g. \cite[Lemma 2.4]{C}). Thus the quotient distance
$\dot{d}_{\UI}$
is well defined.
Moreover,
since the multiplication by unitaries is isometric,
it can be computed as
	\[\dot{d}_{\UI}
	(L_{u} P L_{u^{\ast}}, L_{v} P L_{v^{\ast}}) =
	\inf\left\{d_{\UI}(u, v v_{1}): \  v_{1} \in G \right\}.
\]
To prove one inequality, fix
$\epsilon > 0$.
By Lemma \ref{lemma5.4.1} there is
a curve $\Gamma \in \UIH$
satisfying
\begin{enumerate}
	\item $\Gamma (0) = u$, $\Gamma (1)=v v_{1}$, with $v_{1} \in G$,
	\item $L_{\UI} (\Gamma) <
	d_{\UIP}\left( L_{u} P L_{u^{\ast}}, L_{v} P L_{v^{\ast}}  \right)+ \epsilon$.
\end{enumerate}
Then we have that
	\[\dot{d}_{\UI}
	(L_{u} P L_{u^{\ast}}, L_{v} P L_{v^{\ast}}) \leq
	d_{\UI} \left(u, v v_{1}\right) \leq
	L_{\UI} \left(\Gamma\right) <
	d_{\UIP} \left(L_{u} P L_{u^{\ast}}, L_{v} P L_{v^{\ast}}\right) + \epsilon.
\]
Since $\epsilon$ is arbitrary, we have proved the first inequality.
To show the reversed inequality,
note that given $\epsilon > 0$, there
exists $v_{1} \in G$ such that
	\[d_{\UI} (u, v v_{1}) < 
	\dot{d}_{\UI}
	(L_{u} P L_{u^{\ast}}, L_{v} P L_{v^{\ast}}) + \epsilon
\]
Then there exists a curve $\Gamma \subseteq \UIH$
such that
$\Gamma (0) = u$,
$\Gamma (1) = v v_{1}$ and
$L_{\I} \left(\Gamma\right) <
	d_{\I}\left(u, v v_{1}\right) + \epsilon.
$
So we have that
	\[d_{\UIP} (L_{u} P L_{u^*}, L_{v} P L_{v^*}) \leq
	L_{\UI} \left(\Gamma\right) <
	d_{\UI} \left(u, vv_{1}\right) + \epsilon <
	\dot{d}_{\UI}\left(L_{u} P L_{u^{\ast}}, L_{v} P L_{v^{\ast}}\right) + 2 \epsilon.
\]
We thus get $\dot{d}_{\UI} = d_{\UIP}$.
The completeness of
$(\UIP, \  d_{\UIP})$
and the fact that $d_{\UIP}$ defines
the quotient topology,  
follow   from
Lemma \ref{lemma3.4.1}.
\end{proof}

\section{Application to the unitary orbit of a compact normal operator}
 
 Let $a$ be a compact normal operator.  The question of when the full unitary orbit of $a$, i.e.
 \[  \U(a)= \{  \, uau^*   \,  : \, u \in \U   \,   \}, \]
has the property that the quotient topology coincides with the uniform norm topology was completely solved by L. A. Fialkow \cite{Fi}. Both topologies coincide if and only if $a$ has finite rank. In this section, we  address the same question but with respect to the $\UI$-unitary orbit of $a$, which is given by 
\[  \U_{\I}(a)=\{ \, uau^*  \, : \, u \in \UI  \, \}.    \] 
Though the $\UI$-unitary orbit is in general smaller than the full unitary orbit, both orbits are equal if $a$ has finite rank (see \cite[Lemma 2.7]{La}). Recall that for $u \in \UI$,  
$$uau^*=a  + a(u^*-1) + (u-1)au^* \in a + \I.$$  Thus one can  endow $\U_{\I} (a)$ with the topology inherited from the affine Banach space $a + \I$.  On the other hand, as a homogeneous space, $\UI (a)$ may also be endowed with the quotient topology. 

If $\I$ is ideal of the trace class operators, it was proved by P. Bon\'a \cite{Bo} that both topologies coincide when $a$  has  finite rank. Later this result was extended to any symmetrically-normed ideal   by  D. Belti\c t$\breve{\text{a}}$ and T. Ratiu in \cite[Theorem 5.10]{BR}, where they also showed that the $\UI$-unitary orbits are weakly Kh$\ddot{\text{a}}$ler homogeneous spaces. We will show the converse of this result and  we will  give a different proof of the already known implication by means of the previous results on the orbits of pinching operators. 

Our result is also related to the work by E. Andruchow, G. Larotonda and L. Recht \cite{AL9, ALR, La}, where  without the assumption of $a$ being compact, several equivalent conditions to the existence of a submanifold structure of the  $\UI$-unitary (or full unitary) orbits are described, when $\I$ is  the ideal of Hilbert-Schmidt or compact operators. In particular, they established sufficient conditions to ensure that both topologies coincide.  One of this conditions states that the spectrum of $a$ must be finite. Note that this gives again the sufficient condition, since if $a$ is compact, the spectrum of $a$ is finite if and only if $a$ has finite rank.

\medskip

\begin{remarknf}\label{relationship a and P}
The main idea to link unitary orbits of pinching operators  with the $\UI$-unitary orbit of a compact operator is the following. By the spectral theorem we may rewrite the compact normal operator $a$ as a uniform norm convergent series, namely
\begin{equation}\label{spectral}
  a=\sum_{i=1}^{w} \lambda_i p_i, 
\end{equation}  
where $1 \leq w \leq \infty$, $\lambda_i$ are the nonzero distinct eigenvalues of $a$ and $\{   \, p_i \, \}_1 ^w$ is a family of mutually orthogonal finite rank projections. Indeed, $p_i$ is the orthogonal projection onto $\ker(a-\lambda_i)$. Then we take $P$ to be  the pinching operator associated with $\{   \, p_i \, \}_1 ^w$. 

Let $u \in \UI$ such that $ua=au$. If we use the spectral decomposition of $a$, we see that $u$ must be block diagonal with respect to the family $\{   \, p_i \, \}_0 ^w$. This says that the isotropy group at $a$ coincides with the isotropy group at $P$, i.e.  
\[  \{  \, u \in \UI  \, : \, ua=au \, \}=\{  \, u \in \UI  \, : \, L_uP=PL_u \, \}=G.  \]
Hence it turns out that the quotient topology on $\U_{\I} (a)\simeq \UI /G$ is equal to the quotient topology on  $\UI(P)$.
 \end{remarknf}

\medskip

\begin{theorem}
Let $\Phi$ a symmetric norming function, and $\I=\mathfrak{S}_{\Phi}$. Let $a$ be a compact normal operator. Then the quotient topology on $\UI(a)$ coincides with the topology inherited from $a+\I$  if and only if $\rank(a) < \infty$.
\end{theorem}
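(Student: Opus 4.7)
The plan is to reduce both implications to the results already obtained for unitary orbits of pinching operators. By the spectral theorem, write $a = \sum_{i=1}^w \lambda_i p_i$ with each $p_i$ ($i \geq 1$) of finite rank, since $\lambda_i \neq 0$ and $a$ is compact; consequently $\rank(a) < \infty$ is equivalent to $w < \infty$. As noted in Remark \ref{relationship a and P}, the isotropy group at $a$ for the conjugation action equals the isotropy $G$ at $P$ for the left representation, so there is a canonical bijection $u a u^* \leftrightarrow L_u P L_{u^*}$ that identifies the quotient topology on $\UI(a)$ with the quotient topology on $\UI(P)$.

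For the ``if'' direction, assume $\rank(a) < \infty$, so $w < \infty$; since $\h$ is infinite-dimensional, $p_0 = 1 - \sum_{i=1}^w p_i$ is the unique infinite rank projection in $\{ p_i\}_0^w$. By Theorem \ref{complemento} (for $\I \neq \k$) and Proposition \ref{topology ctoss} (for $\I = \k$), the quotient topology on $\UI(P)$ coincides with the topology inherited from $\BI$. Through the bijection above, it remains to show that the $\BI$-topology on $\UI(P)$ matches the $\I$-norm topology on $\UI(a) \subseteq a + \I$. In one direction, since the eigenvalues $\lambda_1, \ldots, \lambda_w, 0$ are distinct, Lagrange interpolation produces polynomials $q_i$ with $q_i(a) = p_i$; then $u p_i u^* = q_i(u a u^*)$ is Lipschitz in $u a u^*$ in the uniform norm on the bounded set $\{u a u^* : u \in \UI\}$, and combining with $\| L_u P L_{u^*} - L_v P L_{v^*}\|_{\BI} \leq 2 \sum_{i=1}^w \| u p_i u^* - v p_i v^*\|$ and with the domination $\|\cdot\| \leq \|\cdot\|_\I$ produces $\| L_u P L_{u^*} - L_v P L_{v^*}\|_{\BI} \leq C\, \| u a u^* - v a v^*\|_\I$. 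In the other direction, $\| u a u^* - v a v^*\|_\I \leq \sum_{i=1}^w |\lambda_i|\, \| u p_i u^* - v p_i v^*\|_\I$, and Lemma \ref{cont fes} (applied to each $F_i$, translation-invariantly) bounds $\| u p_i u^* - v p_i v^*\|_\I$ by a constant multiple of $\| L_u P L_{u^*} - L_v P L_{v^*}\|_{\BI}$.

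For the ``only if'' direction, assume $\rank(a) = \infty$, so $w = \infty$. I will exhibit a sequence $v_n \in \UI$ converging in the $(a + \I)$-topology but not in the quotient topology. For each $n$, pick unit vectors $\xi_{2n-1} \in R(p_{2n-1})$ and $\xi_{2n} \in R(p_{2n})$, and let $v_n \in \UI$ be the rank-two self-adjoint unitary that swaps $\xi_{2n-1}$ and $\xi_{2n}$ and fixes their orthogonal complement. Using $a \xi_{2n-1} = \lambda_{2n-1} \xi_{2n-1}$ and $a \xi_{2n} = \lambda_{2n} \xi_{2n}$, a direct computation yields
\[
v_n a v_n^* - a = (\lambda_{2n} - \lambda_{2n-1}) \bigl( \xi_{2n-1} \otimes \xi_{2n-1} - \xi_{2n} \otimes \xi_{2n}\bigr),
\]
whose $\I$-norm equals $|\lambda_{2n} - \lambda_{2n-1}|\, \Phi(1,1,0,\ldots)$, which tends to $0$ since $\lambda_i \to 0$ by compactness. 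Hence $v_n a v_n^* \to a$ in $a + \I$. On the other hand, $p_{2n} v_n p_{2n-1} = \xi_{2n} \otimes \xi_{2n-1}$, while every $g \in G$ is block-diagonal with respect to $\{p_i\}_0^w$, so $\| v_n - g\|_\I \geq \| p_{2n}(v_n - g) p_{2n-1}\|_\I = 1$ for all $g \in G$. Therefore the image of $v_n$ in $\UI/G$ stays at distance at least $1$ from the class of the identity, so it does not converge in the quotient topology, forcing the two topologies on $\UI(a)$ to differ.

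The main obstacle I anticipate is the forward direction, specifically confirming that the bijection $\UI(a) \to \UI(P)$ is a homeomorphism between the two ambient topologies; the difficulty lies not in any single estimate but in assembling the Lagrange-polynomial Lipschitz bound with Lemma \ref{cont fes} correctly. The converse, by contrast, is a clean swap construction in the spirit of Remark \ref{sequence prop}, with compactness of $a$ (i.e.\ $\lambda_i \to 0$) providing precisely the decay in $\I$-norm needed to overcome the fact that the projections $p_i$ themselves do not decay.
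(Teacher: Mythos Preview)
Your proof is correct. The forward direction is essentially the paper's argument: the paper uses the direct block estimate $\|p_i u_n p_j\|_\I \leq |\lambda_i - \lambda_j|^{-1}\|u_n a - a u_n\|_\I$ to pass from $\I$-convergence of $u_n a u_n^*$ to $\BI$-convergence of $L_{u_n}PL_{u_n^*}$, while you recover $p_i = q_i(a)$ via Lagrange interpolation---these are equivalent ways of exploiting that the $\lambda_i$ are finitely many and distinct. Your use of Lemma~\ref{cont fes} for the reverse Lipschitz estimate is extra (the paper simply notes that the quotient topology is always stronger than the inherited one), but harmless.

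Your converse is genuinely different and cleaner than the paper's. The paper splits into two cases: for $\I \neq \k$ it invokes the sequence of Remark~\ref{sequence prop} (operators with $\|z_k\|\to 0$ but $\|z_k\|_\I = 1$) together with the chart description of the quotient; for $\I = \k$ it uses a swap unitary and contradicts the continuity of the map $\Lambda:\UK(a)\to\UK(P)$. Your single swap construction works uniformly for every symmetrically-normed ideal, because you bound $\inf_{g\in G}\|v_n - g\|_\I$ directly from below via the off-diagonal block $p_{2n}v_n p_{2n-1} = \xi_{2n}\otimes\xi_{2n-1}$, avoiding any appeal to charts or to the auxiliary map $\Lambda$. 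What you gain is a shorter, case-free argument; what the paper's approach buys is a tighter connection to the dichotomy already established for $\UIP$ (Remark~\ref{sequence prop} and Proposition~\ref{topology ctoss}), at the cost of treating $\k$ separately.
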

\begin{proof}
Suppose that $\rank(a) < \infty$. This is equivalent to state that $w< \infty$ in the spectral decomposition of $a$ given by equation (\ref{spectral}). Under this assumption the family $\{ \, p_i \, \}_0 ^w$ has only one projection of infinite rank, namely $p_0= 1 - \sum_{i=1}^w p_i$. Indeed, note that $p_0$ is  the orthogonal
 projection onto $\ker(a)$. According to Proposition \ref{topology} when $\I \neq \k$, or Proposition \ref{topology ctoss} when $\I=\k$, the quotient topology coincides with the topology inherited from $\BI$ on $\UI(P)$.  

Since the quotient topology on $\UI(a)$ is always stronger than the topology inherited from $a + \I$, it remains to prove that any sequence
$(u_n)_n$  in $\UI$ satisfying $\| u_n a u_n^* -a \|_{\I} \to 0$ has to be convergent to $a$ in the quotient topology. To this end, note that
\[    \| p_iu_n p_j  \|_{\I}  \leq |\lambda_i - \lambda _j|^{-1} \| u_n a  -a u_n \|_{\I} \to 0,  \]
for all $i,j \geq 0$ and $i\neq j$ (where we set $\lambda_0=0$) . Now let $x \in \I$ such that $\|x\|_{\I}=1$.  Since 
\[  \bigg\|  \sum_{i=1} ^w  (u_n p_i -p_i u_n) x p_i \bigg\|_{\I} \leq \sum_{i=1} ^w \|u_n p_i -p_i u_n\|_{\I} \leq 2 \sum_{i\neq j}  \|p_ju_n p_i\|_{\I},    \]  
we see that
\[   \|L_{u_n}PL_{u_n ^*} - P\|_{\BI} = \|L_{u_n}P - PL_{u_n}\|_{\BI}\leq 2 \sum_{i\neq j}  \|p_ju_n p_i\|_{\I} \to 0. \]
By the remarks in the first paragraph of this proof and Remark \ref{relationship a and P}, the latter is equivalent to say that $u_nau_n^* \to a$ in the quotient topology.

\smallskip

In order to prove the converse we assume that the quotient topology on $\UI(a)$ coincides with the topology inherited from $a+\I$. We need to consider two cases. In the first case we suppose that $\I \neq \k$. Let $\mathcal{M}$ be the supplement of the Lie algebra of $G$ defined in Proposition \ref{est homog}. If $\rank(a)=\infty$, we can construct a sequence $(z_k)_k$ in  $\mathcal{M}$  such that $\|z_k\|\to 0$ and $\|z_k\|_{\I} =1$ (see Remark \ref{sequence prop}). 

Given $\epsilon >0$, let $M \geq 1$ such that $\|\sum_{i=M+1}^w \lambda_i p_i \| \leq \epsilon$. Then it follows that
\begin{align*}
\| e^{z_k} a e^{-z_k} - a \|_{\I} & = \| (e^{z_k} -1) a  - a(e^{z_k} -1) \|_{\I} \\ 
& \leq 2 \bigg( \, \| e^{z_k} -1 \| \bigg\| \sum_{i=1}^M \lambda_i p_i \bigg\|_{\I} + 
\|e^{z_k} -1\|_{\I} \bigg\|\sum_{i=M+1}^w p_i \bigg\| \, \, \bigg) \\
& \leq 2 \bigg( \, \| e^{z_k} -1 \| \bigg\| \sum_{i=1}^M \lambda_i p_i \bigg\|_{\I} + 
e \,\epsilon \, \bigg).  
\end{align*}
Letting $k\to \infty$, we find that $e^{z_k} a e^{-z_k} \to a$ in the norm $\|  \, \cdot \, \|_{\I}$, or equivalently, in the quotient topology. 
By the same argument used at the beginning of Lemma \ref{topology} we can arrive at $\|z_k\|_{\I} \to 0$, a contradiction with our previous choice of $(z_k)_k$. 

Now we turn to the case where $\I=\k$. Under the assumption that both topologies coincide on $\UK(a)$ we claim that the map       
\[   \Lambda:\UK(a) \longrightarrow \UK(P), \, \, \, \, \, \,     \Lambda(uau^*)=L_uPL_{u^*},  \]
is continuous, when one endows $\UK(a)$ with the topology inherited from $ \k$ and $\UK(P)$ with the topology inherited from $\BK$. In fact, by Proposition \ref{topology ctoss} the quotient and the inherited topologies always coincide on $\UK(P)$. Then the map  $\Lambda$ turns out to be  the identity map of $\UK /G$, and thus our claim follows.

Again we suppose that $\rank(a)=\infty$. We will find a contradiction with the fact that $\Lambda$ is continuous. Note that there must be an infinite number of finite rank projections in the family $\{ \, p_i \, \}_1 ^w$ and the eigenvalues of $a$ satisfy $\lambda_i \to 0$.    Let $( \xi_{i,j(i)})$    be an  orthonormal basis of $\h$ such that $( \xi_{i,j(i)})_{j(i)=1, \ldots, \rank(p_i)}$ is a basis of $R(p_i)$ for all $i\geq 1$.
Then take the following sequence of unitary operators:
\[   u_n= \xi_{n+2 ,1} \otimes \xi_{n+1,1} + \xi_{n+1 ,1} \otimes \xi_{n+2,1} + e_n , \] 
where $e_n$ is the orthogonal projection onto $\{ \, \xi_{n+1,1} \, , \, \xi_{n+2 ,1} \, \}^{\perp}$. Note that $u_n - 1$ has finite rank, then $u_n \in \UK$.
Thus we get
\begin{align*}
 \|u_nau_n^* - a \| & =\| u_n a -au_n \| \\ 
 & =  \| (\lambda_{n+1} - \lambda_{n+2}) \, (\xi_{n+2 ,1} \otimes \xi_{n+1,1}) - (\lambda_{n+2} - \lambda_{n+1}) ( \xi_{n+1 ,1} \otimes \xi_{n+2,1} \, )  \| \\
 & \leq 2 | \lambda_{n+1} - \lambda_{n+2} | \to 0.
\end{align*}
On the other hand, note that
\begin{align*}
\| L_{u_n}PL_{u_n^*} -P \|_{\BK} & = \sup_{\|x\|=1 \, , \, x \in \k} \bigg\|  \sum_{i=1}^{\infty}(u_np_i -p_i u_n) x p_i \bigg\| \\
& \geq \bigg\|  \sum_{i=1}^{\infty}(u_np_i -p_i u_n)  p_i \bigg\| \\
& = \| \xi_{n+2 ,1} \otimes \xi_{n+1,1}\, + \, \xi_{n+1 ,1} \otimes \xi_{n+2,1}  \|= 1,
\end{align*} 
which contradicts the continuity of $\Lambda$. Hence $a$ must have finite rank, and the theorem is proved.
\end{proof}

\vspace{1.2cm}

\noi
\begin{tabular}[h]{ll}
			 Eduardo Chiumiento &  Mar\'ia Eugenia Di Iorio y Lucero \\
			 Departamento de Matem\'atica, FCE-UNLP & Instituto de Ciencias, UNGS\\
			 Calles 50 y 115 & J. M. Gutierrez 1150\\
			 (1900) La Plata, Argentina & (1613) Los Polvorines\\
			  \textit{e-mail}: \texttt{eduardo@mate.unlp.edu.ar} &  Buenos Aires, Argentina\\
			   & \textit{e-mail}: \texttt{mdiiorio@ungs.edu.ar}\\
			  & \\
			  & \\
			  E. Chiumiento and M. Di Iorio y Lucero &\\
        Instituto Argentino de Matem\'atica& \\
        ``Alberto P. Calder\'on'', CONICET & \\
        Saavedra 15  Piso 3& \\
                (1083) Buenos Aires, Argentina & \\
\end{tabular}


\begin{thebibliography}{XX}

\bibitem{AL8} E. Andruchow, G. Larotonda, {\sc Hopf-Rinow Theorem in the Sato Grassmanian}, J. Funct. Anal. 255 (2008), no. 7, 1692-1712.

\bibitem{AL9} E. Andruchow, G. Larotonda, {\sc The rectifiable distance in the unitary Fredholm group}, Studia Math. 196 (2010), 151-178.


\bibitem{ALR} E. Andruchow, G. Larotonda, L. Recht, {\sc Finsler geometry and actions of the p-Schatten unitary groups}, Trans. Amer. Math. Soc. 362 (2010), 319-344.


\bibitem{AS} E. Andruchow, D. Stojanoff, {\sc Geometry of conditional expectations and finite index}, Intern. J. of Math. 5 (1994), no. 2, 169-178.

\bibitem{B} D. Belti\c t$\breve{\text{a}}$, {\sc Smooth homogeneous structures in operator theory}, Chapman and Hall/CRC, Monographs and Surveys in Pure and Applied Mathematics 137, 2006.



\bibitem{BR} D. Belti\c t$\breve{\text{a}}$, T. Ratiu, {\sc Symplectic leaves in real banach Lie–Poisson spaces}, Geom. Funct. Anal. 15 (2005), no. 4, 753-779.



\bibitem{BR07b} D. Belti\c t$\breve{\text{a}}$, T. Ratiu, A. Tumpach,{\sc The restricted Grassmannian, Banach Lie-Poisson spaces and coadjoint orbits}, J.  Funct. Anal. 247 (2007), no. 1, 138-168.



\bibitem{Bh} R.Bhatia, {\sc Pinching, trimming, truncating, and averaging of matrices}, Amer. Math. Monthly
107 (2000), 602-608.


\bibitem{Bo} P. B\'ona,  {\sc Some considerations on topologies of infinite
dimensional unitary coadjoint orbits},  J. Geom. Phys. 51 (2004), no. 2, 256-268.


\bibitem{Bour} N. Bourbaki, {\sc Vari\'et\'es diff\'erentielles et analytiques. Fascicule de r\'esultats. Paragraphes 1 \`a 7}, Hermann, Paris.

\bibitem{Cal} J. Calkin, {\sc Two-sided ideals and congruences in the ring of bounded operators in Hilbert space}, Ann. of Math. 42 (1941), no. 2, 839-873.


\bibitem{C} E. Chiumiento, {\sc Metric geometry in infinite dimensional Stiefel manifolds}, Differential Geom. Appl.  28 (2010), no. 2, 469-479.



\bibitem{con} J. B. Conway, {\sc The compact operators are not complemented in $\B$}, Proc. Amer. Math. Soc. 32 (1972), 549-550.

\bibitem{cpr} G. Corach, H. Porta,  L. Recht, {\sc Differential geometry of systems of
projections in Banach algebras}, Pacific J.  Math. 143 (1990), 209-228.


\bibitem{cpr0} G. Corach, H. Porta,  L. Recht, {\sc The geometry of spaces of projections in $C^*$-algebras}, Adv.  Math. 101 (1993), no. 1, 59-77.


\bibitem{Fi} L. A. Fialkow, {\sc A note on unitary cross sections for operators}, Canad. J. Math. 30 (1978), 1215-1227.


\bibitem{GK} I. C. Gohberg,  M. G. Krein, {\sc Introduction to the theory of linear non-self-adjoint operators}, Amer. Math. Soc., Providence, R.I., 1960. 

\bibitem{Gr} M. Greenberg, {\sc Lectures on algebraic topology}, Benjamin, New York, 1967.

\bibitem{La} G. Larotonda, {\sc Unitary Orbits in a Full Matrix Algebra}, Integral Equations Operator Theory 54 (2006), no. 4, 511-523. 


\bibitem{RO} A. Odzijewicz, T. Ratiu, {\sc Banach Lie-Poisson spaces and reduction}, Commun. Math. Phys. 243 (2003), no. 1, 1-54. 



\bibitem{ps} A. Pressley, G. Segal, {\sc Loop Groups}, Oxford Mathematical Monographs. Oxford Science Publications. The Clarendon Press, Oxford University Press, New York, 1986. 


\bibitem{S} B. Simon, {\sc Trace Ideals and Their Applications}, London Mathematical Society Lecture Note Series, vol. 35, Cambridge University Press, Cambridge, 1979.


\bibitem{takesaki} M. Takesaki, {\sc Theory of operator algebras III}. Springer-Verlag, 1979.

\bibitem{Up} H. Upmeier, {\sc Symmetric Banach Manifolds and Jordan $C^*$-Algebras}, North-Holland Math. Stud. 104, Notas de Matem\'atica 96, North-Holland, Amsterdam,
1985.

\bibitem{vN} J. von Neumann, {\sc Mathematical Foundations of Quantum Mechanics}, Princeton Univ. Press, Princeton, N.J., 1955.

\bibitem{w} R. Whitley, {\sc Projecting $m$ onto $c_0$}, Amer. Math. Monthly 73 (1966), 285-286.



\end{thebibliography}
\end{document}